
%

\documentclass[12pt]{amsart} 
\usepackage{amssymb,amscd,amsxtra,mathrsfs, mathtools}
\usepackage[margin=3cm]{geometry} 
\usepackage{tikz} 
\usepackage{color} 
\usepackage{graphicx}
\usepackage{hyperref} 
\usepackage[all]{xy} 
\numberwithin{equation}{section} 
\tolerance=1000 
 
\newtheorem{theorem}{Theorem} 
\newtheorem{lemma}[theorem]{Lemma} 
\newtheorem{proposition}[theorem]{Proposition} 
\newtheorem{corollary}[theorem]{Corollary} 
\newtheorem{conjecture}[theorem]{Conjecture} 
\newtheorem{claim}[theorem]{Claim} 

\theoremstyle{definition}
\newtheorem{definition}[theorem]{Definition} 
\newtheorem{notation}[theorem]{Notation} 
\newtheorem{remark}[theorem]{Remark} 
\newtheorem{example}[theorem]{Example} 

\usepackage{xspace} 

\newcommand{\C}{\mathbb{C}}
\newcommand{\Z}{\mathbb{Z}} 
\newcommand{\N}{\mathbb{N}}
\newcommand{\Q}{\mathbb{Q}} 
\newcommand{\R}{\mathbb{R}}
\newcommand{\A}{\mathbb{A}} 
\newcommand{\bS}{\mathbb{S}} 
\newcommand{\hbS}{\widehat{\bS}}
\newcommand{\tbS}{\widetilde{\bS}} 
\newcommand{\PP}{\mathbb{P}} 

\newcommand{\cO}{\mathcal{O}} 
\newcommand{\cS}{\mathcal{S}} 
\newcommand{\hcS}{\widehat{\cS}}
\newcommand{\cE}{\mathcal{E}} 
\newcommand{\cN}{\mathcal{N}}
\newcommand{\cL}{\mathcal{L}} 

\newcommand{\cA}{\mathcal{A}} 
\newcommand{\cH}{\mathcal{H}} 
\newcommand{\cI}{\mathcal{I}} 
\newcommand{\cM}{\mathcal{M}}

\newcommand{\cF}{\mathcal{F}} 

\newcommand{\hD}{{\widehat{D}}}

\newcommand{\homega}{\hat{\omega}} 
\newcommand{\hT}{{\widehat{T}}} 
\newcommand{\htau}{\hat{\tau}}
\newcommand{\hS}{{\widehat{S}}}

\newcommand{\hGamma}{\widehat{\Gamma}} 

\newcommand{\ovbeta}{\overline{\beta}} 
\newcommand{\ovxi}{\overline{\xi}} 

\newcommand{\tX}{{\widetilde{X}}}
\newcommand{\td}{{\tilde{d}}}
\newcommand{\tQ}{{\widetilde{Q}}}

\newcommand{\ttau}{{\tilde{\tau}}}

\newcommand{\bt}{\mathbf{t}}
\newcommand{\bbf}{\mathbf{f}}

\newcommand{\sfq}{\mathsf{q}}

\newcommand{\frs}{\mathfrak{s}} 
 
\newcommand{\frM}{\mathfrak{M}}

\newcommand{\scrI}{\mathscr{I}} 

\newcommand{\NE}{\operatorname{NE}} 
\newcommand{\ovNE}{\operatorname{\overline{NE}}}
\newcommand{\ovNEN}{\ovNE_{\N}}
\newcommand{\NEN}{\NE_{\N}}

\newcommand{\Spec}{\operatorname{Spec}} 

\newcommand{\id}{\operatorname{id}} 
\newcommand{\ev}{\operatorname{ev}} 
\newcommand{\pt}{{\operatorname{pt}}} 
\newcommand{\Lie}{\operatorname{Lie}} 
\newcommand{\Hom}{\operatorname{Hom}} 
\newcommand{\End}{\operatorname{End}} 
\newcommand{\rank}{\operatorname{rank}}
\newcommand{\ch}{\operatorname{ch}} 
\newcommand{\QDM}{\operatorname{QDM}} 
\newcommand{\Amp}{\operatorname{Amp}}

\newcommand{\Res}{\operatorname{Res}} 
\newcommand{\JKRes}{\operatorname{JKRes}}

\newcommand{\QH}{\operatorname{QH}} 
\newcommand{\vol}{\operatorname{vol}}

\newcommand{\iu}{\mathtt{i}}

\def\corr#1{\left\langle#1 \right\rangle} 
\def\parfrac#1#2{\frac{\partial #1}{\partial #2}} 

\begin{document}
\title[Fourier analysis of equivariant quantum cohomology]{Fourier Analysis of Equivariant Quantum Cohomology} 
\author[H. Iritani]{Hiroshi Iritani}
\address{Department of Mathematics, Graduate School of Science, \\ 
Kyoto University, \\ 
Kitashirakawa-Oiwake-cho, Sakyo-ku, Kyoto, 606-8502, Japan \\ 
{\tt iritani@math.kyoto-u.ac.jp}}

\begin{abstract}
Equivariant quantum cohomology possesses the structure of a difference module by shift operators (Seidel representation) of equivariant parameters. Teleman's conjecture \cite{Teleman:gauge_mirror, Pomerleano-Teleman:announcement} suggests that shift operators and equivariant parameters acting on $\QH_T(X)$ should be identified, respectively, with the Novikov variables and the quantum connection of the GIT quotient $X/\!/T$. This can be interpreted as a form of Fourier duality between equivariant quantum cohomology ($D$-module) of $X$ and quantum cohomology ($D$-module) of the GIT quotient $X/\!/T$. 

We introduce the notion of ``quantum volume,'' derived from Givental's path integral  \cite{Givental:homological} over the Floer fundamental cycle, and present a conjectural Fourier duality relationship between the $T$-equivariant quantum volume of $X$ and the quantum volume of $X/\!/T$. 
We also explore the ``reduction conjecture,'' developed in collaboration with Fumihiko Sanda, which expresses the $I$-function of $X/\!/T$ as a discrete Fourier transform of the equivariant $J$-function of $X$.  
Furthermore, we demonstrate how to use Fourier analysis of equivariant quantum cohomology to observe toric mirror symmetry and prove a decomposition of quantum cohomology $D$-modules of projective bundles or blowups.
\end{abstract}


\maketitle 

\section{Overviews and heuristic arguments}
In this section, without giving rigorous definitions, we give overviews and heuristics for Fourier transformation in equivariant quantum cohomology. For a symplectic manifold $X$ with Hamiltonian $T$-action, 
it is well-known that the \emph{equivariant} symplectic volume of $X$ is related to the symplectic volume of the reduction $X/\!/T$ by Fourier transformation. 
We introduce the notion of \emph{quantum volumes}\footnote{Essentially the same concept, under the same name, was introduced by Cassia-Longhi-Zabzine \cite{CLZ:symplectic_cuts} in the context of gauged linear sigma models. See Remark \ref{rem:quantum_volume}.} for symplectic manifolds and demonstrate, through several examples, that (equivariant or non-equivariant) quantum volumes are related by Fourier transformation under symplectic reductions. The Fourier duality also explains many aspects of toric mirror symmetry. At the end of this section, we calculate shift operators for vector spaces in a heuristic way and demonstrate the relation to the Gelfand-Kapranov-Zelevinsky (GKZ) system. 
\subsection{Introduction} 
Let $T= (S^1)^l$ be a real torus and let $T_\C = (\C^\times)^l$ be its complexification.  
Let $X$ be a compact symplectic manifold equipped with a Hamiltonian $T$-action, or a smooth projective variety equipped with an algebraic $T_\C$-action. 
In his ICM article \cite{Teleman:gauge_mirror}, Teleman conjectured a relationship between the equivariant quantum cohomology $\QH_T^*(X)$ of $X$ and the quantum cohomology $\QH^*(X/\!/T)$ of the symplectic (or GIT) quotient $X/\!/T$, in terms of the Seidel representation. In this article, we interpret his conjecture as a Fourier transformation between \emph{difference} equations and \emph{differential} equations: 
\begin{align} 
\label{eq:FD}
\underset{\text{difference equation}}{\QH_T^*(X)} & \quad \underset{\rm FT}{\longleftrightarrow} \quad \underset{\text{differential equation}}{\QH^*(X/\!/T)} 
\end{align}
Recall that quantum cohomology $\QH^*(X)$ is additively isomorphic to\footnote{The description here is slightly imprecise, see \S\ref{sec:2nd_lecture} for a precise definition.} $H^*(X) \otimes \C[\![Q_1,\dots,Q_r]\!]$, where $Q_i$'s are the Novikov variables corresponding to a basis of $H_2(X)$. 
The (equivariant or non-equivariant) quantum cohomology is equipped with the quantum connection ``$z \nabla_{Q_k\partial_{Q_k}}$'' with respect to the Novikov variable $Q_k$. It is flat and defines a $D$-module structure on quantum cohomology, called the \emph{quantum $D$-module}. 
In the equivariant case, quantum cohomology is further equipped with the \emph{shift operators} (or \emph{Seidel operators}) $\bS_i$ of $T$-equivariant parameters $\lambda_i$ ($i=1,\dots,l$). We have the commutation relation 
\[
[\lambda_i,\bS_j] = z \delta_{i,j}\bS_j, \quad \text{or equivalently,} \quad \bS_j \circ \lambda_i = (\lambda_i -z\delta_{i,j}) \circ \bS_j 
\] 
and the operator $\bS_j$ shifts the equivariant parameter $\lambda_i$ by $-z \delta_{i,j}$. These shift operators commute each other and define a difference-module structure on equivariant quantum cohomology with respect to $\lambda_i$.  

Under the Fourier duality \eqref{eq:FD}, we expect that the shift operators $\bS_i$ on $\QH_T^*(X)$ correspond to the Novikov variables $q_i$ of $\QH^*(X/\!/T)$, which is roughly speaking related to the stability parameter $(t_1,\dots,t_l)\in \Lie(T)^*$ of the $T$-action by $q_i= e^{-t_i}$, and that the equivariant parameters $\lambda_i$ for $\QH_T^*(X)$ correspond to the quantum connection $z\nabla_{q_i\partial_{q_i}}$ on $\QH^*(X/\!/T)$ with respect to $q_i$. See Table \ref{tab:expected_FT}. Note that the commutation relation $[\lambda_i, \bS_j] = z \delta_{i,j}\bS_j$ corresponds to $[z \nabla_{q_i\partial_{q_i}}, q_j] = z \delta_{i,j}q_j$. 
\begin{table}[t] 
\renewcommand{\arraystretch}{1.2}  
\centering 
\caption{Expected Fourier duality} 
\label{tab:expected_FT}
\begin{tabular}{rcl} 
$\QH_T(X)$ &  & $\QH(X/\!/T)$ \\ \hline 
quantum connection $z\nabla_{Q_k\partial_{Q_k}}$ & $\longleftrightarrow$ & 
quantum connection $z\nabla_{Q_k\partial_{Q_k}}$ \\ 
shift operators $\bS_i$ & $\longleftrightarrow$ &other Novikov variables $q_i = e^{-t_i}$ \\ 
&  & associated with the stability \\
& &  parameters $t_i$ \\ 
equivariant parameter $\lambda_j$ & $\longleftrightarrow$ 
& quantum connection $z \nabla_{q_j\partial_{q_j}}$
\end{tabular} 
\end{table} 

Through the lens of Fourier duality, we uncover the following: 
\begin{itemize} 
\item toric mirror symmetry (GKZ system, Landau-Ginzburg mirrors);  
\item construction of a \emph{global mirror} for the GIT quotients $X/\!/T$; the equivariant quantum cohomology $\QH_T^*(X)$ form a sheaf over a ``global K\"ahler moduli space'' and $\QH(X/\!/_i T)$ ($i=1,2,3,\dots$) appears at its cusps (see Figure \ref{fig:Kaehler_moduli}); 
\item decomposition of quantum cohomology $D$-modules, for projective bundles and blowups. 
\end{itemize} 

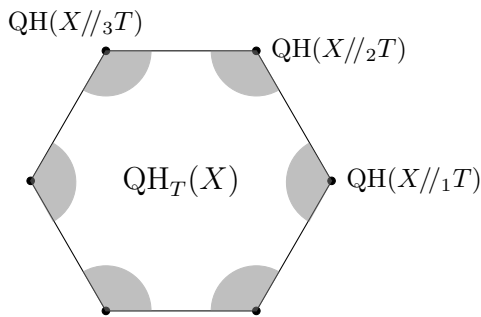
\begin{figure}[t] 
\centering 
\begin{tikzpicture}
\filldraw[gray, shading = radial, opacity=0.1] (-2,0)-- (-1,1.73) -- (1,1.73) -- (2,0) -- (1,-1.73) -- (-1,-1.73) -- (-2,0); 
\filldraw (2,0) circle [radius = 0.03]; 
\filldraw (1,1.73) circle [radius =0.03]; 
\filldraw (-1,1.73) circle [radius =0.03]; 
\filldraw (-2,0) circle [radius =0.03]; 
\filldraw (-1,-1.73) circle [radius =0.03]; 
\filldraw (1,-1.73) circle [radius =0.03]; 
\draw (0,0) node {$\QH_T(X)$}; 
\draw (2.1,1.73) node {\footnotesize $\QH(X/\!/_{2} T)$}; 
\draw (3.1,0) node {\footnotesize $\QH(X/\!/_1 T)$}; 
\draw (-1.4,2.1) node {\footnotesize $\QH(X/\!/_3 T)$}; 
\draw (-3.3,0) node {\phantom{$\QH(X/\!/_4T)$}}; 
\filldraw[shade, right color =gray, left color =white, opacity=0.5, gray] (2,0) -- (1.7,0.5) arc [radius=0.6, start angle=120, end angle = 240] -- cycle;  
\filldraw[shade, top color =gray, bottom color =white, opacity=0.5, gray] (1,1.73) -- (0.4,1.73) arc [radius=0.6, start angle = 180, end angle =300] -- cycle; 
\filldraw[shade, top color =gray, bottom color =white, opacity =0.5, gray] (-1,1.73) -- (-0.4,1.73) arc [radius =0.6, start angle = 0, end angle = -120] -- cycle; 
\filldraw[shade, left color =gray, right color =white, opacity =0.5, gray] (-2,0) -- (-1.7,0.5) arc [radius=0.6, start angle =60, end angle = -60] -- cycle; 
\filldraw[shade, bottom color =gray, top color =white, opacity =0.5, gray] (-1,-1.73) -- (-0.4,-1.73) arc [radius =0.6, start angle =0, end angle =120] -- cycle; 
\filldraw[shade, bottom color =gray, top color =white, opacity =0.5, gray] (1,-1.73) -- (0.4,-1.73) arc [radius =0.6, start angle =180, end angle =60] -- cycle; 
\end{tikzpicture} 
\caption{Global K\"ahler moduli space. We expect that the sheaf $\QH_T(X)$ over the global K\"ahler moduli space should restrict to $\QH(X/\!/_i T)$ around each cusp (shaded region) after modification and completion.}
\label{fig:Kaehler_moduli}
\end{figure}

\begin{remark} 
In Table \ref{tab:expected_FT}, the Novikov variables $Q_k$ correspond to a basis of $H_2(X)$ and the variables $q_i$ correspond to a basis of the cocharacter lattice $\Hom(S^1,T)$. We implicitly assumed that $\dim H_2(X/\!/T) = \dim H_2(X) + \rank T$ in Table \ref{tab:expected_FT}. In general, we have $\dim H_2(X/\!/T) \le \dim H_2(X) + \rank T$ (by Kirwan surjectivity) and the inequality can be strict in general (e.g.~it is strict in the blowup example in \S\ref{subsec:blowup}). 
\end{remark} 

\begin{remark} 
Shift operators were first introduced by Okounkov-Pandharipande \cite{Okounkov-Pandharipande:Hilbert} (see also \cite{BMO:Springer,Maulik-Okounkov}). They give a lift of the Seidel representation \cite{Seidel:pi1} on quantum cohomology to quantum $D$-modules. See \cite{GMP:nil-Hecke} for shift operators associated with a non-abelian group action. 
\end{remark} 

\begin{remark} 
In the case where a torus $T$ acts on a Fano manifold $X$, Teleman \cite{Teleman:gauge_mirror, Pomerleano-Teleman:announcement} formulated his conjecture in the following way. Consider the GIT quotient $X/\!/T$ with respect to the polarization $c_1^T(TX)$. The conjecture says that $\QH^*(X/\!/T)$ should be identified with the fibre $\bS_1= \cdots = \bS_l = 1$ of $\QH_T^*(X)$ (i.e.~the quotient $\QH^*_T(X)/(\bS_i-1: i=1,\dots,l)$). Note that, in Teleman's formulation, quantum cohomology is defined over the universal Novikov ring. The proof of this conjecture has been announced by Pomerleano-Teleman \cite{Pomerleano-Teleman:announcement} (including the case where $T$ is replaced with a compact Lie group $G$): it relies on a Floer-theoretic method using symplectic cohomology.  
\end{remark} 

\begin{remark} 
Woodward \cite{Woodward:qKirwan} studied the relationship between $\QH_T(X)$ and $\QH(X/\!/T)$ using the moduli space of affine gauged maps and relate them by the \emph{quantum Kirwan map}. Our reduction conjecture  (see \S\ref{subsec:reduction_conjecture}) gives rise to a map analogous to his quantum Kirwan map.  
\end{remark} 

\begin{remark} 
Regarding Teleman's conjecture, an approach using Fukaya categories and Lagrangian correspondences between them can be considered. For a symplectic manifold $X$ with a Hamiltonian $T$-action and its symplectic reduction $X/\!/T$, there is a Lagrangian correspondence $\mu^{-1}(t) \subset X\times X/\!/T$ given by the moment map level, and this is expected to give a functor between Fukaya categories. Since quantum cohomology should arise as the Hochschild homology of the Fukaya category, this should induce a relationship between quantum cohomology. For related work, we refer the reader to \cite{Fukaya:Lagcor, LLL:Lagcor-Teleman}.
\end{remark}

\subsection{Fourier transformation of classical volumes} 

We review how the Fourier transformation of (classical) symplectic volumes arises in the context of symplectic quotients. 

Recall that the equivariant cohomology of a $T$-space $X$ is defined to be the cohomology of the Borel construction $X_T$ 
\[
H^*_T(X) := H^*(X_T) 
\]
where $X_T$ is the quotient of $X \times ET$ by the diagonal $T$-action 
\[
X_T := (X\times ET)/T 
\]
with $ET$ being a contractible space with free $T$-action. The Borel construction for a point $(\pt)_T = ET/T =: BT$ is known as the classifying space of principal $T$-bundles. 
The Borel construction $X_T$ is an $X$-bundle over $BT$ 
\begin{equation} 
\label{eq:Borel_construction}
\begin{aligned}  
\xymatrix{X \ar@{^{(}->}[r] &  X_T \ar[d] \\ 
& BT }
\end{aligned} 
\end{equation} 
and hence $H^*_T(X)$ is a module over $H^*_T(\pt)$. 
\begin{example} 
For $T = S^1$, we have $ET = S^\infty \subset \C^\infty$ and $BT = \C\PP^\infty$. 
\end{example} 
\begin{notation} 
For $T=(S^1)^l$, we write $H_T^*(\pt) = H^*(BT) = \C[\lambda_1,\dots,\lambda_l]$. The generators $\lambda_i \in H^2_T(\pt)$ are called the \emph{equivariant parameters}. The $T$-equivariant cohomology of a (finite type) manifold is a module over $\C[\lambda] := \C[\lambda_1,\dots,\lambda_l]$ and hence is regarded as a (coherent) sheaf over $\C^l$. 
\end{notation} 

Suppose that $(X,\omega)$ is a compact\footnote{For simplicity, we assume that $X$ is compact. However, we expect that a similar story holds for non-compact spaces with a `nice' $T$-action, such as smooth semiprojective $T$-varieties that satisfy the assumptions in \cite[Section 2.1]{Iritani:shift}. In Examples  \ref{ex:classical_Cn}, \ref{exa:quantum_Cn}, \ref{exa:Givental_path_integral_direct_computation}, \ref{exa:Floer_cycle_Cn}, \ref{exa:GKZ} below, we explore the non-compact case with $X= \C^n$.} symplectic manifold with a Hamiltonian torus action. In this case, the Serre spectral sequence of the fibration \eqref{eq:Borel_construction} degenerates at the $E_2$ page (see \cite{Kirwan:coh_quotient}), and there is a \emph{non-canonical} isomorphism 
\[
H_T^*(X) \cong H^*(X) \otimes H^*_T(\pt) 
\]
as an $H^*_T(\pt)$-module. Let $\mu \colon X \to \Lie(T)^* \cong \R^l$ be the moment map for the $T$-action. 
We introduce the \emph{Duistermaat-Heckman form} $\homega$ as 
\begin{equation} 
\label{eq:DH} 
\homega := \omega - \lambda \cdot \mu 
\end{equation} 
where $\lambda \cdot \mu = \sum_{i=1}^l \lambda_i \mu_i$. 
This is an equivariantly closed 2-form in the Cartan model $(\Omega^*(X)^T[\lambda], d - \lambda \cdot \iota)$ of the equivariant cohomology, where $\lambda \cdot \iota = \sum_{i=1}^l \lambda_i \iota_i$ and $\iota_i$ is the contraction with respect to the fundamental vector field\footnote{In this paper, we require that the moment map $\mu$ satisfies $d \mu_i + \iota_i \omega =0$, $i=1,\dots,l$ and that the fundamental vector field of the $S^1$-action corresponds to the Lie algebra element that exponentiates to $1\in S^1$, i.e.~the element $2\pi \parfrac{}{\theta}\in T_1 S^1$ for the angle coordinate $\theta \mapsto e^{\iu\theta}$ of $S^1$.} of the $S^1$-action from the $i$th factor of $T= (S^1)^l$. Thus it defines an equivariant cohomology class $[\homega] \in H^2_T(X)$. 
We consider the equivariant integral 
\[
\int_X e^{\homega} = \int_X e^{-\lambda \cdot \mu} \frac{\omega^n}{n!} 
\]
which is defined to be the integral of the function $e^{-\lambda \mu}$ with respect to the symplectic volume form $e^\omega = \frac{\omega^n}{n!}$, where $n = \dim_\R X/2$ (here we regard $\lambda$ as complex numbers).  This is called the \emph{equivariant volume}. By the Fubini theorem, we can rewrite this integral in the form of Fourier transformation: 
\begin{align}
\label{eq:Fourier_trans_volume}
\begin{split}  
\int_X e^{\homega} & = \int_{t \in \R^l} e^{-\lambda \cdot t} \left( 
\int_{\mu^{-1}(t)} \frac{\omega^n/n!}{d\mu} \right) dt \\ 
& = \int_{t\in \R^l} e^{-\lambda \cdot t} \left( \int_{\mu^{-1}(t)/T} e^{\omega_{\rm red}} \right) dt.  
\end{split} 
\end{align} 
Here $\omega_{\rm red}$ is the reduced symplectic form on the symplectic reduction $X/\!/_t T := \mu^{-1}(t)/T$. 
This formula says that \emph{the equivariant volume of $X$ is related to the volume of $X/\!/_t T$ by Fourier transformation}. 
\begin{equation} 
\label{eq:classical_FT}
(\text{equivariant volume of $X$}) \quad \underset{\rm FT}{\longleftrightarrow} \quad 
(\text{volume of $X/\!/_t T$}) 
\end{equation} 
Note that the equivariant parameter $\lambda$ is Fourier dual to the stability parameter $t$. 

\begin{example}
\label{ex:classical_Cn} 
Let $X$ be $\C^n$ and consider the diagonal $T=S^1$-action on $X$. We consider the symplectic form $\omega$ given by $\frac{i}{2}\sum_{j=1}^n dz_j \wedge d\overline{z_j}$. The moment map $\mu$ is given by 
\[
\mu(z) = \pi \sum_{j=1}^n |z_j|^2.  
\]
The equivariant volume of $X$ is given by 
\begin{equation} 
\label{eq:equiv_volume_Cn}
\int_X e^{\homega} = \int_{\C^n} e^{-\lambda \mu(z)} d \vol_{\C^n} = \frac{1}{\lambda^n} 
\end{equation} 
where we assumed $\Re(\lambda)>0$. 
We see this either by performing the Gaussian integral directly or by the equivariant localization formula. On the other hand, the symplectic quotient is $\PP^{n-1} = X/\!/_t T$ when $t>0$ and is the empty set when $t<0$; the class $[\omega_{\rm red}]$ of the reduced symplectic form  equals $t c_1(\cO(1))$ for $t>0$. Therefore the volume of the quotient is 
\begin{equation} 
\label{eq:volume_P}
\int_{X/\!/_t T} e^{\omega_{\rm red}} = 
\begin{dcases}
\int_{\PP^{n-1}} \frac{t^{n-1}}{(n-1)!} c_1(\cO(1))^{n-1} 
= 
\frac{t^{n-1}}{(n-1)!} & \text{if $t>0$} \\ 
0  & \text{if $t\le 0$}. 
\end{dcases} 
\end{equation} 
We can check that the equivariant volume \eqref{eq:equiv_volume_Cn} is indeed the Fourier transform of the function \eqref{eq:volume_P}. 
\end{example} 

\begin{remark}[Jeffrey-Kirwan residues; the inverse Fourier transformation] 
\label{rem:JK_res} 
In \eqref{eq:Fourier_trans_volume} we expressed the equivariant volume as a Fourier transform of the volume of the reduction. We can consider the \emph{inverse} Fourier transformation: 
\begin{equation} 
\label{eq:inverse_FT} 
\vol(X/\!/_t T) = \frac{1}{(2\pi i)^l} \int_{i \R^l} e^{\lambda \cdot t} \vol_T(X) d\lambda 
\end{equation} 
where $\vol(X/\!/_t T)$ is the symplectic volume of $X/\!/_tT$ and $\vol_T(X)$ is the equivariant volume of $X$. Note that the integration contour is the pure imaginary axis $i \R^l$. We can use the localization formula for equivariant integrals and express $\vol_T(X)$ as the sum of contributions from fixed loci: 
\[
\vol_T(X) = \sum_{F\subset X^T} \int_F \frac{e^{\homega}}{e_T(\cN_F)}
\]
where $F$ ranges over $T$-fixed components of $X$ and $\cN_F$ is the normal bundle of $F$ in $X$. 
Each localization contribution can have poles at $\lambda=0$, but the total sum $\vol_T(X)$ is regular at $\lambda=0$ (provided $X$ is compact). By perturbing the integration contour $i \R^l$ in a certain direction (to avoid poles at $\lambda=0$), we can express the inverse Fourier transform \eqref{eq:inverse_FT} as the sum of Jeffrey-Kirwan residues \cite{Jeffrey-Kirwan:localization} of fixed loci contributions 
\[
\vol(X/\!/_t T) = \sum_{F\subset X^T} \JKRes_{\lambda=0} \left( e^{\lambda \cdot t} \int_F \frac{e^{\homega}}{e_T(\cN_F)} \right)d\lambda. 
\] 
Here the Jeffrey-Kirwan residues ``$\JKRes$'' depend on the choice of a perturbation. 
In the example above, choosing a perturbed path $\Re(\lambda) = \epsilon >0$, we calculate the inverse Fourier transformation of \eqref{eq:equiv_volume_Cn} as: 
\[
\int_{\epsilon - i \infty}^{\epsilon + i \infty} \frac{e^{\lambda t}}{\lambda^n} d\lambda 
= \begin{cases} 
\Res_{\lambda=0} \dfrac{e^{\lambda t}}{\lambda^n} d\lambda & \text{if $t>0$;} \\ 
0 & \text{if $t\le 0$.}  
\end{cases} 
\]
This gives $\vol(X/\!/_t T)$ in \eqref{eq:volume_P}. 
The reduction conjecture we will introduce later (see Conjecture \ref{conj:reduction}) can be viewed as a quantum analogue of the Jeffrey-Kirwan residues. 
\end{remark} 

\subsection{Fourier transformation of quantum volumes} 
\label{subsec:FT_quantum_volume} 

We now want to ask the following question: \emph{what is the quantum analogue of \eqref{eq:classical_FT}?}  Here we give an answer to this question using Givental's heuristics \cite{Givental:homological}. 

Recall that quantum cohomology can be viewed as a semi-infinite cohomology (Floer cohomology) of the universal covering $\widetilde{\cL X}$ of the free loop space $\cL X$. 
\[
\QH^*(X) ``= " H^{\frac{\infty}{2}+*}(\widetilde{\cL X}) 
\]
For simplicity, we assume\footnote{Otherwise, we need to work with the universal covering of the space of contractible loops. We do not rely on this assumption in the general theory in \S\ref{sec:2nd_lecture}.} that $\pi_1(X) = \{1\}$ in this section. Then $\widetilde{\cL X}$ consists of pairs $(\gamma, [g])$ of a loop $\gamma \in \cL X$ and the homotopy type of a disc $g \colon D^2 \to X$ such that $g|_{S^1} = \gamma$. 
The identity class $1\in \QH^*(X)$ corresponds the following semi-infinite cycle 
\[
\widetilde{\cL X}_+ = \left\{(\gamma,[g])\in \widetilde{\cL X} : 
\begin{array}{l} 
\gamma \in \cL X, \ \text{$[g]$ is represented by a holomorphic} \\ 
\text{disc $g\colon D^2 \to X$ with $g|_{S^1} = \gamma$.}  
\end{array} \right\}
\]
called the \emph{Floer fundamental cycle}. 
Givental \cite{Givental:homological} considered the following `path integral' over $\widetilde{\cL X}_+$, which has currently no mathematically rigorous definition: 
\begin{equation} 
\label{eq:path_integral}
\int_{\widetilde{\cL X}_+} e^{(\Omega - z \cA)/z} 
\end{equation} 
where $\Omega$ is the symplectic form on $\widetilde{\cL X}$ given by 
\[
\Omega = \frac{1}{2\pi} \int_0^{2\pi} \ev_\theta^* (\omega)  \, d\theta 
\]
with $\ev_\theta \colon \widetilde{\cL X} \to X$, $(\gamma,[g]) \mapsto \gamma(e^{i\theta})$ being the evaluation map, $\cA \colon \widetilde{\cL X} \to \R$ is the action functional 
\[
\cA(\gamma,[g]) = \int_{D^2} g^* \omega  
\] 
and $z$ is the $S^1_{\rm loop}$-equivariant parameter. Here $S^1_{\rm loop}$ stands for the $S^1$ that acts on $\cL X$ or $\widetilde{\cL X}$ by loop rotation: $\gamma(e^{i \theta}) \mapsto \gamma(e^{z} e^{i \theta})$ for $e^z \in S^1$. Note that $\cA$ is the moment map for the $S^1_{\rm loop}$-action on $\widetilde{\cL X}$ and $\Omega - z \cA$ is an equivariantly closed 2-form. 
Although the integral \eqref{eq:path_integral} is not mathematically defined, Givental \cite{Givental:homological} computed it by the localization method, by replacing free loop spaces with their algebraic models (polynomial loop spaces) when $X$ is a projective space, a toric variety and a complete intersection in them.  
His computation suggests the following: 
\[
\int_{\widetilde{\cL X}_+} e^{(\Omega- z \cA)/z} = (\text{a monomial in $z$})  \int_X J_X(-[\omega], -z) \cup z^{n-\frac{\deg}{2}} z^{c_1(X)} \hGamma_X 
\]
where $n= \dim_\C X$, $\deg$ is the endomorphism of $H^*(X)$ that maps a cohomology class $\phi \in H^k(X)$ to $k \phi\in H^k(X)$, $J_X(\tau, z)$ is the small $J$-function and $\hGamma_X$ is the $\hGamma$-class of $X$ (see below for $J_X$ and $\hGamma_X$). 

\begin{remark} 
(1) The small $J$-function is a cohomology-valued function of $\tau \in H^2(X)$ and $z$. 
Using the descendant Gromov-Witten invariants, we can define it as follows 
\[
J_X(\tau, z) = e^{\tau/z}
\left (1+ \sum_{d\in H_2(X,\Z), d\neq 0} \sum_i \corr{\frac{\phi^i}{z(z-\psi)}}_{0,1,d} \phi_i e^{\tau \cdot d} \right),  
\] 
where $\{\phi_i\}$ is a basis of $H^*(X)$ and $\{\phi^i\}$ is the dual basis such that $\int_X \phi_i \cup \phi^j = \delta_i^j$. The $J$-function $J_X(\tau,z)$ for a general bulk parameter $\tau \in H^*(X)$ will be introduced in \S\ref{subsec:Givental_cone_J-function}. (Later, we will also introduce the Novikov variables $Q$; here we set them to be 1.) 

(2) The degree $d$ term in the $J$-function arises from the localization along the $S^1_{\rm loop}$-fixed component $X_d$ in $\widetilde{\cL X}_+$, where $X_d$ is a copy of $X$ which arises as the deck transform of the set of constant loops. Note that $\pi_1(\cL X) = \pi_2(X) = H_2(X,\Z)$ when $X$ is simply connected. 

(3) The cohomology class $\hGamma_X\in H^*(X,\R)$, called the $\hGamma$-class, arises from a certain infinite product which was dropped in the original computation of Givental \cite{Givental:homological} (see \cite[\S 5]{Galkin-Iritani} for an exposition). It comes from the contribution of constant loops. 
The localization contribution along the set $X \subset \widetilde{\cL X}_+$ of constant loops is given as 
\[
\int_X \frac{e^{\omega/z}}{e_{S^1}(\cN^+)} 
\]
where $\cN^+$ is the normal bundle of $X$ in $\widetilde{\cL X}_+$ (the positive normal bundle of $X$ inside $\widetilde{\cL X}$). We have $\cN^+ \cong \bigoplus_{k=1}^\infty TX \otimes e^{kz}$ and  
\[
\frac{1}{e_{S^1}(\cN^+)} =\frac{1}{\prod_{k=1}^\infty \prod_{i=1}^n (\delta_i + kz)} 
\sim (\text{const}) z^{-\deg/2} z^{c_1(X)} \hGamma_X 
\]
by regularizing the infinite product, where $\delta_1,\dots,\delta_n$ are the Chern roots of $TX$ such that $c(TX) = (1+\delta_1) \cdots (1+\delta_n)$ and the $\hGamma$-class is defined to be 
\[
\hGamma_X = \prod_{i=1}^n \Gamma(1+\delta_i) 
= e^{-\gamma c_1(X) + \sum_{k=2}^\infty (-1)^k \zeta(k) (k-1)! \ch_k(TX)}.  
\]
Note that Euler's $\Gamma$-function $\Gamma(1+x)$ has poles at $x=-1,-2,-3,\dots$ and can be viewed as a regularization of the product $\prod_{k=1}^\infty (x+k)^{-1}$. 
\end{remark} 

In view of this heuristic calculation, we define the \emph{quantum volume} of $X$ to be 
\begin{equation}
\label{eq:quantum_volume} 
\Pi_X = \Pi_X(\tau) := \int_X J_X(\tau, -z) \cup z^{n-\frac{\deg}{2}} z^{c_1(X)} \hGamma_X 
\end{equation} 
with $\tau \in H^2(X)$. 
From a viewpoint of mirror symmetry, we can think of $\Pi_X$ as an analogue of \emph{periods} in complex geometry. Therefore we could also call it a ``quantum period''. However, quantum periods have been used to mean different quantities \cite{Barannikov:quantum_periods, CCGGK:MS_Fano} and we avoid this terminology. 
Note that the quantum volume is asymptotic to the classical one 
\[
\Pi_X(-[\omega]) \sim \int_X e^{\omega/z} z^{n-\frac{\deg}{2}} z^{c_1(X)} \hGamma_X \sim \int_X \frac{\omega^n}{n!}  
\]
in the large volume limit, i.e.~$\int_d \omega \to \infty$ for effective curve classes $d \neq 0$. 
When $X$ is equipped with a Hamiltonian $T$-action, we can also define the $T$-equivariant version $\Pi^{\rm eq}_X(\tau)$ by replacing the $J$-function and the $\hGamma$-class with their equivariant counterparts. 
We propose the following naive conjecture. 

\begin{conjecture} 
\label{conj:FT_quantum_volumes} 
The equivariant quantum volume $\Pi^{\rm eq}_X(-[\homega])$ of $X$ and the quantum volume $\Pi_{X/\!/_t T}(-[\omega_{\rm red}])$ of the reduction are related by Fourier transformation:  
\[
\Pi^{\rm eq}_X(-[\homega]) \quad \underset{\rm FT}{\longleftrightarrow} 
\quad \Pi_{X/\!/_t T}(-[\omega_{\rm red}]), 
\]
where $\Pi_X^{\rm eq}(-[\homega])$ is viewed as a function of $\lambda\in\Lie(T)$ and $\Pi_{X/\!/_t T}(-[\omega_{\rm red}])$ is viewed as a function of $t\in \Lie^*(T)$; $\homega=\omega - \lambda \cdot \mu$ is the Duistermaat-Heckman form \eqref{eq:DH} and $\omega_{\rm red}$ is the reduced symplectic form on $X/\!/_t T = \mu^{-1}(t)/T$. 
\end{conjecture}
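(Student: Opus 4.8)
The plan is to lift the classical Fubini computation \eqref{eq:Fourier_trans_volume} to Givental's path integral over the Floer fundamental cycle. On the universal cover $\widetilde{\cL X}$ the residual $T$-action is Hamiltonian, with moment map given by the loop average $M(\gamma,[g]) = \frac{1}{2\pi}\int_0^{2\pi}\mu(\gamma(e^{i\theta}))\,d\theta$ of $\mu$ (extended to an equivariantly closed $2$-form for the combined $T\times S^1_{\rm loop}$-action, just as $\homega$ extends $\omega$), and Givental's localization heuristic should give
\[
\int_{\widetilde{\cL X}_+} e^{(\Omega - z\cA - \lambda\cdot M)/z} = (\text{monomial in }z)\cdot \Pi_X^{\rm eq}(-[\homega]),
\]
the factor $e^{-\lambda\cdot M/z}$ accounting precisely for the shift of the bulk parameter by $\lambda\cdot\mu$ (note that $M$ restricts to $\mu$ on constant loops). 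Applying Fubini to $M$ --- integrating first over $M^{-1}(t)$ and then over $t\in\Lie(T)^*$ --- the Fourier kernel $e^{-\lambda\cdot t/z}$ emerges from the restriction of $e^{-\lambda\cdot M/z}$ to $M^{-1}(t)$, while the reduced space $M^{-1}(t)/T$ should be identified, by a reduction-in-stages (quantum Kirwan) argument, with a disjoint union, over the topological type of the principal $T$-bundle underlying the disc $g$ (a class in $\pi_1(\cL T)\cong\Hom(S^1,T)$), of components of the universal cover of $\cL(X/\!/_t T)$; summing over these pieces reassembles $\Pi_{X/\!/_t T}(-[\omega_{\rm red}])$ together with all of its Novikov variables $q_i$. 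This is the heuristic origin of the identification $\bS_i\leftrightarrow q_i$, and it produces
\[
\Pi_X^{\rm eq}(-[\homega])(\lambda) = (\text{monomial in }z)\int_{t\in\Lie(T)^*} e^{-\lambda\cdot t/z}\,\Pi_{X/\!/_t T}(-[\omega_{\rm red}])(t)\,dt,
\]
which is the asserted Fourier duality.

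Because the path integral has no rigorous definition, an actual proof would proceed differently. First I would establish the reduction conjecture (Conjecture \ref{conj:reduction}), which expresses the $I$-function of $X/\!/T$ as a Jeffrey--Kirwan residue --- a discrete Fourier transform --- of the equivariant $J$-function of $X$. Next I would show that applying the same transform to the equivariant quantum volume $\Pi_X^{\rm eq}$, built from the equivariant $J$-function by pairing with $z^{n-\deg/2}z^{c_1^T(TX)}\hGamma_X^{\rm eq}$, yields $\Pi_{X/\!/T}(-[\omega_{\rm red}])$; the crux here is a compatibility of $\hGamma$-classes, namely that after taking residues $\hGamma_X^{\rm eq}$ factors as $\hGamma_{X/\!/T}$ times the $\hGamma$-contribution of the normal directions removed by the reduction --- an identity which reduces to the reflection/multiplication formula for the $\Gamma$-function and matches the infinite-product regularizations of $1/e_{S^1}(\cN^+)$ on $\widetilde{\cL X}$ and on $\widetilde{\cL(X/\!/T)}$. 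Finally I would verify the base case $X=\C^n$ (Examples \ref{ex:classical_Cn}, \ref{exa:quantum_Cn}, \ref{exa:GKZ}), where $\Pi_X^{\rm eq}$ is an explicit Mellin--Barnes integral whose Mellin transform is manifestly the quantum volume of the toric quotient, and then bootstrap to general toric $X$, and to complete intersections via quantum Lefschetz and the abelian/nonabelian correspondence.

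The hard part will be the middle step, together with the analytic content hidden in the phrase ``Fourier transformation.'' One must commit to an integration contour --- equivalently, to a Jeffrey--Kirwan chamber --- and different chambers land at different cusps of the global K\"ahler moduli space of Figure \ref{fig:Kaehler_moduli}, so the identity can hold only after fixing a chamber and carrying along the corresponding analytic-continuation (Stokes) data. One must also impose properness of the $T$-moment map to keep the $t$-integral and the reduced loop spaces under control (the statement assumes $X$ compact, yet the motivating examples are already non-compact), and control convergence, since $\Pi_X^{\rm eq}$ is a priori only a formal series in Novikov variables away from a large-radius limit. I expect the toric base case to be essentially a computation, with the genuine difficulty concentrated in making the reduction conjecture and the $\hGamma$-compatibility precise and uniform across the chamber structure.
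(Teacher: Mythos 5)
The statement you are addressing is a \emph{conjecture}; the paper contains no proof of it, only the heuristic derivation via Givental's path integral and a handful of worked examples (Examples \ref{exa:quantum_Cn}, \ref{ex:quantum_toric}, \ref{exa:Givental_path_integral_direct_computation}, the toric case resting on Theorem \ref{thm:equivariant_oscint}). Your first paragraph --- lifting the classical Fubini argument \eqref{eq:Fourier_trans_volume} to the Floer fundamental cycle, with the loop-averaged moment map and the identification of the reduced loop space with copies of (the universal cover of) $\cL(X/\!/_t T)$ indexed by $\Hom(S^1,T)$ --- is essentially the same heuristic the paper itself relies on (it is carried out explicitly for $X=\C$ in Example \ref{exa:Givental_path_integral_direct_computation}), and your proposed rigorous route through the reduction conjecture plus a $\hGamma$-class compatibility is in the spirit of Remark \ref{rem:central_charges} and of Conjecture \ref{conj:reduction}. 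So at the level of heuristics you are aligned with the paper; but nothing in your proposal constitutes a proof, nor could it be checked against one.

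The concrete gap, beyond the analytic issues you flag, is this: the conjecture as literally displayed is acknowledged in the paper to be \emph{false in general}. It cannot hold when $X/\!/_t T=\emptyset$, the inverse transform must be read as an asymptotic expansion near the large-radius limit of the quotient, and in general the quotient-side quantum volume must be evaluated not at the uncorrected Kirwan image $-[\omega_{\rm red}]$ but at a nontrivial, possibly $z$-dependent mirror map $\sigma_Y(\tau)$ (a bulk deformation outside $H^2$), as in the corrected form \eqref{eq:FT_quantum_volumes}. Your plan never produces or controls this mirror map: the reduction conjecture outputs an $I$-function, i.e.\ a point of the Givental cone at an a priori unknown bulk parameter, so the middle step you call the crux would at best establish the corrected statement, not the displayed one, and your bootstrap ``to general toric $X$'' already breaks at this point in the non-Fano case, where the paper notes the Fourier integral may only make sense as an asymptotic series in $z$. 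Moreover, Conjecture \ref{conj:reduction} is itself open (established only in special situations such as Example \ref{exa:projective_bundle}), so the proposal reduces one open conjecture to another rather than proving either; it would be more accurate to present it as a refinement-and-reduction strategy, stated from the outset for the corrected form with $\sigma_Y$ and with a fixed GIT chamber and asymptotic interpretation built into the statement.
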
 

\begin{remark} 
The conjecture is not precisely stated and we need a bulk deformation for $\Pi_{X/\!/_t T}$ in general. A more precise conjecture can be stated as follows. 
Let $Y$ be a smooth GIT quotient $X/\!/T$ of $X$. 
Then there exists a `mirror map' $\sigma_Y(\tau) \in H^*(Y)$ depending on $\tau \in H^2_T(X,\R)$ (and possibly\footnote{When the mirror map depends on $z$, we expect that it has an asymptotic expansion  $\sigma_Y(\tau,z) \sim \sigma_{Y,0}(\tau) + \sigma_{Y,1}(\tau) z + \sigma_{Y,2}(\tau) z^2 + \cdots$.} on $z$) such that $\sigma_Y(\tau) \sim \kappa_Y(\tau)$ (as $-\tau$ tends to infinity within the GIT chamber of $Y$) and that
\begin{align}
\label{eq:FT_quantum_volumes}
\begin{split}  
\Pi_X^{\rm eq}(\tau) & = 
\int_{\R^l} e^{-\lambda \cdot t/z} \Pi_Y(\sigma_Y(\tau- \lambda \cdot t))  dt  \\ 
\Pi_{Y}(\sigma_Y(\tau- \lambda \cdot t)) & = \frac{1}{(2\pi i z)^l} \int_{i \R^l} e^{\lambda \cdot t/z} \Pi_X^{\rm eq}(\tau) d\lambda  
\end{split} 
\end{align} 
where $\kappa_Y \colon H^*_T(X) \to H^*(Y)$ is the Kirwan map and $z>0$. In general, $\sigma_Y(\tau)$ may not be a cohomology class in $H^2(Y)$. In such cases, we extend the definition (see \eqref{eq:quantum_volume}) of the quantum volume $\Pi_Y(\sigma)$ to a general cohomology class $\sigma\in H^*(Y)$ by using the big $J$-function introduced in \S\ref{subsec:Givental_cone_J-function}. 
Note that, if $Y=\mu^{-1}(t)/T$ and $\tau = -[\homega]$, then we have $\kappa_Y(\tau - \lambda \cdot t) = -[\omega_{\rm red}]$.  
It is a subtle question whether the Fourier integral makes sense analytically. The examples below seem to suggest that the analytic properties are better when $\tau$ is a real class and $z>0$.  

However, the conjecture in this form is not correct in general.  For example, it cannot be true when $Y$ is empty. The second equality should be perhaps interpreted as an asymptotic expansion, i.e.~the left-hand side arises as the asymptotic expansion of the right-hand side as $t$ approaches the `large radius limit point' of $Y$. The first equality might be true only for a special $Y$. 
\end{remark} 

\begin{remark} 
The symplectic quotient $Y=X/\!/_t T$ changes discontinuously as $t$ varies. The conjecture says, however, that $\Pi_Y (\sigma_Y(\tau))$ should be analytically continued to each other among different quotients $Y$. 
As mentioned above, the conjecture is not applicable when $X/\!/_t T= \emptyset$. However, for such parameters $t \in \Lie(T)^*$, we anticipate that the inverse Fourier transform of $\Pi^{\rm eq}_X(-[\homega])$, considered as a function of $t$, exhibits zero asymptotics. See the examples below. 
\end{remark} 

\begin{remark} 
\label{rem:central_charges} 
The quantum volume equals, up to a simple factor, the \emph{quantum cohomology central charge} of the structure sheaf $\cO$ in \cite{Iritani:integral}. 
We can generalize the above conjecture to central charges associated with elements of the $K$-group of vector bundles. We define 
\[
\Pi_X(\cE; \tau) := \int_X J_X(\tau, -z) \cup z^{n-\frac{\deg}{2}} z^{c_1(X)} \hGamma_X (2\pi i)^{\frac{\deg}{2}} \ch(\cE)   
\]
for $\cE$ in the topological $K$-group $K^0(X)$. We can define the equivariant version $\Pi^{\rm eq}_X(\cE;\tau)$ associated with equivariant classes $\cE \in K^0_T(X)$ and $\tau\in H^2_T(X)$ similarly. Then we conjecture the Fourier duality between 
\[
\Pi^{\rm eq}_X(\cE;\tau) \quad \underset{\rm FT}{\longleftrightarrow} 
\quad \Pi_Y(\kappa_Y(\cE); \sigma_Y(\tau-\lambda \cdot t))   
\]
where $Y$ is a GIT quotient of $X$ and $\kappa_Y(\cE)\in K^0(Y)$ is the image of the $K$-theoretic Kirwan map. 
The choice of an integration cycle, however, seems more subtle (see Example \ref{ex:toric_central_charges}). 
The recent paper \cite{AFW:MS-Gamma-Fourier} explores this Fourier duality when $X$ is a vector space and $Y$ is a toric Fano variety, and applies it to the mirror symmetric Gamma conjecture. 
\end{remark} 

\begin{remark} 
\label{rem:quantum_volume} 
After we posted the first version of this paper, we learned that Cassia-Longhi-Zabzine \cite{CLZ:symplectic_cuts} had already coined the name ``quantum volume'' for the corresponding quantity in gauged linear sigma models. Their quantum volume, denoted by $\cF^D$, is given by replacing the $J$-function in \eqref{eq:quantum_volume} with the $I$-function. Equation (1.5) in their paper \cite{CLZ:symplectic_cuts} provides a relationship between the quantum volumes of $X$ and its symplectic reduction, similar to the first line of \eqref{eq:FT_quantum_volumes}. 
\end{remark}

\begin{example}[cf.~Example \ref{ex:classical_Cn}] 
\label{exa:quantum_Cn}
Take $X = \C^n$ and consider the diagonal $T=S^1$-action on $X$. 
The equivariant quantum volume of $X$ at the parameter $\tau=0 \in H^2_T(X)$ (corresponding to the class of $\homega=\omega- \lambda \mu$ in Example \ref{ex:classical_Cn}) is given by 
\[
\Pi^{\rm eq}_{\C^n}(0) = \int_{\C^n} z^{n-\frac{\deg}{2}} z^{c_1^T(\C^n)} \hGamma_{\C^n} 
= \left(\frac{z}{\lambda}\right)^n z^{n\lambda/z} \Gamma(1+\lambda/z)^n 
= z^{n\lambda/z} \Gamma(\lambda/z)^n 
\]
since the equivariant $J$-function equals $1$. This has the singularity $\sim z^n/\lambda^n$ at $\lambda=0$, cf.~\eqref{eq:equiv_volume_Cn}. 
The inverse Fourier transform of $\Pi^{\rm eq}_{\C^n}(0)$ is the Mellin-Barnes integral 
\begin{equation} 
\label{eq:MB_projective} 
\frac{1}{2\pi i z} 
\int_{\epsilon -i \infty}^{\epsilon + i\infty} \Pi_{\C^n}^{\rm eq}(0) e^{\lambda t/z} d\lambda 
= \frac{1}{2\pi i z} \int_{\epsilon -i \infty}^{\epsilon + i\infty} z^{n\lambda/z} \Gamma(\lambda/z)^n q^{-\lambda/z} d\lambda 
\end{equation} 
where we set $q = e^{-t}$ and $\epsilon>0$. Suppose $z>0$. We can close the integration contour to the left and express the right-hand side as the sum of residues at poles $\lambda =0, -z, -2z, -3z, \dots$. 
We find that the integral \eqref{eq:MB_projective} equals 
\begin{multline*} 
\frac{1}{z} \sum_{d=0}^\infty \Res_{\lambda= -dz} \left(\frac{q}{z^n}\right)^{-\lambda/z} 
\Gamma(\lambda/z)^n d\lambda \\
 = \int_{\PP^{n-1}} J_{\PP^{n-1}}(p \log q, -z) \cup z^{n-1-\frac{\deg}{2}} z^{c_1(\PP^{n-1})} 
\hGamma_{\PP^{n-1}} = \Pi_{\PP^{n-1}}(p \log q)
\end{multline*} 
where $p = c_1(\cO(1))$ is the hyperplane class and 
\[
J_{\PP^{n-1}}(p \log q, z) = \sum_{d=0}^\infty 
\frac{q^{d+p/z}}{\prod_{k=1}^d (p+kz)^n} 
\]
is the small $J$-function of $\PP^{n-1}$. Therefore the conjecture holds with the uncorrected mirror map $\sigma(-\lambda t) = \kappa(- \lambda t) = -p t$ when $\C^n/\!/_t S^1 = \PP^{n-1}$ (i.e.~$t>0$). 

The case with $t<0$ is somewhat mysterious since $\C^n/\!/_t S^1$ is empty. We note that the Mellin-Barnes integral \eqref{eq:MB_projective} (or $\Pi_{\PP^{n-1}}(p\log q)$) depends only on $\sfq:= q/z^n$ and can be written as 
\[
\frac{1}{2\pi i} 
\int_{\epsilon - i\infty}^{\epsilon + i \infty} \Gamma(\lambda)^n \sfq^{-\lambda} d\lambda. 
\]
The Stirling approximation $\Gamma(\lambda) \sim e^{\lambda \log \lambda - \lambda}$ together with the Laplace method (stationary phase method) applied at the critical point $\lambda=\sfq^{1/n}$ of the phase function $n(\lambda \log \lambda - \lambda) - \lambda \log \sfq$ yields the asymptotic expansion\footnote{The same technique was used to prove the Gamma conjecture for $\PP^{n-1}$ in \cite[\S 5]{GGI:gammagrass}.}
\[
\Pi_{\PP^{n-1}}(p \log q) 
\sim \frac{(2\pi)^{\frac{n-1}{2}}}{\sqrt{n} \sfq^{\frac{n-1}{2n}}} e^{-n \sfq^{1/n}} (1 + O(\sfq^{-1/n})) 
\]
as $\sfq\to \infty$. Since $t\to -\infty$ corresponds to $\sfq = e^{-t}/z^n\to \infty$, we see that this function is very close to zero (doubly exponentially small) for $t \ll 0$. On the other hand, the $\sfq \to +0$ asymptotics (or $t\to +\infty$ asymptotics) is given by the residue at $\lambda=0$: 
\[
\Pi_{\PP^{n-1}}(p\log q) \sim \Res_{\lambda=0} \Gamma(\lambda)^n \sfq^{-\lambda} d\lambda 
= \int_{\PP^{n-1}} \sfq^{-p} \hGamma_{\PP^{n-1}} \sim \frac{(-\log \sfq)^{n-1}}{(n-1)!}. 
\] 
In this sense, the quantum volume $\Pi_{\PP^{n-1}}(p\log q)$ ``smoothes'' the classical volume function  $\vol(X/\!/_t T)$ in \eqref{eq:volume_P}. 

More generally, by applying the monodromy transformation $\log q \mapsto \log q -2 \pi i m$ to the above computation, we observe that 
\[
\Pi_{\PP^{n-1}}(\cO(m); p\log q) = \frac{1}{2\pi i z} \int_{\epsilon - i \infty}^{\epsilon + i\infty} 
e^{\lambda t/z} \Pi^{\rm eq}_{\C^n}(\cO(m);0) d \lambda.  
\]
This is an instance of the generalized conjecture in Remark \ref{rem:central_charges}. 
\end{example} 

\begin{example} 
\label{ex:quantum_toric} 
Let $X$ be a Fano toric manifold and consider the natural $T = (S^1)^n$-action on $X$ with $n= \dim_\C X$. 
Let $D_1,\dots, D_m$ be torus invariant prime divisors of $X$. The classes $[D_i]$ form a basis of $H^2_T(X)$. 
Recall that the toric variety $X$ is given by a rational simplicial fan in $\R^n$. 
Let $b_1,\dots,b_m \in \Z^n$ be the primitive generators of 1-dimensional cones of the fan corresponding to the divisors $D_1,\dots,D_m$. 
The equivariant parameter $\lambda_j \in \Lie^*(T) \cong \R^n$ equals the linear combination $\lambda_j = \sum_{i=1}^m b_{ij} [D_i]$ where $b_i = (b_{ij})_{j=1}^n$. 
The \emph{Landau-Ginzburg mirror} $W_\tau(x)\in \C[x_1^\pm,\dots,x_n^\pm]$ of $X$ is a polynomial function parametrized by $\tau =\sum_{i=1}^m \tau^i [D_i] \in H^2_T(X)$, which is given by 
\[
W_\tau(x) = \sum_{i=1}^m e^{\tau^i} x^{b_i} \quad \text{with $x^{b_i} =  x_1^{b_{i1}} \cdots x_n^{b_{in}}$}.   
\]
The following result is known in the context of toric mirror symmetry (this follows from Givental's mirror theorem \cite{Givental:toric_mirrorthm}): 
\begin{theorem}[{\cite[Theorem 4.17]{Iritani:integral}}] 
\label{thm:equivariant_oscint} 
For $\tau \in H^2_T(X,\R)$ and $z>0$, we have 
\[
\Pi_X^{\rm eq}(\tau) = \int_{(\R_{>0})^n} e^{-(W_\tau(x)- \lambda\cdot \log x)/z} \frac{dx}{x} 
\]
where $\frac{dx}{x} = \frac{dx_1}{x_1} \cdots \frac{dx_n}{x_n}$. 
\end{theorem}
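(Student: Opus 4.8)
\emph{Overall approach.} The plan is to deduce the identity from Givental's equivariant mirror theorem for toric varieties \cite{Givental:toric_mirrorthm}, by recognising both sides as flat sections of one and the same connection and then matching them. On the $A$-side, the right-hand side $\int_X J_X^{\rm eq}(\tau,-z)\cup z^{n-\frac{\deg}{2}}z^{c_1^T(X)}\hGamma_X^{\rm eq}$ is, by its shape, the equivariant quantum cohomology central charge of $\cO_X$ (see Remark~\ref{rem:central_charges}): the entries of $J_X^{\rm eq}(\tau,-z)$ assemble the fundamental solution of the equivariant quantum connection along $\tau\in H^2_T(X)$, and the $\tau$-independent factor $z^{n-\frac{\deg}{2}}z^{c_1^T(X)}\hGamma_X^{\rm eq}$ only fixes a flat frame, so the right-hand side is a flat section of the equivariant quantum connection of $X$. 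On the $B$-side, $\tau\mapsto\int_{(\R_{>0})^n}e^{-(W_\tau(x)-\lambda\cdot\log x)/z}\frac{dx}{x}$ is a ``period'' of the Landau--Ginzburg model: $\partial/\partial\tau^i$ acts under the integral sign as multiplication by $-e^{\tau^i}x^{b_i}/z$, and the $\Z$-linear relations among the Laurent monomials $x^{b_1},\dots,x^{b_m}$ --- together with the Euler-type relation coming from $\lambda\cdot\log x$ --- show that this integral is annihilated by the GKZ operators attached to the fan, i.e.\ it is a flat section of the mirror $D$-module of $X$.

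\emph{Key steps.} First I would invoke Givental's toric mirror theorem: the mirror $D$-module just described is isomorphic to the equivariant quantum $D$-module of $X$, and since $X$ is Fano the mirror map is trivial, so $I_X^{\rm eq}=J_X^{\rm eq}$ and $\tau$ is the common flat coordinate; hence both sides of the asserted identity are flat sections of the \emph{same} connection, and one is reduced to identifying which sections they are. Second, I would do this by a multidimensional Mellin--Barnes computation (in effect the route of \cite[Theorem~4.17]{Iritani:integral}): after the substitution $x=e^y$, Fubini, and the one-dimensional formula $\int_0^\infty e^{-au}u^{s}\,\frac{du}{u}=a^{-s}\Gamma(s)$, the oscillatory integral becomes, up to an explicit power of $2\pi\iu z$, an integral of the form $\int_C\bigl(\prod_{i=1}^m\Gamma(s_i)\bigr)e^{\langle\tau,s\rangle/z}z^{\sum_i s_i}\,ds$ over a contour $C$ inside the affine slice $\{(s_i) : \sum_i b_i s_i = \lambda/z\}$; one then closes the contour and sums residues. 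The poles of $\prod_i\Gamma(s_i)$ along $C$ are organised, via the toric exact sequence $0\to M\to\Z^m\to H^2(X)\to 0$ and the Stanley--Reisner relations, into a copy of $H^*_T(X)$; the residue at the ``base'' pole produces the factor $\prod_{i=1}^m\Gamma(1+D_i^{\rm eq})=\hGamma_X^{\rm eq}$ (using the toric Euler sequence $0\to\cO^n\to\bigoplus_i\cO(D_i)\to TX\to 0$) times the monomial $z^{n-\frac{\deg}{2}}z^{c_1^T(X)}$, and the remaining residues, summed over $d\in\NE(X)$, reproduce term by term the power series expansion of $\int_X J_X^{\rm eq}(\tau,-z)\cup z^{n-\frac{\deg}{2}}z^{c_1^T(X)}\hGamma_X^{\rm eq}$. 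Alternatively one can argue by asymptotics: the Fano hypothesis places $0$ in the interior of the Newton polytope of $W_\tau$, so for $\tau\in H^2_T(X,\R)$ and $z>0$ the integral converges and has a large-radius stationary-phase expansion whose leading term is the classical equivariant volume $\int_X e^{\homega}$, as for the left-hand side --- but the Mellin--Barnes route yields the exact equality directly.

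\emph{Main obstacle.} The crux is the combinatorial bookkeeping in the second step: showing that the positive orthant $(\R_{>0})^n$ is precisely the Lefschetz thimble attached to the $\cO_X$-framing, i.e.\ that closing the Mellin--Barnes contour legitimately picks up exactly the residues labelled by $d\in\NE(X)$ with the correct $H^*_T(X)$-valued coefficients, and that the $\zeta$-regularised product $1/e_{S^1}(\cN^+)$ of the heuristic following \eqref{eq:quantum_volume} matches these Gamma factors. Two technical points feed into this, and they are also where the hypotheses enter: the Fano condition and the restrictions $\tau\in H^2_T(X,\R)$, $z>0$ guarantee both absolute convergence of the oscillatory integral and that the contour may be closed on the correct side; and the pole structure of $\prod_i\Gamma(s_i)$ restricted to the slice $\{\sum_i b_i s_i = \lambda/z\}$ must be matched bijectively, with multiplicities, against the monomial basis of $H^*_T(X)$ supported on the cones of the fan. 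Everything else --- that both sides satisfy the same equations, and that the leading asymptotics coincide --- is routine once Givental's mirror theorem is in hand.
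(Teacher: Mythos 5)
Your proposal is correct and takes essentially the same route as the paper, which gives no independent argument but attributes the statement to Givental's mirror theorem via \cite[Theorem 4.17]{Iritani:integral}: both sides are identified as solutions of the same (GKZ/equivariant quantum) $D$-module, and the oscillatory integral is matched to the $\hGamma$-class central charge by a Mellin--Barnes residue expansion. A few slips (the labelling of left- and right-hand sides, the sign and the $z$-factors in the exponent of the Mellin--Barnes integrand, and the rank of the trivial bundle in the toric Euler sequence, which should be $m-n$ rather than $n$) are cosmetic and do not affect the argument.
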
 
The function $W_\tau(x) - \lambda \cdot \log x$ is known as the \emph{equivariant mirror}  \cite{Givental:toric_mirrorthm} and this result shows a version of equivariant mirror symmetry. 
A new observation here is that we can also regard this as a \emph{Fourier transformation} in Conjecture \ref{conj:FT_quantum_volumes} by identifying $-\log x_i$ with the stability parameter $t_i$ of the $T$-action. We can rewrite the result as 
\[
\Pi_X^{\rm eq}(\tau) = \int_{\R^n} e^{-\lambda \cdot t/z} \Pi_{\pt}(W_\tau(e^{-t})) dt 
\]
where we use the fact that the bulk-deformed quantum volume of $\pt = X/\!/T$ is given by $\Pi_\pt(\sigma) = e^{-\sigma/z}$ for $\sigma \in H^0(\pt)$. 
Since $W_\tau(e^{-t}) = W_{\tau - \lambda \cdot t}(1)$, 
the conjecture holds with the mirror map $\sigma(\tau) = W_{\tau}(1)$ if the quotient $X/\!/T$ is a point. 

Consider the case where $X = \PP^1$. Then $W_\tau(x) = e^{\tau^1} x+ e^{\tau^2} x^{-1}$ for $\tau = \tau^1 [0] + \tau^2 [\infty]$. Suppose that $-\tau$ is ample so that $\tau^1+\tau^2<0$. The associated moment map $\mu \colon \PP^1 \to \R$ (such that $-\tau =[\homega] = [\omega - \lambda \mu]$) has $[\tau^1,-\tau^2]$ as the image. 
The inverse Fourier transform of $\Pi^{\rm eq}_{\PP^1}(\tau)$ is 
\[
\Pi_\pt(W_\tau(e^{-t})) = \exp(- (e^{\tau^1 -t} + e^{\tau^2+t})/z).  
\]
Similarly to Example \ref{exa:quantum_Cn}, this function is very small when $t$ is away from $[\tau^1,-\tau^2]$ (i.e.~$X/\!/_t T$ is empty) and is a \emph{real analytic smoothing} of the Duistermaat-Heckman measure $\mu_*(d\vol_{\PP^1}) = \chi_{[\tau^1,-\tau^2]}(t) dt$, see Figure \ref{fig:qDH}.  
\begin{figure}[t]
\centering 
\includegraphics[bb=89 604 349 767]{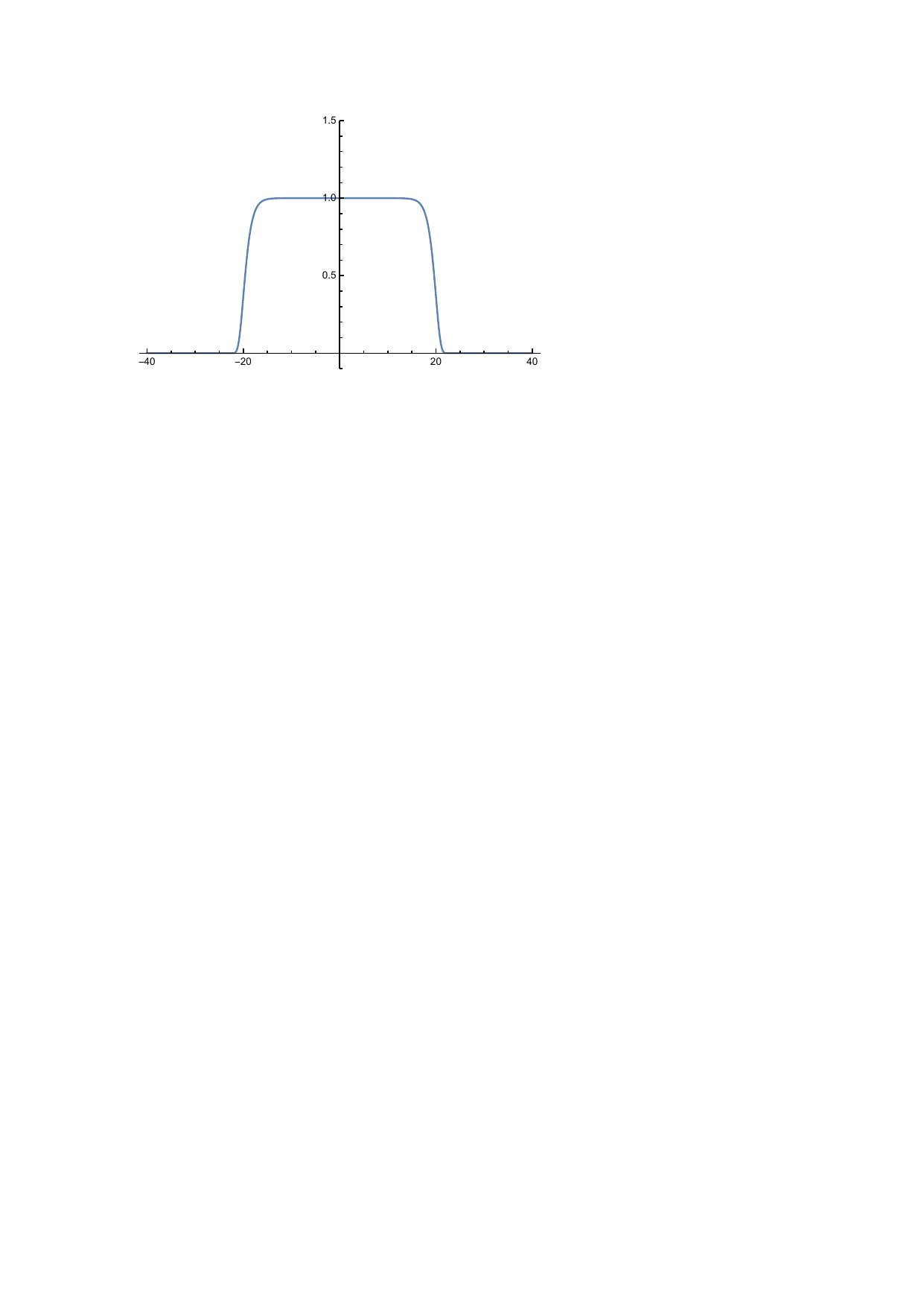}
\caption{The function $t\mapsto \Pi_\pt(W_\tau(e^{-t}))$ for $\tau= -20([0]+[\infty])$ and $z=1$. This smoothes the Duistermaat-Heckman measure $\mu_*(d\vol_{\PP^1})$ supported on $[-20,20]$.}
\label{fig:qDH} 
\end{figure} 
\end{example} 

\begin{example} 
\label{ex:toric_central_charges} 
Applying the monodromy transformation with respect to $\tau \mapsto \tau - 2\pi i \xi$ with $\xi \in H^2_T(X,\Z)$ to Theorem \ref{thm:equivariant_oscint}, we obtain the integral representation 
\[
\Pi_X^{\rm eq}(L_\xi; \tau) = \int_{\Gamma_\xi} e^{-(W_\tau(x) - \lambda \cdot \log x)/z} \frac{dx}{x} 
\]
where $L_\xi$ is a $T$-equivariant line bundle on $X$ such that $c_1^T(L_\xi) = \xi$ and $\Gamma_\xi \subset (\C^\times)^n$ is a noncompact cycle given as the monodromy transform of $(\R_{>0})^n$. This gives an instance of the generalized conjecture in Remark \ref{rem:central_charges}. 
\end{example}

\begin{remark} 
If $X$ is not Fano in Example \ref{ex:quantum_toric}, we would need a mirror map $\sigma(\tau)$ depending also on $z$ (see also the tautological mirror construction in Example \ref{exa:toric_tautological}). It is not clear if the Fourier integral makes sense analytically beyond the weak-Fano case; it might only make sense as an asymptotic series in $z$. 
\end{remark} 

\begin{example} 
\label{exa:Givental_path_integral_direct_computation}
It is interesting to compute the path integral \eqref{eq:path_integral} directly in a heuristic way, following physicists' methods. Consider $X= \C$ and a diagonal $T=S^1$-action on it. We have 
\begin{align*} 
& \widetilde{\cL X} = \cL X = \left\{ \sum_{n=-\infty}^\infty a_n  e^{i n\theta} : a_n \in \C \right\},  \quad 
\cL X_+ = \left\{\sum_{n=0}^\infty a_n  e^{i n\theta} : a_n \in \C \right\}, \\ 
& \Omega = \frac{i}{2} \sum_{n=-\infty}^\infty da_n \wedge d\overline{a_n}, \qquad 
\cA(\gamma) = \pi \sum_{n=-\infty}^\infty n |a_n|^2 
\end{align*} 
where $\gamma = \sum_n a_n e^{in \theta}$. The $T$-action on $\cL X$ gives the moment map $\hat{\mu} \colon \cL X \to \R$: 
\[
\hat{\mu} (\gamma) =\pi \sum_{n=-\infty}^\infty |a_n|^2.
\]
Thus the $(T\times S^1_{\rm loop})$-equivariant version of the Givental path integral \eqref{eq:path_integral} can be calculated as 
\begin{align} 
\label{eq:path_integral_C} 
\int_{\cL X_+} e^{(\Omega -z \cA - \lambda \hat{\mu})/z} 
& = \int_{\cL X_+} e^{-\pi \sum_{n=0}^\infty (n z + \lambda)|a_n|^2 } \bigwedge_{n=0}^\infty \frac{i}{2} d a_n \wedge d \overline{a_n}  \\ 
\nonumber 
& = \prod_{n=0}^\infty \frac{1}{\lambda + nz} \\
\nonumber  
& \sim \frac{1}{\sqrt{2\pi z}} z^{\lambda/z} \Gamma(\lambda/z) \quad \text{by $\zeta$-function regularization}. 
\end{align} 
This coincides with $\Pi^{\rm eq}_\C(0)$ appearing in Example \ref{exa:quantum_Cn} up to the factor $\sqrt{2\pi z}$. We can also `calculate' the push-forward by $\hat{\mu}$ of the equivariant measure $e^{(\Omega-z\cA)/z}$ on $\cL X_+$. We should have 
\begin{align*} 
\hat{\mu}_*\left( e^{(\Omega-z\cA)/z}\right) =\left( \int_{\cL X_+/\!/_t T} e^{(\Omega_{\rm red} -z \cA_{\rm red})/z}\right) dt
\end{align*} 
where $t$ is a coordinate on the real line $\R$, $\Omega_{\rm red}$ is the reduced symplectic form on $\cL X_+/\!/_t T = \hat{\mu}^{-1}(t)/T$ and $\cA_{\rm red}$ is the function induced by $\cA|_{\hat{\mu}^{-1}(t)}$. 
Note that $\cL X_+/\!/_t T$ is the infinite dimensional projective space $\PP^{\infty}$ with isolated $S^1_{\rm loop}$-fixed points, and the above integral can be formally evaluated via the $S^1_{\rm loop}$-equivariant localization. We thus have 
\begin{align*} 
\hat{\mu}_*\left( e^{(\Omega-z\cA)/z}\right) 
& = \sum_{n=0}^\infty e^{-nt} \frac{1}{\prod_{k\ge -n, k \neq 0} kz}  dt\\
& = \frac{1}{\prod_{k=1}^\infty kz} \sum_{n=0}^\infty e^{-nt} \frac{(-1)^n}{n! z^n}  dt \\ 
& \sim \sqrt{\frac{z}{2 \pi}} \exp( - e^{-t}/z) dt 
\end{align*} 
where we used the fact that the value of $\cA_{\rm red}$ at the $n$-th fixed point $[0,\dots,0,1,0,\dots]\in \PP^{\infty}$ (with $1$ at the $n$th position) equals $n t$. 
Up to the factor $\sqrt{\frac{z}{2\pi}}$, this coincides with $\Pi_{\pt}(W(e^{-t})) dt$ with $W(x) = x$ being the Landau-Ginzburg mirror of $\C$ and is related to \eqref{eq:path_integral_C} by the Fourier transformation (as in Example \ref{ex:quantum_toric}). It is interesting to note that this measure does not vanish for $t<0$ (although it is very small). 
\end{example}

\subsection{Shift operators for vector spaces} 
As we mentioned earlier, quantum cohomology $\QH_T^*(X)$ can be viewed as the semi-infinite cohomology $H^{\frac{\infty}{2}+*}_T(\widetilde{\cL X})$ of the universal covering $\widetilde{\cL X}$. 
Givental \cite{Givental:homological} proposed that the quantum $D$-module $\QDM_T(X)$ should be interpreted as the $S^1_{\rm loop}$-equivariant version: 
\[
\QDM_T(X) ``=" H^{\frac{\infty}{2}+*}_{T\times S^1_{\rm loop}}(\widetilde{\cL X}). 
\]
Shift operators can be understood from this Floer-theoretic viewpoint. 
Given a cocharacter $k \in \Hom(S^1, T)$, we have a map on the free loop space 
\[
\cL X \to \cL X, \quad \gamma(e^{i \theta}) \longmapsto k(e^{i\theta}) \gamma(e^{i \theta})    
\]
given by the pointwise action of the cocharacter. 
This induces operators on equivariant quantum cohomology $\QH_T^*(X)$ and equivariant quantum $D$-modules $\QDM_T(X)$:  
\begin{align*} 
S^k & \colon \QH_T^*(X) \to \QH_T^*(X)  & & \text{(Seidel representation)} \\ 
\bS^k & \colon \QDM_T(X) \to \QDM_T(X) && \text{(shift operator)} 
\end{align*} 
A rigorous definition of these operators are deferred to later sections. Here we give a heuristic calculation of shift operators for vector spaces. 

\begin{example} 
\label{exa:Floer_cycle_Cn}
Let $X$ be $\C^n$ and consider the diagonal $T=S^1$-action on $X$ again.  
The free loop space and its univeral cover are given by 
\[
\cL X= \left\{\sum_{m\in \Z} a_m e^{i m \theta} : a_m \in \C^n \right\} = \widetilde{\cL X} 
\]
The Floer fundamental cycle $\cL X_+ = \widetilde{\cL X}_+$ is given as follows: 
\[
\cL X_+ = \left\{ \sum_{m\ge 0} a_m e^{i m \theta} : a_m \in \C^n\right \}
\]
We view the class $[\cL X_+]$ an element of the semi-infinite cohomology $H^{\frac{\infty}{2}}_{T\times S^1_{\rm loop}}(\cL X)$; it corresponds to the identity class $1$ in $\QH_T^*(X)$ or in $\QDM_T(X)$. 
In this section, we work with the convention\footnote{This convention for the $S^1_{\rm loop}$-action is opposite to the one in the previous section \S\ref{subsec:FT_quantum_volume}. This results in formulae where the sign of $z$ is flipped: $J(\tau,-z)$ appeared in the definition of the quantum volume in \S\ref{subsec:FT_quantum_volume}, but the quantum differential equation in this section annihilates  $J(\tau,z)$.} that $T\times S^1_{\rm loop}$ acts on $\cL X$ as $\gamma(e^{i\theta}) \mapsto e^\lambda \gamma(e^{-z} e^{i\theta})$ with $(e^{\lambda}, e^z) \in T\times S^1_{\rm loop}$.

The cocharacter $k\in \Hom(S^1,T) \cong \Z$ acts on the loop space $\cL X$ as $\gamma(e^{i\theta}) \mapsto e^{k i \theta} \gamma(e^{i\theta})$. Therefore we compute the action of $\bS^k$ on $[\cL X_+]$ as 
\begin{align*} 
\bS^k [\cL X_+] 
& = \left[ e^{i k \theta} \cdot \cL X_+ \right] \\ 
& = [\cL X_+] \cap (a_0^i = \cdots = a_{k-1}^i =0: i=1,\dots,n) \\ 
& = \prod_{m=0}^{k-1} (\lambda - m z)^n \cdot [\cL X_+]   
\end{align*} 
when $k\ge 0$, where we use the fact that the $T\times S^1_{\rm loop}$-weight of $a_m^i$ is $\lambda - mz$. 
This suggests that the shift operator $\bS^k$ acts on $\QDM_T(\C^n) = H_T^*(\C^n)[z]=\C[\lambda,z]$ as 
\[
\bS^k  1 = \prod_{m=0}^{k-1} (\lambda - m z)^n. 
\]
More generally, we can define $\bS^k \colon \QDM_T(\C^n) \to \QDM_T(\C^n)_{\rm loc}$ for $k\in \Z$ as 
\begin{equation} 
\label{eq:action_Sk} 
\bS^k f(\lambda,z)  = \frac{\prod_{c=-\infty}^0 (\lambda + cz)^n}{\prod_{c=-\infty}^{-k} (\lambda +cz)^n} f(\lambda-kz, z) 
\end{equation} 
where the subscript `loc' means localization by nonzero elements of $\C[\lambda,z]$ and $f(\lambda,z) \in \C[\lambda,z]$. The shift operator satisfies the commutation relation 
\begin{equation} 
\label{eq:shift_comm_rel} 
\bS^k \circ \lambda = (\lambda - kz ) \circ \bS^k.  
\end{equation}

\begin{remark} 
The commutation relation \eqref{eq:shift_comm_rel} is related to the fact that the map $\cL X \to \cL X$, $\gamma(e^{i\theta}) \mapsto e^{i\theta} \gamma(e^{i\theta})$ is \emph{not} $(T\times S^1_{\rm loop})$-equivariant in the ordinary sense, but is equivariant with respect to the group automorphism 
\[
T\times S^1_{\rm loop} \to T \times S^1_{\rm loop}, \quad 
(e^\lambda, e^z) \mapsto (e^{\lambda + z}, e^z).  
\]
\end{remark} 

\begin{remark} 
Since $\cL X_+$ is cut out from $\cL X$ by the equations $a_{-1}^i = a_{-2}^i = a_{-3}^i = \cdots = 0$, we can regard the class $[\cL X_+]$ as the infinite product  
\[
[\cL X_+] = \prod_{m=1}^\infty (\lambda + m z)^n 
\]
and we can deduce the formula \eqref{eq:action_Sk} from this using \eqref{eq:shift_comm_rel}. The same infinite product appeared in Givental's heuristic argument \cite[Section 3]{Givental:homological} for the projective spaces $\PP^{n-1}$, where Givental considered the Floer fundamental cycle $[\widetilde{\cL \PP^{n-1}}_+]$ for $\PP^{n-1}$ instead of $[\cL X_+] = [\cL \C^n_+]$. 
\end{remark}

We have that $\bS 1 = \lambda^n$ on $\QDM_T(\C^n)$ and therefore: 
\begin{claim} 
$\QDM_T(\C^n)$ is a $\C[z]\langle \bS, \lambda \rangle$-module generated by 1 with the relations generated by $(\bS - \lambda^n) 1 =0$. 
\end{claim}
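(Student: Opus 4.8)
The plan is to identify $\QDM_T(\C^n)$ explicitly as a $\C[z]$-module and then read off the module structure over the noncommutative ring $\C[z]\langle \bS,\lambda\rangle$. Since $X=\C^n$ is equivariantly contractible, $H^*_T(\C^n)=\C[\lambda]$, and the equivariant $J$-function is $1$; hence $\QDM_T(\C^n)$ is the free $\C[z]$-module $\C[\lambda,z]$ with quantum connection trivial in the Novikov direction (there are no nonconstant curve classes). The equivariant parameter $\lambda$ acts by multiplication, and the shift operator $\bS=\bS^1$ acts by the formula \eqref{eq:action_Sk} with $k=1$, which on the generator $1$ gives $\bS\,1=\lambda^n$, and which satisfies the commutation relation \eqref{eq:shift_comm_rel}, i.e.\ $\bS\circ\lambda=(\lambda-z)\circ\bS$.

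First I would set up the surjection. Let $R=\C[z]\langle\bS,\lambda\rangle$ be the $\C[z]$-algebra generated by $\bS$ and $\lambda$ subject to $\bS\lambda=(\lambda-z)\bS$, and let $I\subset R$ be the left ideal generated by $\bS-\lambda^n$. Define the $R$-module map $\phi\colon R/I\to\QDM_T(\C^n)$ by $\phi(1)=1$. This is well defined because $\bS-\lambda^n$ kills $1$ in $\QDM_T(\C^n)$, and it is surjective because $1$ generates $\QDM_T(\C^n)=\C[\lambda,z]$ over $\C[\lambda]\subset R$ alone. The content of the claim is then that $\phi$ is injective, i.e.\ that $R/I$ is not ``too big.''

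For injectivity I would produce a $\C[z]$-module spanning set of $R/I$ of the right size. Using the commutation relation one moves every monomial in $R$ into the normal form $\sum_{a,b\ge 0} c_{a,b}(z)\,\lambda^a\bS^b$; then the relation $\bS\equiv\lambda^n$ (and its shifts $\bS\lambda^j\equiv(\lambda-z)^j\lambda^n\bS^{\,0}\cdot(\dots)$, obtained by repeatedly applying \eqref{eq:shift_comm_rel}) lets one rewrite any occurrence of $\bS$ in terms of $\lambda$'s, so that $R/I$ is spanned over $\C[z]$ by the powers $\{\lambda^a : a\ge 0\}$. Since $\phi$ sends $\lambda^a$ to the $\C[z]$-basis element $\lambda^a$ of $\QDM_T(\C^n)=\C[\lambda,z]$, these images are linearly independent, so the spanning set is in fact a basis of $R/I$ and $\phi$ is an isomorphism. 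The one point requiring a little care — and the main obstacle — is checking that the reduction of arbitrary elements of $R$ to the span of $\{\lambda^a\}$ is consistent (i.e.\ that $I$ really is exactly the kernel and we have not secretly collapsed too much): this is most cleanly handled by the dimension-counting argument just given, comparing ranks in each fixed $\C[z]$-degree, rather than by a direct Gröbner-style confluence check in the Ore extension $R$.

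Finally I would note that the same computation identifies the action of $\bS$ on the basis $\{\lambda^a\}_{a\ge 0}$ of $\QDM_T(\C^n)$ explicitly — $\bS\cdot\lambda^a=(\lambda-z)^a\lambda^n$ — which both confirms compatibility with \eqref{eq:action_Sk} and makes the presentation $\QDM_T(\C^n)\cong R/(R(\bS-\lambda^n))$ manifest. This recovers, in the Fourier-dual picture, the expected statement that $\bS$ plays the role of the Novikov variable $q$ of $\QH^*(\PP^{n-1})$ and $\lambda$ the role of the quantum connection $z\nabla_{q\partial_q}$, consistently with Table \ref{tab:expected_FT} and Example \ref{exa:quantum_Cn}.
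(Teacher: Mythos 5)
Your proposal is correct, and its starting point is exactly the paper's: the paper's entire argument in Example \ref{exa:Floer_cycle_Cn} is the heuristic computation $\bS\,1=\lambda^n$ via \eqref{eq:action_Sk} together with the commutation relation \eqref{eq:shift_comm_rel}, after which the claim is simply asserted. What you add, and what the paper leaves implicit, is the verification that the cyclic module $\C[z]\langle\bS,\lambda\rangle/\,I$ with $I=R(\bS-\lambda^n)$ maps \emph{isomorphically} (not just surjectively) onto $\QDM_T(\C^n)=\C[\lambda,z]$: your normal-form reduction $\lambda^a\bS^b\equiv\lambda^a(\lambda-(b-1)z)^n\bS^{b-1}\pmod I$ shows $\{\lambda^a\}_{a\ge0}$ spans $R/I$ over $\C[z]$, and since these map to a $\C[z]$-basis of $\C[\lambda,z]$, the surjection is injective, so no relations beyond $(\bS-\lambda^n)1=0$ can occur. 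This is a genuine strengthening of the paper's (deliberately heuristic) treatment rather than a different route; the only cosmetic slip is the garbled intermediate formula ``$\bS\lambda^j\equiv(\lambda-z)^j\lambda^n\bS^0\cdot(\dots)$'', which is harmless because your spanning-plus-independence argument carries the proof on its own, and your closing identity $\bS\cdot\lambda^a=\lambda^n(\lambda-z)^a$ is consistent with \eqref{eq:action_Sk}.
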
 
By the following non-commutative change of variables: 
\begin{equation} 
\label{eq:nc_change_var} 
\bS \mapsto q, \qquad \lambda \mapsto z q\parfrac{}{q} 
\end{equation} 
the relation $(\bS - \lambda^n)\cdot 1=0$ corresponds to the quantum differential equation for $\PP^{n-1}$. 
\[
\left( q - \left(z q\parfrac{}{q}\right)^n\right) \psi(q) = 0.  
\]
Thus we conclude:  
\begin{claim} 
\label{claim:Fourier_projective}
$\QDM_T(\C^n)$ is isomorphic to $\QDM(\PP^{n-1}) \cong \C[z]\langle q, z\parfrac{}{q} \rangle/\langle q- (z q\parfrac{}{q})^n \rangle$ by the Fourier transformation \eqref{eq:nc_change_var} . 
\end{claim}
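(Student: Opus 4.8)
The plan is to assemble Claim~\ref{claim:Fourier_projective} from two ingredients already in hand: the presentation of $\QDM_T(\C^n)$ as a cyclic module from the preceding claim, and the standard presentation of $\QDM(\PP^{n-1})$ via its small quantum differential equation. First I would make the source side precise. From the preceding claim, $\QDM_T(\C^n)$ is the quotient of the (noncommutative) ring $\C[z]\langle \bS,\lambda\rangle$, where $\bS$ and $\lambda$ satisfy the commutation relation \eqref{eq:shift_comm_rel}, namely $\bS\circ\lambda=(\lambda-z)\circ\bS$, by the left ideal generated by $\bS-\lambda^n$; the module is cyclic, generated by the image of $1$. One subtlety to address here is that \eqref{eq:action_Sk} a priori only defines $\bS^k$ into the \emph{localized} module $\QDM_T(\C^n)_{\rm loc}$ for $k<0$; so I would first check that $\bS$ itself (the case $k=1$) preserves the unlocalized module $\C[\lambda,z]$ and acts there simply by $f(\lambda,z)\mapsto\lambda^n f(\lambda-z,z)$, which is manifest from \eqref{eq:action_Sk} with $k=1$. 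This makes $\C[\lambda,z]$ a genuine $\C[z]\langle\bS,\lambda\rangle$-module and identifies it with the stated cyclic presentation by a degree/leading-term count: the relation $\bS\cdot 1=\lambda^n$ lets one rewrite any word in $\bS,\lambda$ applied to $1$ as a polynomial in $\lambda$, giving surjectivity, and comparing with the free rank shows there are no further relations.

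Second, I would record the target side. The small quantum cohomology of $\PP^{n-1}$ has the well-known presentation: $\QDM(\PP^{n-1})$ is the $\C[z][q]$-module with connection $z\nabla_{q\partial_q}=zq\partial_q+(p\,\cup_q)$ acting on $H^*(\PP^{n-1})\otimes\C[z][\![q]\!]$, and as a $D$-module it is cyclic, isomorphic to $\C[z]\langle q,zq\partial_q\rangle/\langle(zq\partial_q)^n-q\rangle$, with cyclic vector the fundamental class $1$. This is classical (it is exactly the statement that the small $J$-function $J_{\PP^{n-1}}(p\log q,z)$ displayed in Example~\ref{exa:quantum_Cn} is annihilated by $(zq\partial_q)^n-q$, which one verifies by applying the operator termwise to the series $\sum_d q^{d+p/z}/\prod_{k=1}^d(p+kz)^n$). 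So I may take this presentation as known.

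Third, the comparison itself. The assignment \eqref{eq:nc_change_var}, $\bS\mapsto q$ and $\lambda\mapsto zq\partial_q$, is a ring \emph{isomorphism} $\C[z]\langle\bS,\lambda\rangle\to\C[z]\langle q,zq\partial_q\rangle$: one only needs to check it is compatible with the defining relations on each side, i.e.\ that $q\circ(zq\partial_q)=(zq\partial_q-z)\circ q$ as operators on $\C[z][q,q^{-1}]$, which is the Leibniz identity $zq\partial_q(qf)=q\cdot zf+q\cdot zq\partial_q f$, matching $\bS\circ\lambda=(\lambda-z)\circ\bS$; the inverse assignment shows bijectivity. Under this isomorphism the relation ideal $\langle\bS-\lambda^n\rangle$ on the source maps to $\langle q-(zq\partial_q)^n\rangle$ on the target, so the quotient rings are isomorphic; since both modules are cyclic with the relation ideal as annihilator of the cyclic vector and the isomorphism sends cyclic vector to cyclic vector, the modules $\QDM_T(\C^n)$ and $\QDM(\PP^{n-1})$ are identified, compatibly with the connection in $z$ (which is untouched by the change of variables).

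I expect the main obstacle to be not the algebra above but the \emph{bookkeeping of which version of the quantum $D$-module} one is comparing: the excerpt flags a sign-of-$z$ convention switch between \S\ref{subsec:FT_quantum_volume} and \S\ref{subsec:shift operators}, and also that $\QDM_T(\C^n)$ for $\C^n$ noncompact requires the semiprojective/shift-operator framework of \cite{Iritani:shift} to even be well-defined, together with a possible completion in $q$ (the right-hand side naturally involves $\C[\![q]\!]$, while the left-hand side as written is $\C[\lambda,z]$ with $\bS$ acting as a shift, not obviously requiring completion). The careful part of the proof is therefore fixing, once and for all, the precise module categories (polynomial in $q$ vs.\ formal, localized vs.\ not, and the chosen sign of $z$) so that \eqref{eq:nc_change_var} is an isomorphism on the nose rather than after an implicit completion or localization; granting the earlier sections' definitions, this is a matter of matching conventions rather than new mathematics.
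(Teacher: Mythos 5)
Your proposal is correct and follows essentially the same route as the paper: there the claim is presented as a direct consequence of the preceding claim (the cyclic presentation of $\QDM_T(\C^n)$ by the relation $(\bS-\lambda^n)1=0$) together with the observation that the substitution \eqref{eq:nc_change_var} is compatible with the commutation relation \eqref{eq:shift_comm_rel} and carries that relation to the quantum differential equation of $\PP^{n-1}$. Your extra verifications (that \eqref{eq:nc_change_var} is a ring isomorphism, that the target presentation is the classical small quantum $D$-module of $\PP^{n-1}$, and the convention/completion bookkeeping) just make explicit what this heuristic section of the paper leaves implicit.
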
 
\end{example} 

\begin{example} 
\label{exa:GKZ}
The above example can be extended to a more general representation $\C^n$ of $T$. 
Suppose $T=(S^1)^l$ acts on $\C^n$ by the weights $D_1,\dots,D_n \in \Hom(T,S^1)$. 
We regard $D_i$ as an integral element of $\Lie(T)^* \cong \bigoplus_{j=1}^l \R \lambda_j$. 
We can calculate the shift operator similarly to the above example. The shift operator 
\[
\bS^k \colon \QDM_T(\C^n)=\C[\lambda_1,\dots,\lambda_l,z] \to \QDM_T(\C^n)_{\rm loc} 
\]
associated with $k\in \Hom(S^1,T)$ is given by 
\[
\bS^k = \left( \prod_{j=1}^n 
\frac{\prod_{c=-\infty}^{-D_j\cdot k} (D_j + cz)}{ \prod_{c=-\infty}^0 (D_j + cz)} \right) e^{-kz \partial_\lambda}. 
\]
Here $e^{-kz\partial_\lambda}$ is the operator sending $f(\lambda_1,\dots,\lambda_l,z)$ to $f(\lambda_1- k_1z,\dots,\lambda_l-k_lz, z)$. These operators satisfy the following relations: 
\[
\left( \prod_{i: D_i \cdot k >0} \prod_{c=0}^{D_i \cdot k-1} (D_i - cz)  - \bS^k \prod_{i: D_i \cdot k<0} 
\prod_{c=0}^{-D_i \cdot k -1} (D_i -c z) \right) 1 = 0.  
\]
By replacing $\bS^k$ with $q^k = q_1^{k_1} \cdots q_l^{k_l}$ and $\lambda_j$ with $z q_j \parfrac{}{q_j}$, we get from these relations the \emph{GKZ (Gelfand-Kapranov-Zelevinsky) differential system}, which is known to give the quantum differential equations of toric varieties $\C^n/\!/T$ \cite{Givental:toric_mirrorthm}. 
\end{example}

\section{Quantum $D$-modules and reduction conjecture}
\label{sec:2nd_lecture}

In this section, we give a rigorous definition of the shift (Seidel) actions on the equivariant quantum cohomology ($D$-modules) and state a reduction conjecture, which has been worked out in joint work \cite{Iritani-Sanda:reduction} with Fumihiko Sanda. Throughout this section, $X$ denotes a smooth projective variety with an algebraic $T_\C \cong (\C^\times)^l$-action. The main references are \cite{Iritani:monoidal, Iritani-Sanda:reduction}.

\subsection{Quantum $D$-modules}
Let $\NEN(X)\subset H_2(X,\Z)$ be the (Mori) monoid generated by classes of effective curves in $X$. 
We write $\C[\![Q]\!]$ for the \emph{Novikov ring}, a completion\footnote{More precisely, we consider a graded completion. See Remark \ref{rem:grading}.} of the monoid ring of $\NEN(X)$: 
\begin{align*} 
\C[\![Q]\!] & := \C[\![\NEN(X)]\!] \\ 
& = \left \{\sum_{d\in \NEN(X)} a_d Q^d : a_d \in \C\right\}. 
\end{align*} 
More generally, we can define $M[\![Q]\!]$ to be the set of formal power series $\sum_{d\in \NEN(X)} m_d Q^d$ with $m_d \in M$.  
The equivariant quantum cohomology ring is a deformation of the ring structure of $H^*_T(X)$ given by genus-zero Gromov-Witten invariants. It is a supercommutative and associative ring 
\[
\QH_T^*(X) = (H^*_T(X)[\![Q,\tau]\!], \star_\tau)  
\]
parametrized by the so-called \emph{bulk parameter} $\tau \in H^*_T(X)$. 
Let $\{\phi_i\}$ be a (finite) basis of $H^*_T(X)$ over $H^*_T(\pt)$ and 
$\{\phi_\alpha\}$ be an (infinite) basis of $H^*_T(X)$ over $\C$. 
We expand the bulk parameter $\tau$ in the following two ways:  
\[
\tau = \sum_i \tau^i \phi_i = \sum_\alpha \tau^\alpha \phi_\alpha.
\]
Then $\{\tau^i\}$ is an $H^*_T(\pt)$-linear coordinate system and $\{\tau^\alpha\}$ is a $\C$-linear coordinate system on $H^*_T(X)$. 
We will use both coordinate systems depending on purposes (see Remark \ref{rem:reason_infinitelymanycoordinates}); we distinguish them by Roman or Greek indices. 
The formal power series ring $\C[\![\tau]\!]$ means either $\C[\![\{\tau^i\}]\!]$ or $\C[\![\{\tau^\alpha\}]\!]$ depending on the context. 
The quantum product $\star_\tau$ is defined by the formula
\[
\alpha \star_\tau \beta = \sum_{i} \sum_{d \in \NEN(X)} \sum_{n\ge 0} 
\corr{\alpha,\beta,\tau,\cdots,\tau,\phi^i}_{0,n+2,d}^{X,T} \phi_i \frac{Q^d}{n!} 
\]
where $\{\phi^i\}\subset H^*_T(X)$ is a basis dual to $\{\phi_i\}$ with respect to the Poincar\'e pairing $(\phi_i,\phi^j) = \int_X \phi_i \cup \phi^j = \delta_i^j$. The correlators $\corr{\cdots}_{0,n,d}^{X,T}$ denote genus-zero, $n$ points, degree $d$, $T$-equivariant Gromov-Witten invariants of $X$ (see \cite{Cox-Katz, Givental:equivariant}). 

\begin{remark} 
\label{rem:grading} 
We define the degree of the variables as 
\[
\deg \tau^i = 2 - \deg \phi_i, \quad \deg \tau^\alpha = 2 - \deg \phi_\alpha, \quad 
\deg Q^d = 2 c_1(X) \cdot d. 
\]
We require that odd variables anticommute, e.g.~$\tau^i \tau^j = (-1)^{|i| |j|} \tau^j \tau^i$ (where $|i|= \deg\phi_i \mod 2$). 
For later purpose, we need to work with completions in the graded sense. For example, $H_T^*(X)[\![Q,\tau]\!]$ consists of finite sums of \emph{homogeneous} power series $\sum_{d,m}a_{d,m} Q^d \tau^m$ (where $\tau^m$ stands for a monomial of $\{\tau^i\}$ or of $\{\tau^\alpha\}$) with respect to the grading of $H^*_T(X)$ and the variable degrees defined above. 
\end{remark}  

Recall that the quantum $D$-module can be understood as the semi-infnite equivariant cohomology $H^{\frac{\infty}{2}}_{T\times S^1_{\rm loop}}(\widetilde{\cL X})$. It has the action of an additional equivariant parameter $z$ of $S^1_{\rm loop}$. We define the $T$-equivariant quantum $D$-module as 
\begin{equation} 
\label{eq:equiv_QDM} 
\QDM_T(X) := H^*_T(X)[z][\![Q,\tau]\!] 
\end{equation} 
as a module. It is equipped with the following flat connection (called the quantum connection): 
\begin{align*} 
\nabla_{\xi Q \partial_Q} & = \ovxi Q \partial_Q + z^{-1} (\xi\star_\tau) \\ 
\nabla_{\tau^\alpha} & = \partial_{\tau^\alpha} + z^{-1} (\phi_\alpha\star_\tau) \\ 
\nabla_{z\partial_z} & = z \partial_z - z^{-1} (E\star_\tau)  +  \mu 
\end{align*} 
where 
\begin{itemize} 
\item $\xi \in H^2_T(X)$, $\ovxi$ denotes the image of $\xi$ in $H^2(X)$ and $\ovxi Q\partial_Q$ is the derivation of the Novikov ring given by $(\ovxi Q\partial_Q) Q^d = (\ovxi \cdot d) Q^d$; 
\item $\{\tau^\alpha\}$ are infinitely many coordinates associated with a $\C$-basis $\{\phi_\alpha\}$ of $H^*_T(X)$; 
\item $E$ is the Euler vector field given by $E= c_1^T(X) + \sum_\alpha (1- \frac{\deg \phi_\alpha}{2}) \tau^\alpha \phi_\alpha$; 
\item $\mu\in \End_\C(H^*_T(X))$ is the grading operator given by $\mu(\alpha_\alpha) = (\frac{\deg\phi_\alpha}{2} - \frac{n}{2})\phi_\alpha$ with $n=\dim_\C X$. 
\end{itemize}   
Note that these operators define maps from $\QDM_T(X)$ to $z^{-1} \QDM_T(X)$. They are flat in the super sense: $\nabla_{\tau^\alpha} \nabla_{\tau^\beta} - (-1)^{|\alpha||\beta|} \nabla_{\tau^\beta} \nabla_{\tau^\alpha} = 0$ (the Novikov variable $Q$ and the equivariant parameter $z$ of $S^1_{\rm loop}$ have even degree and the covariant derivatives $\nabla_{\xi Q\partial_Q}$, $\nabla_{z\partial_z}$ in these directions commute.) 

\begin{remark} 
The connection in the $z$-direction comes from the homogeneity of Gromov-Witten invariants. The following combination of the connection operators measures the degree of elements of $\QDM_T(X)$, scaled by a factor of one-half and shifted by $-\frac{n}{2}$: 
\[
z \nabla_z + \sum_\alpha \frac{\deg \tau^\alpha}{2} \tau^\alpha \nabla_{\tau^\alpha} 
+ \nabla_{c_1^T(X) Q\partial_Q} 
= z\partial_z + \sum_\alpha \frac{\deg \tau^\alpha}{2} \tau^\alpha \partial_{\tau^\alpha} 
+c_1(X) Q \partial_Q + \mu.  
\]
\end{remark} 

\begin{remark} 
\label{rem:reason_infinitelymanycoordinates}
The grading operator $\mu$ is not linear over $H^*_T(\pt) = \C[\lambda_1,\dots,\lambda_l]$. Shift operators are not linear over $H^*_T(\pt)$ either. This is the reason why we need to work with infinitely many coordinates $\{\tau^\alpha\}$.  
\end{remark} 

\begin{remark} 
Since we are working with the $T$-equivariant version, the covariant derivative $\nabla_{\xi Q\partial_Q}$ in the $Q$-direction depends not only on the non-equivariant class $\ovxi\in H^2(X)$ but also on its equivariant lift $\xi \in H^2_T(X)$. 
When $\xi$ is a pure equivariant parameter in $H^2_T(\pt)$, then $\nabla_{\xi Q\partial_Q}$ equals the scalar multiplication by $z^{-1} \xi$. 
\end{remark} 

We also equip $\QDM_T(X)$ with a $z$-sesquilinear pairing $P$ 
\[
P(f,g) := \int_X f(-z) \cup g(z) 
\]
induced by the Poincar\'e pairing. The quantum $D$-module is self-dual in the sense that the pairing $P$ is flat with respect to the connection $\nabla$. 
\begin{align*} 
\ovxi Q\partial_Q P(f,g) & = P(\nabla_{\xi Q\partial_Q} f, g) + P(f, \nabla_{\xi Q\partial_Q} g) \\ 
\partial_{\tau^\alpha} P(f,g) & = P(\nabla_{\tau^\alpha} f, g) + (-1)^{|\alpha| |f|} 
P(f, \nabla_{\tau^\alpha} g) \\ 
\partial_{z\partial_z} P(f,g) & = P(\nabla_{z\partial_z} f, g) + P(f, \nabla_{z\partial_z} g) 
\end{align*} 

\subsection{Shift operators} 
As we explained earlier, the Seidel/shift operators are induced by the action of a cocharacter $k\in \Hom(S^1,T)$ on the free loop space by pointwise multiplication 
\[
\cL X \to \cL X, \qquad \gamma(e^{i \theta}) \mapsto k(e^{i\theta}) \gamma(e^{i \theta}). 
\]
However, this action does not canonically lift to the universal covering $\widetilde{\cL X}$ of $\cL X$. Since the quantum cohomology can be viewed as a semi-infinite cohomology for the universal cover $\widetilde{\cL X}$, we need to choose a lift of the action to $\widetilde{\cL X}$. We have the exact sequence: 
\[
\begin{CD}
0 @>>> H_2(X,\Z) @>>> H_2^T(X,\Z) @>>> H_2^T(\pt,\Z) @>>> 0 
\end{CD} 
\]
Here the leftmost group $H_2(X,\Z)$ is isomorphic to $\pi_1(\cL X)$ (under the assumption that $X$ is simply-connected) and acts on $\widetilde{\cL X}$ by deck transformations and gives rise to the Novikov variables on quantum cohomology; the rightmost group $H_2^T(\pt, \Z) \cong \Hom(S^1,T)$ acts on $\cL X$ by pointwise multiplication as above. As it turns out, the middle group $H_2^T(X,\Z):=H_2(X_T,\Z)$ acts on the universal cover $\widetilde{\cL X}$ in a compatible way. Therefore we should have an action of $H_2^T(X,\Z)$ on quantum cohomolgoy ($D$-modules): this combines the Novikov variables with the Seidel representation. 

In this paper, we will restrict to the action of algebraic classes. We have the following exact sequence (see \cite[Lemma 2.2]{Iritani:monoidal}): 
\begin{equation} 
\label{eq:exact_seq_N1}
\begin{CD} 
0 @>>> N_1(X) @>>> N_1^T(X) @>>> H_2^T(\pt,\Z) @>>> 0 
\end{CD} 
\end{equation} 
where $N_1(X)\subset H_2(X,\Z)$ is the subgroup generated by the classes of algebraic curves in $X$ and  its equivariant version $N_1^T(X) \subset H_2^T(X,\Z)$ is the subgroup generated by algebraic classes in fibres of the Borel construction $X_T = (X\times ET)/T \to BT$ and the images of the push-forward maps $\sigma_* \colon H_2^T(\pt) \to H_2^T(X)$ associated with $T$-fixed points $\sigma$ of $X$. We shall define the shift operator associated with $\beta \in N_1^T(X)$. 

For $k\in \Hom(S^1, T)$, we define the \emph{Seidel space} (or fibration) $E_k$ as 
\[
E_k := E_k(X) := X\times (\C^2 \setminus \{0\})/(x, (v_1,v_2)) \sim (s^k x, (s v_1,s v_2)), \ s \in \C^\times 
\]
where $s^k = k(s) \in T$. The projection to the second factor endows $E_k$ with the structure of an $X$-fibration over $\PP^1$: 
\begin{equation} 
\label{eq:Seidel_fibration}
\begin{aligned}  
\xymatrix{
X \ar@{^{(}->}[r] &  E_k \ar[d] \\ 
& \PP^1 
} 
\end{aligned} 
\end{equation} 
We obtain the Seidel space by gluing the two trivial $X$-bundles over the 2-discs: 
\[
E_k \cong (X\times D^2_0) \cup_{\varphi_k} (X\times D^2_\infty) 
\]
where the gluing map $\varphi_k \colon X \times \partial D^2_0 \to X\times \partial D^2_\infty$ is given by $\varphi(x,e^{i\theta}) = (k(e^{-i\theta}) x, e^{-i\theta})$. 
A holomorphic section of the Seidel fibration $E_k \to \PP^1$ is therefore given by a pair of holomorphic maps $u_0 \colon D^2_0 \to X$, $u_\infty \colon D^2_\infty \to X$ whose boundary loops are related by the action of $k^{-1}$, i.e.~$k(e^{i \theta})^{-1} u_0(e^{i\theta})=u_\infty(e^{-i \theta})$. This is a rough reason why counting holomorphic sections of $E_k$ is related to the action of $k^{-1}$ on quantum (Floer) cohomology. 

We may view the Seidel fibration $E_k$ \eqref{eq:Seidel_fibration} as a finite-dimensional truncation of the Borel construction \eqref{eq:Borel_construction}. We have the following fibre square (in the category of topological spaces and continuous maps): 
\[
\xymatrix{
E_k \ar[r]^{\tilde{f}_k} \ar[d] & X_T \ar[d]\\ 
\PP^1 \ar[r]^{f_k} & B T 
}
\]
where $f_k \colon \PP^1 \to BT$ is given as the composition $\PP^1 \subset \PP^{\infty} = BS^1 \xrightarrow{Bk} BT$. Observe that a holomorphic section of $E_k \to \PP^1$ defines an equivariant homology class in $H_2^T(X,\Z) = H_2(X_T,\Z)$ via the map $\tilde{f}_k$, which lies over the class $k=[f_k] \in H_2(BT,\Z) = H_2^T(\pt,\Z) \cong \Hom(S^1,T)$. Actually it also lies in $N_1^T(X)$. Therefore we get an identification 
\begin{equation} 
\label{eq:lying_over_k} 
N_1^{\rm sec}(E_k) \cong \{\text{classes in $N_1^T(X)$ lying over $k\in H_2^T(\pt,\Z)$}\}  
\end{equation} 
where $N_1^{\rm sec}(E_k) \subset N_1(E_k)$ consists of curve classes in $E_k$ that map to the fundamental class $[\PP^1]$ under $E_k \to \PP^1$ (i.e.~section classes of $E_k$). It is convenient to parametrize section classes of $E_k$ for various $k$ by elements of the group $N_1^T(X) \subset H_2^T(X)$. 

Set $\hT = T \times S^1$ and $\hT_\C = T_\C \times \C^\times$. We define the $\hT_\C$ action on $E_k$ as 
\[
(e^\lambda, e^z) [ x, (v_1,v_2)] = [e^\lambda x, (v_1, e^z v_2)]. 
\]
Let $X_0$, $X_\infty$ be the fibres of $E_k \to \PP^1$ at $[1,0]$, $[0,1]$ respectively. These fibres are preserved by the $\hT_\C$-action; $\hT_\C$ acts on $X_0, X_\infty$ as follows: 
\begin{align*} 
(e^\lambda, e^z) \cdot x & = e^\lambda \cdot x  && \text{for $x\in X_0$;} \\
(e^\lambda, e^z) \cdot x & = e^{\lambda- kz} \cdot x && \text{for $x\in X_\infty$.} 
\end{align*} 
The identity map $X_0 \cong X_\infty$ is equivariant with respect to the group automorphism $\hT_\C \to \hT_\C$, $(e^\lambda, e^z) \mapsto (e^{\lambda+kz},e^z)$, and hence induces a map 
\[
\Phi_k \colon H_{\hT}^*(X_0) \to H_\hT^*(X_\infty) 
\] 
such that 
\[
\Phi_k (f(\lambda,z) \alpha) = f(\lambda-kz, z) \Phi_k(\alpha) 
\]
for all $\alpha \in H^*_\hT(X_0)$ and $f(\lambda,z) \in H^*_T(\pt) = \C[\lambda,z]=\C[\lambda_1,\dots,\lambda_l,z]$. 

Finally we define the shift operators. It is defined in terms of $\hT$-equivariant Gromov-Witten invariants of the Seidel spaces $E_k$. 

\begin{definition}[{\cite[Definition 2.4]{Iritani:monoidal}}] 
Let $\beta \in N_1^T(X)$. Write $\ovbeta\in H_2^T(\pt,\Z)$ be the image of $\beta$ in $H_2^T(\pt,\Z)$ and set $k := -\ovbeta$. We define the $H^*_\hT(\pt)$-linear homomorphism depending formally on $\tau$ and $Q$
\[
\tbS^\beta \colon H_\hT^*(X_0) \to H^*_\hT(X_\infty) 
\]
by the formula 
\[
\left(\tbS^\beta c_0, c_\infty\right)_{H^*_\hT(X_\infty)} = 
\sum_{d\in N_1(X), n\ge 0} 
\corr{i_{0*} c_0, i_{\infty*} c_\infty, \htau,\dots,\htau}_{0,n+2,-\beta + d}^{E_k,\hT} 
\frac{Q^d}{n!} 
\]
where $c_0\in H_\hT^*(X_0)$, $c_\infty \in H_\hT^*(X_\infty)$, $(\cdot,\cdot)_{H^*_{\hT}(X_\infty)}$ denotes the equivariant Poincar\'e pairing on $H^*_\hT(X_\infty)$, $i_0\colon X_0 \to E_k$, $i_\infty \colon X_\infty \to E_k$ are the natural inclusions, and $\htau\in H^*_T(E_k)$ is the unique class such that $i_0^*\htau = \tau$ and $i_\infty^* \htau = \Phi_k(\tau)$ (with $\tau\in H^*_T(X)$ being the bulk parameter).  Since the class $-\beta + d\in N_1^T(X)$ lies over $k\in H_2^T(\pt,\Z)$, it represents a section class of $E_k$ (see \eqref{eq:lying_over_k}). Then we define the endomorphism\footnote{See \eqref{eq:shift_operator_codomain} below for the precise (co)domains of $\hbS^\beta$.}  
\[
\hbS^\beta := \Phi_k^{-1} \circ \tbS^\beta \colon H_\hT^*(X_0)\to H^*_\hT(X_0).  
\]
This saisfies $\hbS^\beta (f(\lambda,z) c) = f(\lambda - \ovbeta z, z) \hbS^\beta(c)$.  
Note also that $H^*_\hT(X_0) = H^*_T(X)[z]$. 
\end{definition} 

We remark about the range of summation in $d\in N_1(X)$ appearing in the definition of $\tbS^\beta$. We only need to take summation over $d$ such that $-\beta + d$ corresponds to an effective section class. Let us study when $-\beta+d$ represents an effective class. 
There is a unique $k(\C^\times)$-fixed component $F_{\rm max}(k) \subset X$ of $X$ such that the normal bundle of $F_{\rm max}(k)$ has only negative $k(\C^\times)$-weights. We call $F_{\rm max}(k)$ the \emph{maximal component}. 
Each point $x\in F_{\rm max}(k)$ defines a section $\{x\} \times \PP^1 \subset E_k$; the homology class of such a  section is called the maximal section class and is denoted by $\sigma_{\rm max}(k) \in N_1^{\rm sec}(E_k) \subset N_1^T(X)$. Then the set $\NEN^{\rm sec}(E_k)$ of effective section classes is given by (see \cite[Lemma 2.3]{Iritani:monoidal}) 
\[
\NEN^{\rm sec}(E_k) = \sigma_{\rm max}(k) + \NEN(X).  
\]
Hence we only need to consider $d\in N_1(X)$ such that $-\beta + d \in \sigma_{\rm max}(k) + \NEN(X)$ with $k=-\ovbeta$. Therefore, the shift operator defines a map 
\begin{equation} 
\label{eq:shift_operator_codomain} 
\hbS^\beta \colon \QDM_T(X) \to Q^{\beta+ \sigma_{\rm max}(-\ovbeta)} \QDM_T(X)
\end{equation} 
where $\QDM_T(X) = H^*_T(X)[z][\![Q,\tau]\!]$ as in the previous section. Since the map $\tau \mapsto \htau$ is not linear over $H_T^*(\pt)$, we need to work with the infinitely many bulk parameters $\{\tau^\alpha\}$ associated with a $\C$-basis $\{\phi_\alpha\}$ of $H^*_T(X)$. 

\begin{remark} 
We have the properties $\hbS^0 =\id$, $\hbS^{\beta_1} \circ \hbS^{\beta_2} = \hbS^{\beta_1+\beta_2}$ of the Seidel representation. If $\beta \in N_1(X)$, then we have $\hbS^\beta = Q^\beta$. (These properties will follow from the intertwining properties we explain in the next section.)  Therefore the operators $\hbS^\beta$ combines the shift action of $\Hom(S^1,T)$ with the Novikov variables. The shift operators associated with $k\in \Hom(S^1,T)$ can be defined by choosing a splitting of the exact sequence \eqref{eq:exact_seq_N1}. A traditional (non-linear) splitting is given by $\sigma_{\rm max}(k)$ (the `maximal section class'), see \cite{McDuff-Tolman}. 
\end{remark} 

\subsection{Intertwining properties} 
\label{subsec:intertwining} 
We introduce a standard fundamental solution $M(\tau)\in \End_{H^*_T(\pt)}(H^*_T(X))[\![z^{-1}]\!][\![Q,\tau]\!]$ for the quantum connection as follows:  
\[
(M(\tau) \phi_1,\phi_2) = (\phi_1,\phi_2) + \sum_{d\in \NEN(X), n\ge 0} 
\corr{\phi_1, \tau,\dots,\tau,\frac{\phi_2}{z-\psi}}_{0,n+2,d}^{X,T} \frac{Q^d}{n!},  
\]
where $\phi_1,\phi_2 \in H_T^*(X)$ and $(\cdot,\cdot)$ is the $T$-equivariant Poincar\'e pairing on $H^*_T(X)$. 
This is a famous fundamental solution appearing in \cite{Dubrovin:2D, Givental:equivariant}. 
In the above formula, $1/(z-\psi)$ should be expanded in the geometric series 
\begin{equation} 
\label{eq:geometric_series} 
\frac{1}{z-\psi} = \sum_{n=0}^\infty \frac{\psi^{n+1}}{z^n} 
\end{equation}
and $\psi$ stands for the $\psi$-class\footnote{
Gromov-Witten invariants involving the $\psi$-class are called \emph{gravitational descendants}. } on the moduli space of stable maps, i.e.~the equivariant first Chern class of the universal cotangent line bundle at the last marked point. 
Therefore the coefficient of a monomial $Q^d \tau^{i_1} \cdots \tau^{i_k}$ in $M(\tau)\phi_1$ is an element of $H^*_T(X)[\![z^{-1}]\!]$. 

On the other hand, the virtual localization formula \cite{Graber-Pandharipande} implies that each coefficient is also a rational function  $\lambda_1,\dots,\lambda_l, z$, i.e.~
\[
M(\tau) \in \End_{H^*_T(\pt)}(H^*_T(X))\otimes_{H^*_T(\pt)} \C(\lambda,z)_{\rm hom}[\![Q,\tau]\!].  
\]
Here we denote by $\C(\lambda,z)_{\rm hom}$ the localization of $H^*_\hT(\pt) = \C[\lambda,z] = \C[\lambda_1,\dots,\lambda_l, z]$ by non-zero homogeneous elements. 
This follows from the fact that $1/(z-\psi)$ restricts, on each $T$-fixed component of the moduli space, to a rational class of the form 
\[
\frac{1}{z- \alpha - \text{(nilpotent class)}} = \sum_{n\ge 0} \frac{(\text{nilpotent class})^n}{(z-\alpha)^{n+1}}
\]
for some $\alpha \in H^2_T(\pt)$. 

The preceding discussion shows that the fundamental solution $M(\tau)$ takes values in the space $H_\hT^*(X)_{\rm loc} := H^*_T(X)\otimes_{H_T^*(\pt)} \C(\lambda,z)_{\rm hom}$. We introduce the shift operator on this space 
\[
\hcS^\beta \colon H_\hT^*(X)_{\rm loc}[\![Q]\!] \to Q^{\beta + \sigma_{\rm max}(-\ovbeta)} H_\hT^*(X)_{\rm loc}[\![Q]\!]
\] 
as follows: for $\bbf \in H^*_\hT(X)_{\rm loc}$ and a $T$-fixed component $F$ of $X$, we define
\begin{equation} 
\label{eq:def_hcS} 
(\hcS^\beta \bbf)_F= Q^{\beta+ \sigma_F(-\ovbeta)} 
\left( \prod_\alpha 
\prod_j 
\frac{\prod_{c\le 0} \rho_{F,\alpha,j}+ \alpha + cz}{\prod_{c\le -\alpha \cdot \ovbeta} \rho_{F,\alpha,j}+ \alpha + cz}\right)  e^{-z \ovbeta\partial_\lambda} \bbf_F
\end{equation} 
where 
\begin{itemize} 
\item $(\hcS^\beta \bbf)_F$, $\bbf_F$ denote the restriction of $\hcS^\beta \bbf$, $\bbf$ to $F$ respectively; 
\item $\alpha$ ranges over $T$-weights of the normal bundle $\cN_{F/X}$ $($which we identify with elements of $H^2_T(\pt,\Z)$$)$; we decompose the normal bundle into $T$-eigenbundle as $\cN_{F/X} = \bigoplus_\alpha \cN_{F/X,\alpha}$, where $T$ acts on $\cN_{F/X,\alpha}$ via the character $\alpha$; then $\rho_{F,\alpha,j}$, $j=1,\dots, \rank \cN_{F,\alpha}$ are the Chern roots of $\cN_{F,\alpha}$; 
\item for $k\in \Hom(S^1,T)$, 
$\sigma_F(k)\in N_1^T(X)$ denotes the section class of $E_k$ associated with the fixed locus $F\subset X$; it lies over $k\in H^2_T(\pt,\Z) \cong \Hom(S^1,T)$; 
\item $e^{-z \ovbeta \partial_\lambda}$ is the shift operator on $H^*_\hT(F)_{\rm loc} = H^*(F)\otimes \C(\lambda,z)_{\rm hom}$ sending $g(\lambda_1,\dots,\lambda_l,z)$ to $g(\lambda_1-z \ovbeta_1,\dots,\lambda_l - z \ovbeta_l, z)$, where $\ovbeta_i = \lambda_i \cdot \ovbeta$.  
\end{itemize}

\begin{proposition}[\cite{Givental:equivariant,Pandharipande:afterGivental, CCIT:MS, Iritani:shift}]  
\label{prop:intertwining_property} 
The fundamental solution $M(\tau)$ satisfies the following: 
\begin{align*} 
M(\tau) \circ \nabla_{\xi Q\partial_Q} & = (\ovxi Q\partial_Q + z^{-1} \xi) \circ M(\tau) \\ 
M(\tau) \circ \nabla_{\tau^\alpha} & = \partial_{\tau^\alpha} \circ M(\tau) \\ 
M(\tau) \circ \nabla_{z\partial_z} & = (z\partial_z - z^{-1} c_1^T(X) + \mu_X) \circ M(\tau) \\
M(\tau) \circ \hbS^\beta & =  \hcS^\beta \circ M(\tau) 
\end{align*} 
for $\xi \in H^2_T(X)$, $\beta \in N_1^T(X)$, where $\xi$ and $c_1^T(X)$ on the right-hand side are regarded as operators acting on $H^*_T(X)$ by the cup product. 
\end{proposition}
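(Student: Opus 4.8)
The plan is to prove the first three intertwining relations as the standard fundamental-solution properties of the quantum connection, deducing them directly from the genus-zero Gromov--Witten axioms, and to reduce the fourth relation --- the genuinely new statement --- to an $\hT$-equivariant localization computation on the Seidel space $E_k$. For the relation $M(\tau)\circ\nabla_{\tau^\alpha}=\partial_{\tau^\alpha}\circ M(\tau)$, I would differentiate the defining correlator $(M(\tau)\phi_1,\phi_2)$ in the direction $\tau^\alpha$, which produces one additional insertion $\phi_\alpha$, and then apply the genus-zero topological recursion relation at the last marked point (carrying the $\psi$-class) together with the splitting axiom; this rewrites the derivative as $M(\tau)$ composed with quantum multiplication by $\phi_\alpha$. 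The relation in the Novikov direction follows by combining this with the divisor equation: the derivation $\ovxi Q\partial_Q$ accounts for the $Q$-grading of the coefficients of $M(\tau)$, while the extra $z^{-1}\xi$ on the right is the $\psi$-correction produced when a divisor $\xi$ is inserted near the last marked point, using that $\nabla_{\xi Q\partial_Q}$ depends on $\xi$ through $\ovxi$ and $\xi$ itself as recorded above. The relation in the $z$-direction is forced by homogeneity: the virtual-dimension constraint on the correlators defining $M(\tau)$ becomes an Euler-field grading identity whose residual terms are precisely $-z^{-1}c_1^T(X)$ (from the degree of $\psi$, equivalently the divisor equation for $c_1^T(X)$) and $\mu_X$. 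All of this is classical, following Dubrovin, Givental and Pandharipande, and I would only organize the bookkeeping.

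For the fourth relation, set $k=-\ovbeta$ and work on the Seidel space $E_k\to\PP^1$ with its $\hT=T\times S^1$-action, where the $S^1$ rotates the base $\PP^1$ with fixed points $0,\infty$ and fixed fibres $X_0\cong X\cong X_\infty$. I would introduce the fundamental solution $M^{E_k}(\htau)$ of the quantum connection of $E_k$ and evaluate the two-point descendant generating function underlying $\tbS^\beta$ by $\hT$-equivariant localization with respect to this $S^1$. A $\C^\times$-fixed stable map in a section class is a chain whose end components map into the fibres $X_0$ and $X_\infty$ and whose middle is a multiple cover of a fixed section $\{x\}\times\PP^1$ over a $T$-fixed point $x$ in a component $F\subset X$; summing the vertex contributions over $0$ reassembles $M^X(\tau)$, those over $\infty$ reassemble $M^X(\Phi_k(\tau))$ and hence $\Phi_k\circ M^X(\tau)$, the edge contributions produce the shift $e^{-z\ovbeta\partial_\lambda}$ and the prefactor $Q^{\beta+\sigma_F(-\ovbeta)}$, and the normal-bundle Euler classes along $F$ assemble into exactly the infinite-product ratios $\prod_\alpha\prod_j\bigl(\prod_{c\le 0}(\rho_{F,\alpha,j}+\alpha+cz)\bigr)\big/\bigl(\prod_{c\le -\alpha\cdot\ovbeta}(\rho_{F,\alpha,j}+\alpha+cz)\bigr)$ of \eqref{eq:def_hcS}. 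Reading the same localized sum in the opposite order --- first the section count of $E_k$, which is $\tbS^\beta$ and then $\hbS^\beta=\Phi_k^{-1}\circ\tbS^\beta$, and then the fixed-fibre descendants, which is $M^X$ --- yields the two sides of $M(\tau)\circ\hbS^\beta=\hcS^\beta\circ M(\tau)$. Equivalently, this is a ``Seidel-bundle mirror theorem'': since $E_k$ is a toric-type $\PP^1$-construction over $X$, the fundamental solution of $E_k$ restricted to the fixed fibres is the explicit hypergeometric modification of $M^X$ encoded by $\hcS^\beta$, which can also be obtained by applying Givental's toric mirror theorem fibrewise.

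The main obstacle I expect is the precise matching in the localization on $E_k$. One must control the decomposition $\NEN^{\rm sec}(E_k)=\sigma_{\rm max}(k)+\NEN(X)$ and, more delicately, the way the $\psi$-class expansion defining $M^{E_k}$ distributes over the localization graph: it is here that the topological recursion relation must re-enter, so that the fixed-fibre contributions genuinely reassemble into $M^X$ rather than some other descendant series. A second subtlety is the regularization of the formally infinite products $\prod_{c\le 0}$, which acquire meaning only in the ratio $(\hcS^\beta\bbf)_F$, where they reduce to the finite products $\prod_{c=-\alpha\cdot\ovbeta+1}^{0}(\rho_{F,\alpha,j}+\alpha+cz)$ or their reciprocals; one must check that the localization output lands in $H^*_\hT(X)_{\rm loc}=H^*_T(X)\otimes_{H^*_T(\pt)}\C(\lambda,z)_{\rm hom}$ so that these expressions are legitimate. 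Finally, since $\tau\mapsto\htau$ is not $H^*_T(\pt)$-linear, the whole computation must be carried out in the infinitely many coordinates $\{\tau^\alpha\}$, and one must verify that the resulting identity is independent of that choice.
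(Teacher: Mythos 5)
Your proposal follows essentially the same route as the paper: the first three identities are the standard fundamental-solution properties from the literature (Givental, Pandharipande, CCIT), and the fourth is established by virtual localization on the Seidel space $E_{-\ovbeta}$, with $\hcS^\beta$ arising precisely as the contribution of the fixed sections over $T$-fixed components $F$ --- which is exactly the one-line argument the paper records before citing \cite{BMO:Springer, Maulik-Okounkov} and \cite{Iritani:shift}. One small inaccuracy: since $\beta$ is a section class, the component of a fixed stable map dominating $\PP^1$ is the invariant section itself (degree one over the base), not a multiple cover, but this does not affect the structure or outcome of your localization argument.
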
 
\begin{proof} 
For the first three properties, see e.g.~\cite[\S1]{Givental:equivariant}, \cite[Proposition 2]{Pandharipande:afterGivental}, \cite[Proposition 3.1]{CCIT:MS}. 
For the intertwining property with shift operators, we refer the reader to \cite{BMO:Springer, Maulik-Okounkov}, \cite[Theorem 3.14]{Iritani:shift}. 
This property can be easily proved by virtual localization. The operator $\hcS^\beta$ comes from the localization contribution from $T$-fixed sections of $E_{-\ovbeta}$. 
\end{proof}

\begin{remark} The shift operator $\hbS^\beta$ is intertwined with the simple but still nontrivial operator  $\hcS^\beta$ by the fundamental solution $M(\tau)$. The equivariant $\hGamma$-class further trivializes the latter shift operator $\hcS^\beta$: see \cite{Okounkov-Pandharipande:Hilbert, Iritani:shift}. The hypergeometric factors in \eqref{eq:def_hcS} arise from the equivariant $\hGamma$-class via the difference equation $\Gamma(1+x) = x \Gamma(x)$ of the $\Gamma$-function. 
\end{remark} 

\subsection{Givental cone and $J$-function}
\label{subsec:Givental_cone_J-function}
For later purposes, we introduce the \emph{equivariant Givental cone} $\cL_X^{\rm eq}$ and its distinguished slice called the \emph{$J$-function}. 
The Givental cone \cite{Coates-Givental, Givental:symplectic} is a geometric realization of the quantum $D$-module: it is an infinite-dimensional Lagrangian submanifold of Givental's symplectic space $\cH^X = H^*_T(X)(\!(z^{-1})\!)[\![Q]\!]$ defined as the graph of the genus-zero descendant Gromov-Witten potential.  

We equip Givental's symplectic space $\cH^X = H^*_T(X)(\!(z^{-1})\!)[\![Q]\!]$ with the following symplectic form: 
\[
\Omega(f,g) =- \Res_{z=-\infty} \left( \int_X f(-z) g(z) \right) dz 
\]
where $f,g\in \cH^X$. 
We have the decomposition $\cH^X=\cH_+ \oplus \cH_-$ into the following isotropic subspaces: 
\begin{align*} 
\cH_+ = H^*_T(X)[z][\![Q]\!], \qquad 
\cH_- =z^{-1} H^*_T(X)[\![z^{-1}]\!][\![Q]\!]. 
\end{align*}  
We identify $\cH_-$ with the dual space of $\cH_+$ via the symplectic form, and regard $\cH^X$ as the total space of the cotangent bundle $T^*\cH_+$. The genus-zero descendant Gromov-Witten potential $\cF_X$ defines a function on the formal neighbourhood of $z$ in $\cH_+$ and its differential defines the following function $d\cF_X \colon \cH_+ \to T^*\cH_+ \cong \cH^X$:  
\[
d\cF_X(z+ \bt(z)) = z + \bt(z) + \sum_i  \sum_{\substack{n\ge 0, d\in \NEN(X)\\ d=0 \, \Rightarrow\, n\ge 2}} \phi^i \corr{\bt(-\psi),\dots,\bt(-\psi),\frac{\phi_i}{z-\psi}}^{X,T}_{0,n+1,d} \frac{Q^d}{n!} 
\]
where $z+\bt(z)$ represents a point in the formal neighbourhood of $z$ in $\cH_+$ and $\bt(z) = \sum_{n=0}^\infty t_n z^n\in \cH_+$ with $t_n\in H^*_T(X)$. (In this article, we flip the sign of $z$ from the original definition \cite{Coates-Givental, Givental:symplectic}.) By expanding $1/(z-\psi)$ in the geometric series \eqref{eq:geometric_series}, we see that $d\cF_X(z+\bt(z))$ lies in $\cH^X$. The equivariant Givental cone $\cL_X^{\rm eq}$ is defined to be a formal submanifold of $\cH^X$ consisting of points $d \cF_X(z+\bt(z))$ with $\bt(z) \in \cH_+$. See \cite[Appendix B]{CCIT:computing} for a definition of $\cL_X^{\rm eq}$ as a formal scheme over $\C[\![Q]\!]$. (When $T=\{1\}$, we have the non-equivariant Givental cone, denoted by $\cL_X$.) 

Similarly to the discussion of the fundamental solution $M(\tau)$ in \S\ref{subsec:intertwining}, the virtual localizaiton formula implies that the Givental cone $\cL_X^{\rm eq}$ is contained in the rational form of the Givental space 
\[
\cH_{\rm rat}^X = H^*_\hT(X)_{\rm loc}[\![Q]\!] 
\]
where $H^*_\hT(X)_{\rm loc} = H^*_T(X)\otimes_{H^*_T(\pt)} \C(\lambda,z)_{\rm hom}$ as before. 
We have natural inclusions 
\[
\cH_{\rm rat}^X \hookrightarrow H^*_T(X)_{\rm loc}(\!(z^{-1})\!)[\![Q]\!] \hookleftarrow \cH^X 
\]
with $H^*_T(X)_{\rm loc} = H^*_T(X)\otimes_{H_T^*(\pt)} \C(\lambda)_{\rm hom}$ (where the left inclusion is given by the Laurent expansion at $z=\infty$) 
and $\cL_X^{\rm eq}$ is contained in the intersection $\cH_{\rm rat}^X \cap \cH^X$.

A remarkable fact is that \emph{the Givental cone can be described as the union of semi-infinite linear subspaces} (the images of the fundamental solution $M(\tau)$): 
\begin{equation} 
\label{eq:overruled} 
\cL_X^{\rm eq} = \bigcup_{\tau \in H^*(X)[\![Q]\!]} z M(\tau) \cH_+.   
\end{equation} 
This property, being referred to as ``overruled'', says that $\cL_X^{\rm eq}$ can be understood as a geometric realization of the quantum $D$-module. The tangent space of $\cL_X^{\rm eq}$ at any point in $z M(\tau) \cH_+$ equals 
\[
T_\tau = M(\tau)\cH_+. 
\]
We may identify this finite-dimensional family of tangent spaces with the quantum $D$-module. 
The intertwining properties in \S\ref{subsec:intertwining} shows that these tangent spaces $T_\tau$ are invariant under derivations $z \ovxi Q\partial_Q + \xi$, $z\partial_{\tau^\alpha}$, $z^2 \partial_z$ (corresponding to the quantum connection) and under the shift operators $\hcS^\beta$ up to localization by a monomial in $Q$. 
Since $\hcS^\beta$ is defined in terms of the localization to fixed loci (see \eqref{eq:def_hcS}), it is highly nontrivial that the operator $\hcS^\beta$ preserves the Givental cone $\cL_X^{\rm eq}$ and its tangent spaces. 
The invariance under shift operators gives a strong constraint on $\cL_X^{\rm eq}$.

Using the fundamental solution $M(\tau)$, we can introduce the (big equivariant) \emph{$J$-function} as follows: 
\[
J_X(\tau,z) := M(\tau) 1.  
\]
The identity class $1\in \QDM_T(X)$ is a standard generator of the quantum $D$-module, and hence we may view $J_X(\tau,z)$ as a realization of the generator as a function (or a solution of the quantum $D$-module). Also, $\tau \mapsto z J_X(\tau,z)$ gives a finite-dimensional family of elements on the Givental cone $\cL_X^{\rm eq}$. It coincides with $\tau \mapsto d\cF_X (z+\tau)$. 
The $J$-function can be expanded as follows: 
\begin{align*} 
J_X(\tau,z) & = 1 + \sum_{d\in \NEN(X),n\ge 0} \sum_i \corr{1,\tau,\dots,\tau,\frac{\phi^i}{z-\psi}}_{0,n+2,d}^{X,T} \phi_i \frac{Q^d}{n!} \\ 
& = e^{\delta/z} \left(1+ \frac{\tau'}{z} + \sum_{d\in \NEN(X), n \ge 0} \sum_i 
\corr{\tau',\dots,\tau', \frac{\phi^i}{z(z-\psi)}}_{0,n+1,d}^{X,T} \phi_i \frac{Q^d e^{\delta \cdot d}}{n!} \right) 
\end{align*} 
where $\tau = \delta + \tau'$ with $\delta \in H^2_T(X)$, $\tau' \in \bigoplus_{k\neq 2} H^k_T(X)$ and we used the String and Divisor equations in the second line.

\subsection{Equivariant ample cone and the K\"ahler moduli space} 
\label{subsec:global_Kaehler_moduli}
In this section and hereafter, we assume that the generic stabilizer of the $T$-action on $X$ is finite, so that the stable locus is nonempty for some stability conditions. 
Recall from \eqref{eq:shift_operator_codomain} that $\hbS^\beta$ defines a map from $\QDM_T(X)$ to $Q^{\beta+\sigma_{\rm max}(-\ovbeta)} \QDM_T(X)$. Therefore, 
$\hbS^\beta$ preserves $\QDM_T(X)$ if $\beta \in -\sigma_{\rm max}(-\ovbeta)+\NEN(X)$.   
In view of this, we define the ``equivariant Mori monoid'' as 
\[
\NEN^T(X) := \NEN(X) + \left\langle -\sigma_{\rm max}(-k) : k \in H_2^T(\pt,\Z)\right\rangle_\N  \subset N_1^T(X). 
\]
For $\beta \in \NEN^T(X)$, $\hbS^\beta$ preserves $\QDM_T(X)$ and thus $\NEN^T(X)$ acts on $\QDM_T(X)$. 
We have the following: 
\begin{proposition}[{\cite[Proposition 2.9]{Iritani:monoidal}}] 
\label{prop:QDMT_module_over} 
When the generic stabilizer of the $T$-action on $X$ is finite, $\QDM_T(X)$ has the structure of a $\C[z][\![\NEN^T(X)]\!]$-module. 
\end{proposition}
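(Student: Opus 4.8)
The argument is largely a formal assembly of facts already recorded above, the one delicate point being the passage to the completion. First I would check that $\hbS^\beta$ maps $\QDM_T(X)$ into itself for every $\beta\in\NEN^T(X)$. By definition $\NEN^T(X)=\NEN(X)+\langle -\sigma_{\rm max}(-k):k\in H_2^T(\pt,\Z)\rangle_\N$, so every element is a finite sum of elements of the two generating families, and since $\hbS^{\beta_1}\circ\hbS^{\beta_2}=\hbS^{\beta_1+\beta_2}$ it is enough to treat the generators. For $\beta=d\in\NEN(X)$ one has $\hbS^d=Q^d$, which maps $\QDM_T(X)$ into $Q^d\QDM_T(X)\subset\QDM_T(X)$. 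For $\beta=-\sigma_{\rm max}(-k)$ the image $\ovbeta$ of $\beta$ in $H_2^T(\pt,\Z)$ equals $k$, because $\sigma_{\rm max}(-k)$ lies over $-k$; hence $\beta+\sigma_{\rm max}(-\ovbeta)=-\sigma_{\rm max}(-k)+\sigma_{\rm max}(-k)=0$ and \eqref{eq:shift_operator_codomain} gives $\hbS^\beta(\QDM_T(X))\subset Q^0\QDM_T(X)=\QDM_T(X)$. A composite of operators each preserving $\QDM_T(X)$ again does so, so $\hbS^\beta$ preserves $\QDM_T(X)$ for all $\beta\in\NEN^T(X)$; evaluating on $1\in\QDM_T(X)$ also shows $\beta+\sigma_{\rm max}(-\ovbeta)\in\NEN(X)$ for every such $\beta$.

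Next, since $\hbS^0=\id$ and each $\hbS^\beta$ is $\C[z]$-linear — the special case $f\in\C[z]$ of $\hbS^\beta(f(\lambda,z)c)=f(\lambda-\ovbeta z,z)\hbS^\beta c$, under which $\C[z]$ is fixed — the assignment $Q^\beta\mapsto\hbS^\beta$ is a homomorphism from $\NEN^T(X)$ into the monoid of $\C[z]$-linear endomorphisms of $\QDM_T(X)$, and it extends $\C[z]$-linearly to an action of the uncompleted monoid ring $\C[z][\NEN^T(X)]$ on $\QDM_T(X)$.

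Finally I would upgrade this to the completed monoid ring $\C[z][\![\NEN^T(X)]\!]$. For a fixed $m\in\QDM_T(X)$ one has $\hbS^\beta(m)\in Q^{\beta+\sigma_{\rm max}(-\ovbeta)}\QDM_T(X)$ with $\beta+\sigma_{\rm max}(-\ovbeta)\in\NEN(X)$, so the action is continuous provided that only finitely many $\beta\in\NEN^T(X)$ have $\beta+\sigma_{\rm max}(-\ovbeta)$ of bounded $(Q,\tau)$-adic order (equivalently, bounded $c_1(X)$-degree); granting this, every series $\sum_\beta a_\beta(z)Q^\beta\in\C[z][\![\NEN^T(X)]\!]$ acts on $m$ by a convergent sum $\sum_\beta a_\beta(z)\hbS^\beta(m)\in\QDM_T(X)$. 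This last finiteness is the crux of the proof: it amounts to a lower bound for the $c_1(X)$-degree of $\beta+\sigma_{\rm max}(-\ovbeta)$ in terms of the ``size'' of $\beta$, hence to understanding the growth of the maximal section classes $\sigma_{\rm max}(-k)$ and a structural description of the monoid $\NEN^T(X)$ (that it is a graded monoid with finite-dimensional graded pieces). It is precisely at this point that the hypothesis of a finite generic stabilizer is used, as it ensures nonempty stable loci — hence well-defined maximal components $F_{\rm max}(k)$ and section classes $\sigma_{\rm max}(k)$ — and makes $\NEN^T(X)$ into a graded monoid for which the completion $\C[z][\![\NEN^T(X)]\!]$, and therefore the continuity argument above, is meaningful.
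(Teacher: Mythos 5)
Your first half is fine and is essentially all the argument this paper itself offers (the proof proper is deferred to \cite[Proposition 2.9]{Iritani:monoidal}): the codomain statement \eqref{eq:shift_operator_codomain} together with the definition of $\NEN^T(X)$, plus the reduction to the generators $\NEN(X)$ and $-\sigma_{\rm max}(-k)$ via $\hbS^{\beta_1}\circ\hbS^{\beta_2}=\hbS^{\beta_1+\beta_2}$ (a property the paper also only asserts, in the Remark following the definition of the shift operators). The problem is in your third step, which you yourself call the crux and then do not prove: the convergence of $\sum_\beta a_\beta(z)\hbS^\beta(m)$ for a general element of the completed monoid ring is exactly the point where the hypothesis is needed, and your account of \emph{how} the hypothesis enters is incorrect. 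The maximal component $F_{\rm max}(k)$ and the section class $\sigma_{\rm max}(k)$ are defined for every cocharacter $k$ of any algebraic torus action on a smooth projective $X$ (Bia{\l}ynicki-Birula); they require no stability hypothesis whatsoever, so ``well-definedness of $F_{\rm max}$ and $\sigma_{\rm max}$'' is not what the finite-stabilizer assumption buys.

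What the assumption actually guarantees, as the paper says at the start of \S\ref{subsec:global_Kaehler_moduli}, is that the stable locus is nonempty for some linearization, i.e.\ that the $T$-ample cone $C_T(X)\subset N^1_T(X)_\R$ is nonempty and full-dimensional. By the Lemma stated right after the Proposition, $\NE^T(X)^\vee=\overline{C_T(X)}$, so this is equivalent to $\NE^T(X)$ being strictly convex (pointed). Pointedness is what makes the graded completion $\C[z][\![\NEN^T(X)]\!]$ a well-defined ring and supplies the finiteness you ask for: choosing $\homega$ in the interior of $C_T(X)$, one has $\homega\cdot\beta>0$ for all nonzero $\beta\in\NEN^T(X)$, and the completion and the action are controlled by the $\homega$-filtration (cf.\ Remark \ref{rem:modification_completion}), not by the ``$c_1(X)$-degree'' you invoke — indeed your claimed equivalence of $(Q,\tau)$-adic order with $c_1(X)$-degree fails whenever $X$ is not Fano, since $c_1^T(X)\cdot\beta$ need not be positive on $\NEN^T(X)$. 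To see that the hypothesis cannot be dropped, note that if a nontrivial subtorus of $T$ acted trivially on $X$, then for a nonzero cocharacter $k$ of that subtorus both $-\sigma_{\rm max}(-k)$ and $-\sigma_{\rm max}(k)$ lie in $\NEN^T(X)$ and sum to zero (both have trivial $X$-component), so the monoid contains a full sublattice, $\NE^T(X)$ is not pointed, and neither the completed monoid ring nor the infinite sums defining its would-be action make sense. So the structure of your argument is the right one, but the step you label ``granting this'' is precisely the content of \cite[Proposition 2.9]{Iritani:monoidal}, and the mechanism you propose for using the hypothesis would not close it.
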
 

Via the operators $\nabla_{\xi Q\partial_Q}$ with $\xi\in H^2_T(X)$, we can view $\QDM_T(X)$ as a flat connection (or $D$-module) over the space $\Spec \C[z][\![\NEN^T(X)]\!]$. This is a consequence of the commutation relations \cite[Corollary 2.11]{Iritani:monoidal}: 
\[
[z\nabla_{\xi Q\partial_Q}, \hbS^\beta] = z (\xi \cdot \beta) \hbS^\beta 
\] 
that follow directly from the intertwining property in Proposition \ref{prop:intertwining_property}.  

\begin{remark} 
The shift operator $\hbS^\beta$ is homogeneous of degree $2 c_1^T(X) \cdot \beta$. The completed mononid ring $\C[z][\![\NEN^T(X)]\!]$ should be understood as the graded completion with respect to this degree. See also Remark \ref{rem:grading}. 
\end{remark} 

Let $\NE^T(X) := \R_{\ge 0} \cdot \NEN^T(X) \subset N_1^T(X)_\R$ be the cone generated by $\NEN^T(X)$. It turns out that the dual cone of $\NE^T(X)$ is the closure of the $T$-ample cone $C_T(X)\subset N^1_T(X)_\R$ introduced by Dogachev-Hu \cite{Dolgachev-Hu:VGIT} and Thaddeus \cite{Thaddeus:GIT}. 
\begin{lemma}[\cite{Iritani-Sanda:reduction}] 
The dual cone of $\NE^T(X) =\R_{\ge 0} \cdot \NEN^T(X)$ is the closure of the $T$-ample cone 
\[
C_T(X) := \left\langle c_1^T(L) : 
\begin{array}{l} 
\text{$L \to X$ is a $T$-equivariant ample line bundle} \\
\text{whose stable locus $X^{\rm st}(L)$ is nonempty} 
\end{array} 
\right \rangle_{\R_{\ge 0}}.   
\]
\end{lemma}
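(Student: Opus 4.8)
The plan is to show that the two closed convex cones $(\NE^T(X))^\vee$ and $\overline{C_T(X)}$ in $N^1_T(X)_\R$ both coincide with the explicit cone
\[
\mathcal K \;:=\; \bigl\{\,\xi\in N^1_T(X)_\R \;:\; \bar\xi\in\overline{\mathrm{Nef}}(X)\ \text{ and }\ 0\in\Delta_\xi\,\bigr\},
\]
where $\bar\xi\in N^1(X)_\R$ is the image of $\xi$ in non-equivariant cohomology and, writing $w_F(\xi)\in\Lie(T)^*$ for the $T$-weight of $\xi$ along a fixed component $F\subset X^T$ (linear in $\xi$), $\Delta_\xi:=\mathrm{conv}\{w_F(\xi):F\subset X^T\}$; for an ample equivariant line bundle $\Delta_\xi$ is the moment polytope by Atiyah--Guillemin--Sternberg, and under the standing assumption (finite generic stabiliser) it is full-dimensional.

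For the first equality, I use the generating description $\NEN^T(X)=\NEN(X)+\langle-\sigma_{\rm max}(-k):k\in\Hom(S^1,T)\rangle_\N$, so that $\xi$ lies in $(\NE^T(X))^\vee$ iff (a)~$\langle\xi,d\rangle\ge0$ for all $d\in\NEN(X)$, and (b)~$\langle\xi,\sigma_{\rm max}(m)\rangle\le0$ for all cocharacters $m$ (put $m=-k$). Since the pairing of $H^2_T(X)$ with the image of $H_2(X)$ in $H_2^T(X)$ factors through the fibre inclusion $X\hookrightarrow X_T$, condition (a) says exactly $\bar\xi\in\overline{\mathrm{Nef}}(X)$. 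For (b), the key input is the identity $\langle c_1^T(L),\sigma_F(m)\rangle=-\langle w_F(L),m\rangle$ for the Seidel section class $\sigma_F(m)$ of $E_m$ through a point $x\in F$: the section $\{x\}\times\PP^1\subset E_m$ meets $F$ only at $x$, and there the natural extension of $L$ to $E_m$ restricts to the character by which $m(\C^\times)$ acts on $L|_F$. Now $F_{\rm max}(m)$, whose normal bundle has only negative $m$-weights, is precisely the locus where the linear form $\langle\,\cdot\,,m\rangle$ attains its maximum on $\Delta_\xi$; hence $\langle\xi,\sigma_{\rm max}(m)\rangle=-\max_{p\in\Delta_\xi}\langle p,m\rangle$, so (b) becomes $\max_{p\in\Delta_\xi}\langle p,m\rangle\ge0$ for every $m$, which for the compact convex polytope $\Delta_\xi$ is equivalent to $0\in\Delta_\xi$. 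Therefore $(\NE^T(X))^\vee=\mathcal K$.

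For the second equality, I recall the Kempf--Ness/Kirwan description of GIT quotients (the Hilbert--Mumford criterion) \cite{Kirwan:coh_quotient}: for a $T$-equivariant ample line bundle $L$ one has $X^{\rm ss}(L)\ne\emptyset\iff0\in\Delta_L$, and --- because $\Delta_L$ is full-dimensional under the standing assumption --- $X^{\rm st}(L)\ne\emptyset\iff0\in\mathrm{int}\,\Delta_L$. Consequently every generator $c_1^T(L)$ of $C_T(X)$ lies in $\mathcal K$, so $C_T(X)\subseteq\mathcal K$ and hence $\overline{C_T(X)}\subseteq\mathcal K$. Conversely, $C_T(X)$ contains every \emph{rational} $\xi$ with $\bar\xi$ ample and $0\in\mathrm{int}\,\Delta_\xi$ (clear denominators to obtain an ample $L$ with nonempty stable locus); since ``$\bar\xi$ ample and $0\in\mathrm{int}\,\Delta_\xi$'' is an open condition, $\overline{C_T(X)}$ contains all \emph{real} such $\xi$. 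Finally, for arbitrary $\xi\in\mathcal K$ fix an ample $L_0$ with $X^{\rm st}(L_0)\ne\emptyset$ and set $\eta=c_1^T(L_0)$, so $0\in\mathrm{int}\,\Delta_\eta$; then $\bar\xi+\epsilon\bar\eta$ is ample and $\Delta_{\xi+\epsilon\eta}=\Delta_\xi+\epsilon\Delta_\eta\supseteq\epsilon\Delta_\eta$ (as $0\in\Delta_\xi$), whose interior contains $0$, so $\xi+\epsilon\eta\in\overline{C_T(X)}$ for all $\epsilon>0$ and $\xi=\lim_{\epsilon\to0^+}(\xi+\epsilon\eta)\in\overline{C_T(X)}$. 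Thus $\overline{C_T(X)}=\mathcal K=(\NE^T(X))^\vee$.

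The step I expect to be the main obstacle is the identity $\langle c_1^T(L),\sigma_F(m)\rangle=-\langle w_F(L),m\rangle$ (and the resulting $\langle\xi,\sigma_{\rm max}(m)\rangle=-\max_{p\in\Delta_\xi}\langle p,m\rangle$): one must verify that the natural extension of $L$ along $E_m\to X_T$ carries no additive twist beyond the fixed-point weight, and fix the sign --- this is essentially the content of \cite[Lemma 2.3]{Iritani:monoidal} together with the section-class bookkeeping of \cite{McDuff-Tolman}. The remaining ingredients --- the dictionary between nonempty stable loci and the interior of the moment polytope, and the identification of $C_T(X)$ with the variation-of-GIT ample cone of Dolgachev--Hu and Thaddeus \cite{Dolgachev-Hu:VGIT,Thaddeus:GIT} --- are routine.
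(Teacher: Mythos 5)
The paper does not actually prove this lemma: it is quoted from the joint work \cite{Iritani-Sanda:reduction}, which is cited but not reproduced here, so there is no in-paper argument to compare yours against. Judged on its own, your strategy --- identifying both $(\NE^T(X))^\vee$ and $\overline{C_T(X)}$ with the cone of lifts $\xi$ of nef classes whose weight polytope $\Delta_\xi$ contains $0$, via (i) the pairing of $c_1^T(L)$ with Seidel section classes and (ii) the Hilbert--Mumford/Kirwan dictionary between (semi)stability and $0\in\Delta_L$ (resp.\ $0\in\mathrm{int}\,\Delta_L$) --- is the natural route and is essentially sound; the sign question you flag for $\langle c_1^T(L),\sigma_F(m)\rangle=-\langle w_F(L),m\rangle$ is indeed just a convention check (the restriction of the induced bundle on $E_m$ to the section $\{x\}\times\PP^1$ is $\cO_{\PP^1}(\pm\langle w_F(L),m\rangle)$, and the orientation is forced if the lemma is to hold at all).

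Two steps, however, are asserted in more generality than they are true and need an explicit supporting observation. First, the equivalence of condition (b) with ``$\max_{p\in\Delta_\xi}\langle p,m\rangle\ge 0$ for all $m$'' rests on the identity $\langle w_{F_{\rm max}(m)}(\xi),m\rangle=\max_F\langle w_F(\xi),m\rangle$, which holds when $\bar\xi$ is ample (moment-map picture) and hence, by continuity, nef --- but not for arbitrary $\xi$ (take $\bar\xi$ anti-ample); you should say you are using condition (a) here. Second, the Minkowski identity $\Delta_{\xi+\epsilon\eta}=\Delta_\xi+\epsilon\Delta_\eta$ is false for general equivariant classes (one only has $\mathrm{conv}\{a_F+b_F\}\subseteq\mathrm{conv}\{a_F\}+\mathrm{conv}\{b_F\}$); it is true in your situation precisely because the support function $m\mapsto\max_F\langle w_F(\xi),m\rangle=\langle w_{F_{\rm max}(m)}(\xi),m\rangle$ is \emph{linear} in $\xi$ over lifts of the nef cone, the maximizing component $F_{\rm max}(m)$ being independent of $\xi$. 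The same remark is what makes your set $\mathcal K$ convex (an intersection of half-spaces over the nef region), which you tacitly use when passing from ``every generator $c_1^T(L)$ lies in $\mathcal K$'' to $C_T(X)\subseteq\mathcal K$. With these points made explicit (and the standard fact that, under the finite-generic-stabilizer hypothesis, a generic orbit closure has full moment polytope, so $X^{\rm st}(L)\neq\emptyset\iff 0\in\mathrm{int}\,\Delta_L$ properly referenced), your argument goes through.
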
  

A symplecto-geometric picture of the $T$-ample cone $\overline{C_T(X)}$ is as follows. Under the natural projection $\pi \colon N^1_T(X)_\R \to N^1(X)_\R$, $\overline{C_T(X)}$ maps to the ample cone $\overline{\Amp(X)} \subset N^1(X)_\R$. The fibre $P_\omega := \overline{C_T(X)} \cap \pi^{-1}(\omega)$ at an ample class $\omega \in \Amp(X)$ can be identified with the moment polytope (the image of the moment map $\mu \colon X \to \Lie(T)^*$) associated with the $T$-action and $\omega$. 
See Figure \ref{fig:T-ample_cone}. 

\begin{figure}[t] 
\centering 
\begin{tikzpicture} 
\draw[->] (-2,-2) -- (3,-2); 
\draw[->] (-1.5,-1.5) -- (-1.5,2.7);
\draw (-1.5,3) node {$H^2_T(\pt,\R)$}; 
\draw (4,-2) node {$N^1(X)_\R$}; 
 \draw[very thick] (-1.5,-2) -- (3,-2); 
\draw (0.3,-2.8) node {$\Amp(X)$};  
 
\draw (-1.5,0) -- (1.5,2.5); 
\draw (-1.5,0) -- (2.1,1.8); 
\draw (-1.5,0) -- (2.4,0.65); 
\draw (-1.5,0) -- (2.1,-0.6); 
\draw (-1.5,0) -- (1.5,-1.5); 
\filldraw (-1.5,0) circle [radius=0.05];  
\filldraw (-1.5,-2) circle [radius = 0.05]; 

\filldraw (0.5,-2) circle [radius =0.05]; 
\draw[thick] (0.5,-1) -- (0.5,1.66); 
\draw (-1.5,-2.3) node {$0$}; 
\draw (0.5, -2.3) node {$\omega$}; 

\draw (1.5,2) node {$C_1$}; 
\draw (1.5,1) node {$C_2$}; 
\draw (1.5,0) node {$C_3$}; 
\draw (1.5,-1) node {$C_4$}; 

\draw (3.5,1) node {$C_T(X)$}; 

\draw[thick, shade, right color=gray,left color=white] (7,2.5) -- (8,2) -- (8.5,1) -- (8.5,0) -- (8,-1) -- (7,-1.5); 

\filldraw (8,2) circle [radius=0.05];
\filldraw (8.5,1) circle [radius =0.05]; 
\filldraw (8.5,0) circle [radius =0.05]; 
\filldraw (8,-1) circle [radius =0.05]; 

\draw (8.4,2) node {$p_1$}; 
\draw (8.9,1) node {$p_2$}; 
\draw (8.9,0) node {$p_3$}; 
\draw (8.4,-1) node {$p_4$}; 

\draw (7.5,0.5) node {$\frM$}; 

\end{tikzpicture} 
\caption{A decomposition of the $T$-ample cone $\overline{C_T(X)}$ and the associated ``toric variety'' $\frM$ (global K\"ahelr moduli space). The torus-fixed point $p_i \in \frM$ corresponding to the cone $C_i$ can be interpreted as the large-radius limit point of the GIT quotient $Y_i = X/\!/_i T$.}
\label{fig:T-ample_cone}
\end{figure}
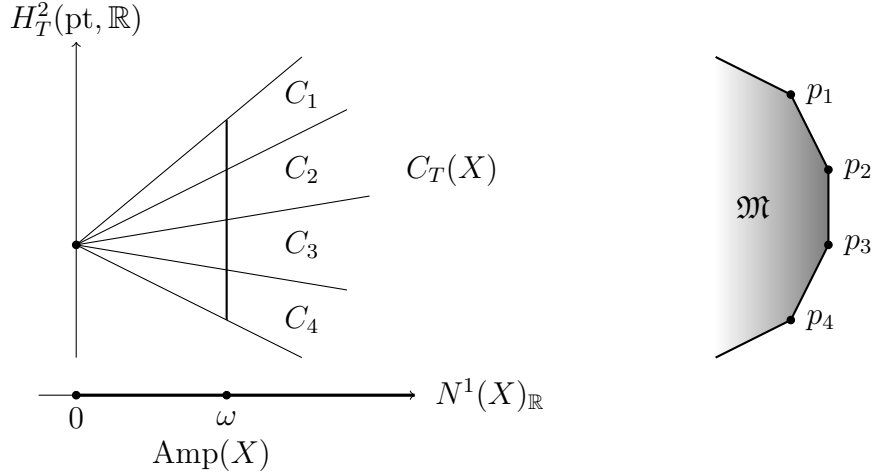

The cone $\overline{C_T(X)}$ has a wall-and-chamber structure such that each chamber corresponds to a different GIT quotient of $X$. The set of $\homega \in C_T(X)$ such that the stability is equivalent to the semistability is a finite union of open subcones $C_1,\dots,C_N$, and each $C_i$ corresponds to a GIT quotient $Y_i=X/\!/_iT$ with at worst orbifold singularities. Then we have $\overline{C_T(X)} = \bigcup_{i=1}^N \overline{C_i}$. The closed cones $\overline{C_i}$ define a fan in a generalized sense: $\overline{\Amp(X)}$ is not a rational polyhedral cone in general, but $\overline{C_T(X)}$ and $\overline{C_i}$ are cut out by finitely many inequalities (over $\Q$) in $\pi^{-1}(\overline{\Amp(X)})$. The dual cone $C_i^\vee$ is generated by $\ovNE(X)=\Amp(X)^\vee$ and finitely many rational vectors. Note that $C_i^\vee$ contains the ``equivariant Mori cone'' $\NE^T(X)$. 

Suppose for simplicity that $H_2(X,\Z)$ has no torsion. Alternatively, we can work modulo torsions, i.e.~we replace $\NEN(X)\subset N_1(X)$, $\NEN^T(X)\subset N_1^T(X)$ with their images in $H_2(X,\Z)/{\rm tor}$, $H_2^T(X,\Z)/{\rm tor}$ and work with the completed monoid rings (Novikov ring) associated with them (in this case, the definition of $\QDM_T(X)$ \eqref{eq:equiv_QDM} changes accordingly). We define $C_{i,\N}^\vee\subset N_1^T(X)$ to be the intersection of the cone $C_i^\vee$ with the lattice $N_1^T(X)$.  Through the fan structure on $\overline{C_T(X)}$, the spectra of the monoid rings of $C_{i,\N}^\vee$ glue to a `toric variety' 
\[
\frM = \bigcup_i \Spec \C[C_{i,\N}^\vee].  
\]
Note that $C_{i,\N}^\vee$ is generated by $\ovNEN(X) := \ovNE(X) \cap N_1(X)$ and finitely many integral classes in $N_1^T(X)$ (where $\ovNEN(X)$ can be strictly bigger than $\NEN(X)$). Therefore $\frM$ is a finite type scheme over $\Spec \C[\ovNEN(X)]$.  

We expect that the space $\frM$ plays the role of a `global K\"ahler moduli space' for GIT quotients $X/\!/T$.  
Recall that the equivariant quantum $D$-module $\QDM_T(X)$ is a module over $\C[z][\![\NEN^T(X)]\!]$ by Proposition \ref{prop:QDMT_module_over}. By the extension $\C[z][\![\NEN^T(X)]\!] \subset \C[z][\![C_{i,\N}^\vee]\!]$ of scalars, we can pull-back $\QDM_T(X)$ to a sheaf over a (formal) subscheme of $\frM\times \A^1_z$: 
\begin{equation} 
\label{eq:formal_subscheme} 
\bigcup_i \Spec \C[z][\![C_{i,\N}^\vee]\!]  \subset \frM \times \A^1_z. 
\end{equation} 
We present the following conjecture, which is currently stated informally. A more rigorous formulation of the conjecture, in terms of solutions of quantum $D$-modules, will be provided in the next section \S\ref{subsec:reduction_conjecture}. 

\begin{conjecture} 
\label{conj:reduction_QDM} 
$\QDM_T(X)$ pulled back to the chart $\Spec(\C[z][\![C_{i,\N}^\vee]\!])$ is related to the quantum $D$-module of the corresponding GIT quotient $Y_i = X/\!/_i T$, after certain modifications and completions. 
\end{conjecture}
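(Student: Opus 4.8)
The plan is to deduce Conjecture~\ref{conj:reduction_QDM} from a single Fourier-transform identity between the equivariant Givental cone $\cL_X^{\rm eq}$ and the Givental cone $\cL_{Y_i}$ of $Y_i = X/\!/_i T$, and then to transport the remaining $D$-module data across that identity. First I would reduce everything to the cone: by the overruled description \eqref{eq:overruled} the pair $(\QDM_T(X),\nabla)$ is encoded in the germ of $\cL_X^{\rm eq}$, and by Proposition~\ref{prop:intertwining_property} the shift operators $\hbS^\beta$ are encoded in the explicit hypergeometric operators $\hcS^\beta$ of \eqref{eq:def_hcS} acting on $\cH_{\rm rat}^X$. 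So it is enough to (a)~produce a ``Fourier transform'' $\FT$ on the rational Givental space which, after restriction to the GIT chamber $C_i$ and completion at the corresponding large-radius point $p_i\in\frM$, carries $\cL_X^{\rm eq}$ onto a modification of $\cL_{Y_i}$; and (b)~check that under $\FT$ the operator $\hcS^\beta$ becomes multiplication by the Novikov monomial $q^\beta$ of $Y_i$ while the equivariant parameter $\lambda_j$ becomes the quantum connection $z\nabla_{q_j\partial_{q_j}}$ of $Y_i$. Part~(b) is essentially forced: the commutation relations $[\lambda_j,\hbS^k]=z\delta_{jk}\hbS^k$ match $[z\nabla_{q_j\partial_{q_j}},q_k]=z\delta_{jk}q_k$, and the grading operator $\mu$ together with the Euler field pin down the $z\partial_z$-direction; so the real content is part~(a).

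For~(a) I would first prove Conjecture~\ref{conj:reduction} of \S\ref{subsec:reduction_conjecture}: that Givental's $I$-function of $Y_i$ is the discrete Fourier transform of the equivariant $J$-function $J_X(\tau,z)$, i.e.\ a Jeffrey--Kirwan sum of Mellin--Barnes residues. In the vector-space and toric cases this is exactly the computation of Examples~\ref{exa:quantum_Cn},~\ref{exa:GKZ} and Theorem~\ref{thm:equivariant_oscint}. In general, I would run virtual localization on the Seidel spaces $E_k$ (whose $\hT$-fixed section contributions already produce $\hcS^\beta$, cf.\ the proof of Proposition~\ref{prop:intertwining_property}), close the Mellin--Barnes contour in the direction dictated by the chamber $C_i$, and collect the residues at $\lambda=0$ of the fixed-locus contributions $\int_F e^{\homega}/e_T(\cN_F)$; the residue sum, read off the ``window'' of characters selected by $C_i$, should reproduce the $I$-function of $Y_i$, with wall-crossing between the $C_i$ realized by contour deformation as in Remark~\ref{rem:JK_res}. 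This step simultaneously yields the monoid inclusion $\NEN^T(X)\hookrightarrow C_{i,\N}^\vee$ and hence the base change $\Spec\C[z][\![C_{i,\N}^\vee]\!]$ appearing in the statement.

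Next I would upgrade this $J$-/$I$-function match to an isomorphism of $D$-modules. Since $\cL_{Y_i}$ is overruled, it is generated from any single one of its points --- in particular from the $I$-function --- by Birkhoff factorization (Coates--Givental \cite{Coates-Givental}), so a cone-valued $\FT$ that matches $J$-functions and is compatible with the Novikov-module structure automatically matches all of $\cL_{Y_i}$, hence the big quantum product and all bulk deformations $\tau$; transporting $\nabla$ then follows from part~(b) of the first paragraph together with Kirwan surjectivity in degree two (which makes the $Q$-directions of $X$ and $Y_i$ coincide). This forces one to absorb into the data a mirror map $\sigma_{Y_i}$ --- the same one that occurs in Conjecture~\ref{conj:FT_quantum_volumes} --- coming from the non-constant part of the $I$-function, which is one source of the word ``modification.'' The other is the kernel of the Kirwan map $\kappa_{Y_i}\colon H^*_T(X)\to H^*(Y_i)$: the precise statement should be an isomorphism between $\QDM(Y_i)$ and the cyclic subquotient of the pulled-back $\QDM_T(X)$ generated by $1$ (equivalently by $\cO$), on which the quantum connection and the $\hbS^\beta$ descend; this matches Woodward's quantum Kirwan picture. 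Finally, the completion is unavoidable because $\dim H_2(Y_i)$ can be strictly smaller than $\dim H_2(X)+\rank T$ (the blowup example of \S\ref{subsec:blowup}), so $\frM$ only maps dominantly to the K\"ahler moduli of $Y_i$ near $p_i$, and one must complete along the contracted directions.

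The hardest part is making the above rigorous \emph{with the bulk parameter $\tau$ turned on}: one must show that the Jeffrey--Kirwan/Fourier transform intertwines not merely the $J$- and $I$-functions but the entire Givental cones --- equivalently, the big quantum products and the action of $\hcS^\beta$ on all the tangent spaces $T_\tau = M(\tau)\cH_+$ --- while simultaneously controlling the analytic continuation of the Mellin--Barnes integrals across the walls of $\overline{C_T(X)}$ and identifying the kernel of $\kappa_{Y_i}$ precisely. Working entirely with formal completions along $p_i$ sidesteps the analytic difficulties, but the compatibility of the transform with the full cone --- in effect a Fourier-theoretic refinement of the quantum Kirwan map --- is the genuine obstacle, and is exactly the content of the joint work with Sanda \cite{Iritani-Sanda:reduction}.
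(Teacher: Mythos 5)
The statement you are trying to prove is a \emph{conjecture}: the paper does not prove Conjecture \ref{conj:reduction_QDM}, but only states it informally, refines it into the reduction conjecture (Conjecture \ref{conj:reduction}, joint with Sanda), anticipates the needed ``modification and completion'' in Remark \ref{rem:modification_completion}, and establishes it only in special situations (toric/linear targets as in Examples \ref{exa:quantum_Cn}, \ref{exa:GKZ}, \ref{exa:projective_bundle}, and the master-space setting behind Theorems \ref{thm:projective_bundle_decomposition} and \ref{thm:blowup_decomposition}, where the discrete and continuous Fourier transforms are matched by the residue theorem). Your proposal is a reasonable roadmap in the same spirit, but it is not a proof: its central step is ``first prove Conjecture \ref{conj:reduction}'', and you yourself concede at the end that the compatibility of the transform with the full Givental cones is ``exactly the content'' of \cite{Iritani-Sanda:reduction}. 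Deferring the essential content to the conjecture one is supposed to establish is a genuine gap, not a gap in exposition.

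Beyond that structural circularity, several intermediate claims would fail as stated. The Mellin--Barnes/contour-closing argument you invoke for general $X$ has no meaning outside the linear and toric cases: for a general projective $X$ the discrete Fourier transform is the formal sum $\sum_{[\beta]}\kappa(\hcS^{-\beta}J_X)\hS^\beta$ over $N_1^T(X)/N_1(X)$, and the known proofs (e.g.\ Example \ref{exa:projective_bundle}) go through quantum Riemann--Roch and localization, not a contour deformation; support in $C_{Y,\N}^\vee$ is itself part (1) of Conjecture \ref{conj:reduction}, not a consequence of closing a contour. Your claim that, because $\cL_{Y_i}$ is overruled, it ``is generated from any single one of its points by Birkhoff factorization'' is false in general: one point of the cone determines only the ruling subspace/tangent space through it, and reconstructing the whole big-phase-space cone from an $I$-function requires extra hypotheses (divisor-generation, enough parameters); this is precisely why Conjecture \ref{conj:reduction} is formulated for the \emph{big} equivariant $J$-function with all bulk parameters $\tau$, and why matching small $J$/$I$-functions does not yield the $D$-module statement. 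Likewise, part (b) is not ``essentially forced'': when $\dim H_2(Y_i)<\dim H_2(X)+\rank T$ (the blowup case) the commutation relations alone do not dictate the identification, which is exactly where the nontrivial modification enters. Finally, your proposed modification (a cyclic subquotient cut out by the kernel of the Kirwan map) differs from the paper's anticipated one (the submodule $\C[C_{i,\N}^\vee]\cdot\QDM_T(X)\subset\QDM_T(X)[Q^{-1}]$ generated by shift operators, followed by graded completion along $I_n$, Remark \ref{rem:modification_completion}); no argument is offered that your version is equivalent, and in the absence of a proof of Conjecture \ref{conj:reduction} neither can be verified. So the proposal should be presented as a strategy consistent with the paper's heuristics, not as a proof.
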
 

\begin{remark} 
Suppose that $Y_i$ is a smooth manifold\footnote{When $Y_i$ is an orbifold, the (dual) Kirwan map generally can only be defined over $\Q$. } (without orbifold singularities). In this case, the \emph{dual} Kirwan map $\kappa^* \colon H_2(Y_i,\Z) \to H_2^T(X,\Z)$ maps $\NEN(Y_i)$ to $C_{i,\N}^\vee$ (as $\kappa$ maps $C_i$ to $\Amp(Y_i)$). This induces a ring extension $\C[z][\![\NEN(Y_i)]\!] \to \C[z][\![C_{i,\N}^\vee]\!]$. Consequently we can pull-back the quantum $D$-module of $Y_i$ to the chart $\Spec \C[z][\![C_{i,\N}^\vee]\!]$ and compare it with $\QDM_T(X)$. 
\end{remark} 

\begin{remark} 
\label{rem:modification_completion}
About the necesary `modifications and complections' for $\QDM_T(X)$ in the conjecture, we anticipate the following. Suppose again that $Y_i = X/\!/_i T$ is a smooth manifold. Instead of the pull-back $\QDM_T(X)\otimes_{\C[z][\![\NEN^T(X)]\!]} \C[z][\![C_{i,\N}^\vee]\!]$, we consider the submodule 
\[
\cM := \C[C_{i,\N}^\vee] \cdot \QDM_T(X) \subset \QDM_T(X)[Q^{-1}] 
\] 
generated by the action of the shift operators $\hbS^\beta$, $\beta \in C_{i,\N}^\vee$ (see \eqref{eq:shift_operator_codomain}), where  $\QDM_T(X)[Q^{-1}]$ denotes the localization of $\QDM_T(X)$ by $\{Q^d\}_{d \in \NEN(X)}$. We then anticipate that the graded completion of $\cM$ with respect to the filtration of ideals $I_n = \langle \hS^{\beta} : \beta \cdot \homega \ge n\rangle \subset \C[C_{i,\N}^\vee]$ gives rise to the quantum $D$-module of $Y_i$, where $\hS^\beta\in \C[C_{i,\N}^\vee]$ denotes the element corresponding to $\beta \in C_{i,\N}^\vee$ and $\homega\in C_i$. 
\end{remark} 
 
\begin{remark}
Recall from Remark \ref{rem:grading} that we adopt the convention of \emph{graded} completion.  Because of this, the subscheme \eqref{eq:formal_subscheme} can be global, potentially connecting distinct torus fixed points of $\frM$. Consider chambers $C_i$ and $C_j$ separated by a codimension-one wall $W\subset N^1_T(X)_\R$ and let $\beta \in N_1^T(X)$ be an integral normal vector to $W$ pointing toward the chamber $C_j$. If the variable $q=\hS^\beta\in \C[N_1^T(X)]$ associated with $\beta$ has a non-zero degree (i.e.~$c_1^T(X) \cdot \beta \neq 0$), elements of the graded completions $\C[\![C_{i,\N}^\vee]\!]$, $\C[\![C_{j,\Z}^\vee]\!]$ are adically polynomials in $q$ or $q^{-1}$. This implies that the fixed points $p_i$, $p_j$ corresponding to chambers $C_i$, $C_j$ are connected within the subscheme \eqref{eq:formal_subscheme}: they correspond to $q=0$ and $q=\infty$ respectively. When $\deg q =0$, these two charts are not connected within the subscheme, but we anticipate that $\QDM_T(X)$ admits an analytification along the torus invariant curve linking $p_i$ and $p_j$. Note that $\deg q =0$ corresponds to a `crepant' wall-crossing while $\deg q\neq 0$ corresponds to a `discrepant' wall-crossing.  
\end{remark}

\subsection{Reduction conjecture} 
\label{subsec:reduction_conjecture} 
We state our reduction conjecture, whose formulation has been worked out with Fumihiko Sanda \cite{Iritani-Sanda:reduction}. This gives a more rigorous formulation of Conjecture \ref{conj:reduction_QDM}. It can be thought of as a version of Teleman's conjecture \cite{Teleman:gauge_mirror, Pomerleano-Teleman:announcement} stated for solutions to quantum $D$-modules.  
\begin{conjecture}[reduction conjecture \cite{Iritani-Sanda:reduction}]  
\label{conj:reduction} 
Let $X$ be a smooth projective variety equipped with an algebraic $T_\C$-action. Let $Y$ be a smooth GIT quotient of $X$ without orbifold singularities. Let $J_X = J_X(\tau,z)$ denote the big equivariant $J$-function (see \S \ref{subsec:Givental_cone_J-function}) and let $\kappa \colon H^*_T(X) \to H^*(Y)$ denote the Kirwan map. Define the $H^*(Y)$-valued power series 
\[
I_Y:= \sum_{[\beta] \in N_1^T(X)/N_1(T)} \kappa( \hcS^{-\beta} J_X) \hS^\beta \in \sum_{\beta \in N_1^T(X)} H^*(Y)[z,z^{-1}][\![\tau]\!] \hS^\beta 
\]
where $\beta\in N_1^T(X)$ is a representative of $[\beta] \in N_1^T(X)/N_1(X) \cong H_2^T(\pt,\Z)$ and $\hS^\beta$ denotes the element of the group ring $\C[N_1^T(X)]$ associated with $\beta$; each summand does not depend on the choice of a representative.  Then we have the following. 
\begin{itemize} 
\item[(1)] Let $C_Y \subset N^1_T(X)_\R$ be the GIT chamber of $Y$ and let $C_{Y,\N}^\vee$ be the set of $\beta \in N_1^T(X)$ whose image in $N_1^T(X)_\R$ lies in the dual cone $C_Y^\vee$. The $\hS$-power series $I_Y$ is supported on  $C_{Y,\N}^\vee$. 

\item[(2)] $z I_Y$ is a point on the (non-equivariant) Givental cone $\cL_Y$ of $Y$ defined over the extension $\C[\![C_{Y,\N}^\vee]\!]$ of the Novikov ring of $Y$. 
\end{itemize} 
\end{conjecture}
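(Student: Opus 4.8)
The plan is to prove the two assertions by rather different means: (1) is a combinatorial consequence of the geometry of Seidel spaces and variation of GIT, while (2) is the Gromov--Witten-theoretic heart of the statement.

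For (1), I would first unwind the definition of $I_Y$ by inserting the explicit fixed-point formula \eqref{eq:def_hcS} for $\hcS^{-\beta}$ into $zJ_X = zM(\tau)1$ and reindexing the double sum over $[\beta]$ and over Novikov classes. The coefficient of a monomial $\hS^\gamma$ with $\gamma \in N_1^T(X)$ then receives contributions only from classes of the form $\sigma_F(\ovbeta) + d$, where $F$ is an $\ovbeta(\C^\times)$-fixed component of $X$, $\sigma_F(\ovbeta) \in N_1^{\rm sec}(E_{\ovbeta}) \subset N_1^T(X)$ is the associated section class, and $d \in \NEN(X)$. Crucially, $\kappa$ annihilates every contribution supported on a $T$-fixed component lying in the $Y$-unstable locus -- in particular the most negative maximal section classes $\sigma_{\rm max}(\ovbeta) = \sigma_{F_{\rm max}(\ovbeta)}(\ovbeta)$. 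Using the effectivity lemma $\NEN^{\rm sec}(E_k) = \sigma_{\rm max}(k) + \NEN(X)$ of \cite[Lemma 2.3]{Iritani:monoidal}, the chamber decomposition of $\overline{C_T(X)}$ via the Dolgachev--Hu/Thaddeus fan \cite{Dolgachev-Hu:VGIT, Thaddeus:GIT}, and a comparison of the Hilbert--Mumford (moment-map) weights at the fixed components surviving $\kappa$, one checks that each such surviving section class $\sigma_F(\ovbeta)$ pairs non-negatively with $C_Y$, i.e.\ lies in $C_Y^\vee$. Since $C_{Y,\N}^\vee$ is a submonoid of $N_1^T(X)$ containing $\NEN(X)$, this yields (1).

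For (2) I would assemble three ingredients. First, $zJ_X \in \cL_X^{\rm eq}$ by the overruled description \eqref{eq:overruled}. Second, each $\hcS^{-\beta}$ preserves $\cL_X^{\rm eq}$ after passing to the localized Novikov ring: the intertwining identity $M(\tau)\circ\hbS^\beta = \hcS^\beta\circ M(\tau)$ of Proposition \ref{prop:intertwining_property}, together with the $\C[z]$-linearity of $\hbS^\beta$, gives $\hcS^{-\beta}(zM(\tau)\cH_+) = zM(\tau)(\hbS^{-\beta}\cH_+) \subseteq zM(\tau)\cH_+$, so $\hcS^{-\beta}(zJ_X)$ lies on $\cL_X^{\rm eq}$ and in particular has no $\lambda$-poles, whence $\kappa$ applies to it. Third -- the key step -- one must show that the $\hS$-weighted sum $zI_Y = \sum_\beta \kappa(\hcS^{-\beta}(zJ_X))\hS^\beta$ lies on the non-equivariant cone $\cL_Y$. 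My approach would be to use the characterization of $\cL_Y$ as the overruled Lagrangian cone whose attached $D$-module is $\QDM(Y)$: applying $\kappa$ to the commutation relations $[z\nabla_{\xi Q\partial_Q}, \hbS^\beta] = z(\xi\cdot\beta)\hbS^\beta$ (for $\xi \in H^2_T(X)$) and to the quantum-connection intertwiners of Proposition \ref{prop:intertwining_property}, one produces operators annihilating $zI_Y$ that reproduce the quantum connection of $Y$ in the directions dual to $C_{Y,\N}^\vee$, the $z$-direction being handled by the $z\partial_z$-intertwiner and the homogeneity of $\kappa$; combined with the leading behaviour $e^{\sigma_Y(\tau)/z}(1+O(z^{-1}))$ of $zI_Y$ in the large-radius limit of $C_Y$, this forces $zI_Y \in \cL_Y$ over $\C[\![C_{Y,\N}^\vee]\!]$.

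An alternative and, for special $X$, more explicit route to the key step is reduction to known mirror theorems: when $X$ is a vector space and $T$ a torus, $Y$ is toric and $I_Y$ coincides -- via the shift-operator computation of Example \ref{exa:GKZ} -- with Givental's GKZ-type $I$-function, which lies on $\cL_Y$ by the toric mirror theorem \cite{Givental:toric_mirrorthm}; for general $X$ one would realize $X$ equivariantly as a GIT quotient of a vector space and deduce the claim for $Y$ from the abelian/nonabelian correspondence together with Woodward's quantum Kirwan map \cite{Woodward:qKirwan}. I expect this transfer to be the main obstacle: the Kirwan map is only a ring homomorphism on cohomology, the conceptually correct bridge being gauged Gromov--Witten theory, so identifying $I_Y$ with the quantum-Kirwan image of $J_X$ -- while controlling convergence over the completed Novikov ring $\C[\![C_{Y,\N}^\vee]\!]$ and matching mirror maps when $Y$ is not Fano -- is the genuinely hard point. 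A secondary difficulty is the cancellation, under the infinite sum over $[\beta] \in N_1^T(X)/N_1(X)$, of the apparent $\lambda$-poles and Novikov-negative terms, which is the quantum counterpart of the regularity of Jeffrey--Kirwan residues (cf.\ Remark \ref{rem:JK_res}).
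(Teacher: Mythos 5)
The statement you are addressing is Conjecture \ref{conj:reduction}: in the paper it is an \emph{open conjecture} (formulated with Sanda, with details deferred to \cite{Iritani-Sanda:reduction}), and the paper contains no proof of it --- only verifications in special cases (Examples \ref{exa:reduction_conjecture_projective_space}, \ref{exa:toric_tautological}, \ref{exa:projective_bundle}, the last resting on the quantum Riemann--Roch theorem of Coates--Givental and Brown's mirror theorem) and the heuristic analogy with Jeffrey--Kirwan residues. So there is no ``paper proof'' for your argument to coincide with, and what you have written is a programme rather than a proof; judged on its own terms it has two genuine gaps.

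First, for part (1) your mechanism ``$\kappa$ annihilates every contribution supported on an unstable fixed component'' is not available as stated: $\hcS^{-\beta}J_X$ is defined fixed-componentwise by \eqref{eq:def_hcS} only as an element of $H^*_\hT(X)_{\rm loc}$, and the individual localization contributions carry poles in $\lambda$, on which $\kappa$ is simply not defined --- the paper's remark following the conjecture emphasizes that $\kappa(\hcS^{-\beta}J_X)$ makes sense only after the contributions recombine into an honest class (via the Laurent expansion at $z=\infty$ of an element of $T_\tau=M(\tau)\cH_+$). In the model case $X=\C^n$ there is a single fixed point, and the vanishing of the $k<0$ terms comes from the kernel of $\kappa$ applied to the global class ($\kappa(\lambda^n)=p^n=0$), not from discarding unstable components; so the JK-residue-type regularity you relegate to a ``secondary difficulty'' is in fact a prerequisite for your argument for (1). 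Second, for part (2) the decisive step is missing: knowing that $zI_Y$ is annihilated by operators reproducing the quantum connection of $Y$ in the $C_{Y,\N}^\vee$-directions, is homogeneous, and has the expected large-radius asymptotics does not force it to lie on the overruled cone $\cL_Y$, since $\cL_Y$ encodes \emph{all} genus-zero descendant invariants and is not characterized by the small quantum connection plus leading behaviour; this is exactly why the cases that are actually known proceed through mirror theorems and quantum Riemann--Roch (Givental's toric theorem, Brown, Koto, Example \ref{exa:projective_bundle}), and your fallback via such theorems together with an abelian/nonabelian correspondence and Woodward's quantum Kirwan map reproduces only those special cases --- which is precisely the current state of the conjecture --- rather than proving it in general. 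You identify this as the hard point yourself, which is accurate, but it means the proposal does not close the statement.
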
 

\begin{remark} 
Recall from \S \ref{subsec:Givental_cone_J-function} that $\hcS^{-\beta} J_X\in H^*_{\hT}(X)_{\rm loc}$ lies in the tangent space $T_\tau =M(\tau) \cH_+$ of the Givental cone (up to localization by a monomial of $Q$). By the definition of $M(\tau)$, its Laurent expansion at $z=\infty$ yields an element of $H^*_T(X)(\!(z^{-1})\!)$, thus ensuring the well-definedness of $\kappa(\hcS^{-\beta} J_X)$. Note that $\kappa(\alpha)$ for a general $\alpha \in H^*_{\hT}(X)_{\rm loc}$ may not be well-defined. Applying the Kirwan map is analogous to taking (Jeffrey-Kirwan) residues (see Remark \ref{rem:JK_res}). 
\end{remark} 

\begin{remark} 
The function $I_Y$ in the conjecture can be viewed as a dicrete Fourier tranform of $J_X$. The assignment $J_X \mapsto I_Y$  is analogous to the transformation 
\[
f(\lambda) \longmapsto \sum_{n\in \Z} f(nz) S^n 
\]
that intertwines multiplication by $\lambda$ with the operator $z S \partial_S$ and shifting $\lambda$ by $-z$ with multiplication by $S$. 
\end{remark}

\begin{remark} 
Recall that $z J_X$ lies in the equivariant Givental cone $\cL_X^{\rm eq}$ (see \S \ref{subsec:Givental_cone_J-function}). 
We can replace the $J$-function $J_X$ in the conjecture with any elements lying in the equivariant Givental cone of $X$ multiplied by $z^{-1}$. The conjecture implies that the discrete Fourier transformation gives rise to a map 
\[
\cL_X^{\rm eq} \to \cL_Y, \qquad z J \mapsto z \sum_{[\beta]} \kappa(\hcS^{-\beta} J) \hS^\beta  
\]
between the Givental cones. This induces a map from the quantum $D$-module of $X$ to that of $Y$ (see below). 
\end{remark} 

Let us briefly spell out what we mean by ``$z I_Y$ gives a point on the Givental cone $\cL_Y$'' in the above conjecture, in terms of quantum $D$-modules. We write $I_Y = I_Y(\tau)$ to emphasize the dependence on $\tau \in H^*_T(X)$. The description \eqref{eq:overruled} of the Givental cone as a union of semi-infinite subspaces implies that 
\[
I_Y(\tau) = M_Y(\sigma(\tau)) v(\tau) 
\] 
for some $\sigma(\tau) \in H^*(Y)[\![C_{Y,\N}^\vee]\!][\![\tau]\!]$ and $v(\tau) \in H^*(Y)[z][\![C_{Y,\N}^\vee]\!][\![\tau]\!]$, where $M_Y$ stands for the fundamental solution for $Y$.  Since the $J$-function corresponds to the identity class $1\in \QDM_T(X)$ (i.e.~$J_X(\tau) = M_X(\tau) 1$), the Fourier transformation $J_X \mapsto I_Y$ induces a map between quantum $D$-modules 
\[
\QDM_T(X) \to \sigma^* \QDM(Y) \quad \text{such that} \quad 1 \mapsto v(\tau).    
\]
Since it is a homomorphism of $D$-modules, it sends $\phi_\alpha = z\nabla_{\tau^\alpha} 1$ to  $z (\sigma^*\nabla)_{\tau^\alpha} v(\tau)$.  Note that the map $H^*_T(X) \to H^*(Y)$, $\tau \mapsto \sigma(\tau)$ serves as a change of variables between bulk parameters. We anticipate that this map $\sigma(\tau)$ should be related to the quantum Kirwan map in the sense of Woodward \cite{Woodward:qKirwan}.  

\begin{example}[cf.~Example \ref{exa:quantum_Cn}]  
\label{exa:reduction_conjecture_projective_space} 
Let $X=\C^n$ and consider the diagonal $T_\C = \C^\times$ action. The GIT quotient is $\PP^{n-1}$. Although $X$ is noncompact, the reduction conjecture holds in this case. The positive generator of $N_1^T(X) \cong H^2_T(\pt,\Z) \cong \Z$ gives rise to the shift operator $\cS = \lambda^n e^{-z \partial_\lambda}$ acting on $H^*_T(X)_{\rm loc} = \C(\lambda,z)_{\rm hom}$ (see above Proposition \ref{prop:intertwining_property}). The $J$-function of $X$ at $\tau=0$ is given by $J_X = 1$ and its discrete Fourier transform can be calculated as follows: 
\begin{align*} 
I & = \sum_{k\in\Z} \kappa( \cS^{-k} 1) S^k \\ 
& = \sum_{k\in \Z} \kappa \left( \frac{\prod_{c\le 0} (\lambda + cz)^n}{\prod_{c\le k} (\lambda + cz)^n} \right) S^k \\ 
& = \sum_{k\ge 0} \frac{1}{\prod_{c=1}^k (p+cz)^n} S^k 
\end{align*} 
where the Kirwan map $\kappa \colon H^*_T(X) \to H^*(\PP^{n-1})$ sends $\lambda$ to the hyperplane class $p\in H^2(\PP^{n-1})$. Note that the terms with $k<0$ vanish because $\kappa(\lambda^n) = p^n = 0$. This gives the Givental $I$-function \cite{Givental:homological} for $\PP^{n-1}$; in this case the $I$-function equals the $J$-function. We can obtain the $I$-functions \cite{Givental:toric_mirrorthm} of toric varieties $\C^m/\!/T$  in a similar way. 
\end{example} 

\begin{example} 
\label{exa:toric_tautological}
Consider again $X = \C^n$, but now equipped with the $T_\C= (\C^\times)^n$ action. Let $K_\C = \C^\times \subset T_\C$ denote the diagonal subgroup. We can proceed with the following two-step reductions. 
\[
\xymatrix{
X=\C^n \ar@{~>}[r]^-{\cF_1} \ar@{~>}@/_20pt/[rr]_-{\cF_3} & X/\!/K = \PP^{n-1} \ar@{~>}[r]^-{\cF_2}  & X/\!/T = \pt 
}
\]
Let $\cS_1,\dots,\cS_n$ be the shift operators on $H_{\hT}^*(X)_{\rm loc}$ associated with the standard basis of $N_1^T(X) = \Hom(S^1,T) = \Z^n$: we have $\cS_i = \lambda_i e^{-z\partial_{\lambda_i}}$. Using a canonical identification $N_1^{T/K}(\PP^{n-1}) \cong N_1^T(X) \cong \Z^n$, we write $\sigma_1,\dots,\sigma_n$ for the shift operators on $H_{\widehat{T/K}}^*(\PP^{n-1})_{\rm loc}$ associated with the standard basis of $\Z^n$: $\sigma_1\sigma_2\cdots \sigma_n$ is identified with the Novikov variable $Q$ of $\PP^{n-1}$.  Let $\cF_1$, $\cF_2$, $\cF_3$ denote the discrete Fourier transformations associated with these reductions: 
\begin{align*} 
\cF_1(f) & = \sum_{k\in \Z} \kappa_1((\cS_1 \cdots \cS_n)^{-k} f) Q^k   \\ 
\cF_2(g) & =  \sum_{[(k_1,\dots,k_n)]\in \Z^n/\Z} 
\kappa_2(\sigma_1^{-k_1}\cdots \sigma_n^{-k_n} g) S_1^{k_1}\cdots S_{n}^{k_n} \\ 
\cF_3(f) & = \sum_{(k_1,\dots,k_n)\in \Z^n} \kappa_3(\cS_1^{-k_1}\dots \cS_n^{-k_n} f) S_1^{k_1} \cdots S_n^{k_n} 
\end{align*} 
where $Q :=S_1S_2\cdots S_n$ is the variable corresponding to the generator of $\Z \cong \Hom(S^1,K) \subset \Hom(S^1,T)$, $f\in H^*_\hT(X)_{\rm loc}$, $g\in H^*_{\widehat{T/K}}(\PP^{n-1})_{\rm loc}$ and $\kappa_1 \colon H^*_T(X) \to H^*_{T/K}(\PP^{n-1})$, $\kappa_2 \colon H^*_{T/K}(\PP^{n-1}) \to H^*(\pt)$, $\kappa_3 \colon H^*_T(X) \to H^*(\pt)$ are the Kirwan maps.  
The reduction conjecture implies that they should give rise to maps $\cF_1 \colon\cL_X^{\rm eq} \to \cL_{\PP^{n-1}}^{\rm eq}$, $\cF_2 \colon \cL_{\PP^{n-1}}^{\rm eq} \to \cL_\pt$, $\cF_3 \colon \cL_X^{\rm eq} \to \cL_\pt$, where $\cL_X^{\rm eq}$, $\cL_{\PP^{n-1}}^{\rm eq}$ denote the $T$- and  $T/K$-equivariant Givental cones of $X$, $\PP^{n-1}$ respectively. Let us assume that we have the `chain rule' for these Fourier transformations (which shall be discussed in \cite{Iritani-Sanda:reduction})  
\[
\cF_3 = \cF_2 \circ \cF_1.  
\] 
Similarly to the previous example, the Fourier transform of $J_X |_{\tau=0}= 1$ gives rise to the $T/K$-equivariant $J$-function $J_{\PP^{n-1}}$ of $\PP^{n-1}$ with parameter $\tau =0$. 
\[
\cF_1(1) = \sum_{k\ge 0} \frac{Q^k}{\prod_{i=1}^n \prod_{c=1}^k (u_i+ cz)} = J_{\PP^{n-1}} 
\]
where $u_i = \kappa_1(\lambda_i)\in H^2_{T/K}(\PP^{n-1})$ is the $T/K$-equivariant Poincar\'e dual of the divisor $\{[z_1,\dots,z_n] \in \PP^{n-1} : z_i =0\}$. Using the chain rule above, we compute 
\begin{align*} 
\cF_2(J_{\PP^{n-1}}) & = \cF_2 (\cF_1(1)) = \cF_3(1) \\ 
& = \sum_{(k_1,\dots,k_n) \in \Z^n} \kappa_3(\cS_1^{-k_1} \cdots \cS_n^{-k_n} 1) S_1^{k_1} \cdots S_n^{k_n} \\ 
& = \sum_{(k_1,\dots,k_n) \in (\Z_{\ge 0})^n} \frac{1}{k_1! \cdots k_n! z^{k_1+\cdots +k_n}} S_1^{k_1} \cdots S_n^{k_n} \\ 
& = e^{(S_1+\cdots+S_n)/z}.  
\end{align*} 
This equals $e^{W/z}$ with $W=S_1+\cdots+S_n$ (together with the relation $Q=S_1\cdots S_n$) being the mirror Landau-Ginzburg potential for $\PP^{n-1}$. Note also that this is the $J$-function $J_\pt(\tau)$ of a point with parameter $\tau=W$ and thus indeed a point on the cone $\cL_{\pt}$ multiplied by $z^{-1}$. 

This example can be generalized to an arbitrary toric variety $X$. Considering the natural $T=(S^1)^{\dim X}$-action on a smooth projective toric variety $X$, we obtain the Landau-Ginzburg potential\footnote{If we consider the Fourier transform of the $I$-function, we get $\cF(I_X) = e^{W_{\rm HV}/z}$ with $W_{\rm HV}$ being the Hori-Vafa potential.} $W$ mirror to $X$ via 
\[
\cF(J_X) = e^{W/z} 
\]
where $\cF \colon H^*_\hT(X)_{\rm loc} \to \sum_{\beta\in N_1^T(X)} H^*(\pt)\hS^\beta$ is the discrete Fourier transformation associated with the GIT quotient $X/\!/T = \pt$. This gives a \emph{tautological} mirror construction for toric varieties. Here, $W$ can depend on $z$ in general and admits an expansion of the form $W = W_0 + W_1 z + W_2 z^2 + \cdots$. More precisely, this construction gives a primitive form $e^{W/z} \frac{dS_1}{S_1} \cdots \frac{dS_n}{S_n}$ mirror to $X$, with $n=\dim X$, in the sense of Saito theory \cite{SaitoK:primitiveform}.  We plan to discuss the details in \cite{Iritani-Sanda:reduction}. 
\end{example} 

\begin{example}[\cite{Iritani-Koto:projective_bundle}] 
\label{exa:projective_bundle} 
This is a generalization of Example \ref{exa:reduction_conjecture_projective_space}. 
Let $V \to B$ be a vector bundle over a smooth projective variety $B$ such that $V^\vee$ is generated by global sections. Consider the $S^1$-action on $V$ scaling the fibre of $V$. Let $J_V^\lambda$ be the $S^1$-equivariant $J$-function of $V$ taking values in $H^*_{S^1}(V) \cong H^*(B) [\lambda]$.  The discrete Fourier transform of $J_V^\lambda$ 
\[
I_{\PP(V)} = \sum_{k=0}^\infty \frac{q^k}{\prod_{c=1}^k \prod_{\delta:\text{Chern roots of $V$}} (\delta + p + cz)} J_V^{p+kz}  
\]
multiplied by $z$ lies in the Givental cone of the projective bundle $\PP(V)=V/\!/S^1$. Here, $q$ represents the Novikov variable corresponding to the class of a line in the fibre of $\PP(V) \to B$ and $p = c_1(\cO(1))\in H^2(\PP(V))$ is the relative hyperplane class. 
When the bundle $V$ splits into a sum of line bundles, this is a mirror theorem of Brown \cite{Brown:toric_fibration}. This example has been generalized to a toric bundle of a non-split type by Koto \cite{Koto:non-split_toric}. 

The idea of the proof is simple. By the assumption that $V^\vee$ is generated by global sections, we can realize $V$ as a subbundle of the trivial bundle $B\times \C^N \to B$. Then we have $\PP(V) \subset B\times \PP^{N-1}$ and $\PP(V)$ is cut out by a transverse section of a certain convex vector bundle over $B\times \PP^{N-1}$. We use the  quantum Riemann-Roch theorem \cite{Coates-Givental} to demonstrate that  $z I_{\PP(V)}$ lies in the Givental cone of $\PP(V)$. 
\end{example}

\section{Decomposition of quantum cohomology $D$-modules}
In this section, we discuss the decomposition theorem \cite{Iritani-Koto:projective_bundle, Iritani:monoidal} for quantum cohomology $D$-modules of projective bundles and blowups. We first give statements of the result and then outline the strategy of the proof. 

\subsection{Projective bundles} 
\label{subsec:projective_bundle} 
Recall that the small quantum cohomology of $\PP^{r-1}$ is given by 
\[
\QH_{\rm sm}(\PP^{r-1}) \cong \C[p,q]/(p^r-q) 
\]
where $q$ is the Novikov variable associated with a line and $p$ is the hyperplane class. For $q\neq 0$, $\Spec(\QH_{\rm sm}(\PP^{r-1}))$ consists of $r$ reduced points, i.e.~$\QH_{\rm sm}(\PP^{r-1})|_q$ decomposes into the direct sum $\C^{\oplus r}$ as a ring. 
This observation can be generalized to a relative setting. Let $V\to B$ be a vector bundle of rank $r$ and consider the projective bundle $\PP(V) \to B$. The Leray-Hirsch theorem implies that we have 
\[
H^*(\PP(V)) \cong H^*(B)[p]/(p^r + c_1(V) p^{r-1} + \cdots + c_r(V)) 
\]
where $p = c_1(\cO_{\PP(V)}(1))$ is the relative hyperplane class. We can define the `small vertical' quantum cohomology $\QH_{\rm sv}(\PP(V))$ by counting only vertical curves that are contracted to points under the projection $\PP(V) \to B$. We can easily calculate it as follows:  
\[
\QH_{\rm sv}(\PP(V)) \cong H^*(B)[p,q]/(p^r + c_1(V) p^{r-1} + \cdots + c_r(V) -q) 
\]
where $q$ is the Novikov variable associated with a line in a fibre of $\PP(V) \to B$. 
For $q\neq 0$, this again decomposes into the direct sum $H^*(B)^{\oplus r}$ as a ring. 
The main result of \cite{Iritani-Koto:projective_bundle} says that this decomposition extends to big quantum cohomology. 

Let $Q_\PP$, $Q_B$ and $q$ denote the Novikov variables of the projective bundle $\PP(V)$, the base $B$ and a vertical line, respectively. We have $\deg q = 2r$. 
We then embed the coefficient rings of $\QDM(\PP(V))$ and $\QDM(B)$ into the ring $\C[z](\!(q^{-1/r})\!)[\![Q_B]\!]$ as follows: 
\begin{align*} 
\C[z][\![Q_\PP]\!] &\hookrightarrow \C[z](\!(q^{-1/r})\!)[\![Q_B]\!], &  Q_\PP^d & \mapsto q^{(p+\frac{c_1(V)}{r})\cdot d} Q_B^{\pi_*d} \\ 
\C[z][\![Q_B]\!] & \hookrightarrow \C[z](\!(q^{-1/r})\!)[\![Q_B]\!], &  Q_B^d & \mapsto Q_B^d 
\end{align*} 
where $\pi\colon \PP(V)\to B$ is the projection. We also set 
\[
r' = \begin{cases} 
r & \text{if $r$ is odd,} \\ 
2r & \text{if $r$ is even.} 
\end{cases} 
\]
Let $\QDM(\PP(V))^{\rm la}$, $\QDM(B)^{\rm la}$ denote the base changes of the quantum $D$-modules $\QDM(\PP(V))$, $\QDM(B)$ to the ring $\C[z](\!(q^{-1/r'})\!)[\![Q_B]\!]$ via the above ring extensions.  

\begin{theorem}[\cite{Iritani-Koto:projective_bundle}]  
\label{thm:projective_bundle_decomposition} 
Let $V\to B$ be a vector bundle (of rank $r\ge 2$) as in Example \ref{exa:projective_bundle}. There exist a formal invertible map $\sigma=(\sigma_j)_{j=1}^r \colon H^*(\PP(V)) \to \bigoplus_{j=1}^r H^*(B)$ and an isomorphism 
\[
\Phi \colon \QDM(\PP(V))^{\rm la} \cong \bigoplus_{j=1}^r \sigma_j^* \QDM(B)^{\rm la} 
\]
that respects the connection and the pairing, 
where the maps $\sigma_j$ are defined over $\C[q^{1/r}, q^{-1/r}][\![Q_B]\!]$ and the isomorphism $\Phi$ is defined over the ring $\C[z](\!(q^{-1/r'})\!)[\![Q_B]\!]$. 
\end{theorem}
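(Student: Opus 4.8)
\emph{Plan of proof.} The plan is to obtain the decomposition from the Fourier/reduction picture of \S\ref{subsec:reduction_conjecture} applied to the fibrewise scaling $S^1$-action on $V$, for which $\PP(V)=V/\!/S^1$. The guiding idea is that under this Fourier duality the vertical Novikov variable $q$ of $\PP(V)$ becomes the shift operator on the $S^1$-equivariant quantum $D$-module of the \emph{total space} $V$, and $\QDM_{S^1}(V)$ is essentially $\QDM(B)$ twisted by the equivariant Euler class of $V$, whose associated shift operator has characteristic polynomial $\lambda^r+c_1(V)\lambda^{r-1}+\cdots+c_r(V)$; over the Laurent ring this polynomial splits into $r$ linear factors, and tracing the splitting through the twist produces the $r$ copies of $\QDM(B)$. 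Concretely, I would first invoke Example \ref{exa:projective_bundle} --- the mirror theorem for $\PP(V)$, proved in \cite{Iritani-Koto:projective_bundle} by realising $\PP(V)\subset B\times\PP^{N-1}$ as a complete intersection and applying quantum Riemann--Roch \cite{Coates-Givental}: the discrete Fourier transform sends $zJ_V^{\lambda}$ to $zI_{\PP(V)}$, a point of the Givental cone $\cL_{\PP(V)}$ defined over $\C(\!(q^{-1/r})\!)[\![Q_B]\!]$. Using the overruled description \eqref{eq:overruled} and the fact that $I_{\PP(V)}$ generates the cone, this upgrades, after the base change $Q_\PP^d\mapsto q^{(p+c_1(V)/r)\cdot d}Q_B^{\pi_*d}$ and completion, to an isomorphism between $\QDM(\PP(V))$ (base-changed, before splitting) and the Fourier transform of $\QDM_{S^1}(V)$, with the shift operator sent to multiplication by $q$ and $\lambda$ sent to $z\nabla_{q\partial_q}$ up to a grading shift, exactly as in Table \ref{tab:expected_FT}.

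Next I would make $\QDM_{S^1}(V)$ explicit. Since $V\to B$ is a vector bundle with fibrewise scaling action, every stable map to $V$ factors through $B$ and the fibre directions contribute the (inverse) $S^1$-equivariant Euler class of $V$; hence $\QDM_{S^1}(V)$ is a twist of the quantum $D$-module of $B$ over $\C[\lambda,z][\![Q_B]\!]$, and by quantum Riemann--Roch its Givental cone equals $\widehat\Delta\cdot\cL_B$ for an explicit twist operator $\widehat\Delta$ (a product over the Chern roots $\delta$ of $V$ of $\hGamma$-type factors in $(\delta+\lambda)/z$). Applying the localisation formula \eqref{eq:def_hcS} to the unique $S^1$-fixed locus $B\subset V$, whose normal bundle is $V$ with weight $1$, one finds that the shift operator for the positive generator of $N_1^{S^1}(V)\cong\Z$ acts by $\hcS=Q^{\sigma_{\rm max}}\cdot\bigl(\lambda^r+c_1(V)\lambda^{r-1}+\cdots+c_r(V)\bigr)\cdot e^{-z\partial_\lambda}$, with $\sigma_{\rm max}$ the maximal section class; under the base change above the monomial $q^{(p+c_1(V)/r)\cdot d}$ matches $\hcS$ up to a shift by $\sigma_{\rm max}$ (recall $\kappa(\lambda)=p$), which explains why this particular monomial appears in the statement.

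I would then diagonalise over $R:=\C[z](\!(q^{-1/r'})\!)[\![Q_B]\!]$. Under the Fourier transform multiplication by $p$ on $\QH(\PP(V))$ corresponds to $z\nabla_{q\partial_q}$, and the defining relation $q=p^{r}+c_1(V)p^{r-1}+\cdots+c_r(V)$ of the small vertical quantum cohomology has over $R$ exactly $r$ roots $p=p_j=\zeta_j q^{1/r}\bigl(1+O(q^{-1/r})\bigr)$, $\zeta_j^r=1$, the corrections being valued in $H^{\ge2}(B)[\![Q_B]\!]$ and hence truncating because $H^{>0}(B)$ is nilpotent. Since $p_i-p_j$ is a unit of $R$ for $i\ne j$, the $r$ orthogonal idempotents of the quantum ring lift and $\QDM(\PP(V))^{\rm la}$ splits, as a module, into $r$ summands. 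The dominant ($q^{1/r}$) growth of the connection along the $j$-th summand is an exponential factor $e^{(\mathrm{const}\cdot\zeta_j q^{1/r})/z}$, which I would absorb into the degree-zero component of a change of bulk variables $\sigma_j\colon H^*(\PP(V))\to H^*(B)$; what remains is the $B$-quantum connection twisted by the quantum Riemann--Roch operator $\widehat\Delta$ of $\QDM_{S^1}(V)$ evaluated along $\lambda=p_j$. Because $p_j$ is a unit of $R$, the $\hGamma$-type factors of $\widehat\Delta$ in $(\delta+p_j)/z$ become --- after pairing with the equivariant $\hGamma$-class, which trivialises the shift operator (cf.\ the remark following Proposition \ref{prop:intertwining_property}) --- an invertible element of $R$ times a power of $z$, i.e.\ a genuine gauge transformation over $R$; absorbing it identifies the $j$-th summand with $\sigma_j^*\QDM(B)^{\rm la}$. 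The replacement $r\rightsquigarrow r'$ for even $r$ enters precisely here: the square roots hidden in $q^{(p+c_1(V)/r)\cdot d}$ and in the roots $\zeta_j$ force this gauge transformation to be defined only over $q^{1/r'}$ when $r$ is even, while the bulk shift $\sigma_j$ itself stays defined over $\C[q^{1/r},q^{-1/r}][\![Q_B]\!]$.

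Finally I would check that the isomorphism $\Phi$ so built respects the connection and the pairing. Flatness in the $Q_B$- and $z$-directions follows from the fact that the Fourier transform intertwines the quantum connections of $V$ and $\PP(V)$ (Proposition \ref{prop:intertwining_property}) together with the flatness of the gauge transformations above; flatness in the $q$-direction corresponds on the $V$-side to the commutation relation $\hcS\circ\lambda=(\lambda-z)\circ\hcS$ (cf.\ \eqref{eq:shift_comm_rel}), which Fourier-transforms to the relation between $z\nabla_{q\partial_q}$ and multiplication by $q$. The $z$-sesquilinear pairing $P$ is preserved because the Kirwan map is, up to residues, adjoint to fibre integration, so $P_{\PP(V)}$ decomposes over the $r$ sheets as the orthogonal sum of the pairings $P_B$ weighted by the residues $\bigl(\partial_p(p^{r}+\cdots)\bigr)^{-1}\big|_{p=p_j}$ --- units of $R$ that are absorbed into the $z$-gauge of the previous paragraph --- consistently with the flatness of $P$. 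The main obstacle is the diagonalise-and-identify step: precisely controlling the quantum Riemann--Roch twist operator $\widehat\Delta$ once $\lambda$ is specialised to a branch $p_j$, proving that the resulting gauge transformation is genuinely defined over $\C[z](\!(q^{-1/r'})\!)[\![Q_B]\!]$ with the correct even/odd dichotomy, and extracting the explicit change of bulk variables $\sigma_j$; by comparison, the Fourier realisation is quoted and the connection/pairing verification is essentially bookkeeping.
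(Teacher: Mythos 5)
Your outline reproduces the architecture that the paper itself sketches for Theorem \ref{thm:projective_bundle_decomposition} (see \S\ref{subsec:cont_FT} and Example \ref{exa:projective_bundle_strategy}): first the Fourier duality coming from Example \ref{exa:projective_bundle}, which identifies $\QDM(\PP(V))$ (after base change) with the $S^1$-equivariant quantum $D$-module of the total space $V$, with $q$ dual to the shift operator \eqref{eq:def_hcS} at the fixed locus $B$ whose factor is $e_{S^1}(V)=\lambda^r+c_1(V)\lambda^{r-1}+\cdots+c_r(V)$; then quantum Riemann--Roch at $B\subset V$ to produce $r$ branches indexed by $\lambda\sim \zeta_j q^{1/r}$, with the asymptotics of $\sigma_j$ and the $r$ versus $r'$ dichotomy as you describe. (Even this first half is slightly understated: passing from the mirror theorem ``$zI_{\PP(V)}$ lies on $\cL_{\PP(V)}$'' to an \emph{isomorphism} of $D$-modules over the Laurent ring requires a generation/completeness argument, not just the overruled property.)

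The genuine gap is in your diagonalise-and-identify step, and it is not merely a deferred technicality. (i) Lifting the idempotents of the small vertical quantum ring gives at best a splitting of $\QDM(\PP(V))^{\rm la}$ as a module over that ring; what the theorem asserts is a splitting flat for the connection in the $q$-, $Q_B$-, bulk- and $z$-directions at an irregular singularity, and constructing such a flat splitting is essentially equivalent to the theorem itself, so it cannot simply be asserted from $p_i-p_j$ being units. (ii) The claim that the $\Gamma$-type factors of the twist operator specialised at $\lambda=p_j$ ``become an invertible element of $R$ times a power of $z$'' is false as stated: $\Gamma\bigl(-(\delta+p_j)/z\bigr)$ is not an element of $\C[z](\!(q^{-1/r'})\!)[\![Q_B]\!]$, and it only acquires meaning through its Stirling expansion as $z\to 0$. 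The statement that the resulting asymptotic series is a genuine morphism into the quantum $D$-module of $B$ (equivalently, lands on the Givental cone $\cL_B$) is precisely the content of the continuous Fourier transformation of \S\ref{subsec:cont_FT}: Theorem \ref{thm:restriction_lies_in_the_cone} together with Corollary \ref{cor:cont_FT}, i.e.\ the stationary-phase expansion of $\int e^{\lambda\log q/z}\,G_V(\lambda)\,\bbf|_B\,d\lambda$ at the $j$-th critical point. In the paper's proof this machinery is what simultaneously produces the $r$ maps $\cF_j\colon \QDM_{S^1}(V)\to\sigma_j^*\QDM(B)$, the bulk changes $\sigma_j$, and the compatibility with connection and pairing; one then proves that $\bigoplus_j\cF_j$ is an isomorphism over $\C[z](\!(q^{-1/r'})\!)[\![Q_B]\!]$ --- a surjectivity/injectivity step your outline also leaves unaddressed. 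So you should replace the idempotent-splitting step by the construction and study of these continuous Fourier transforms; as written, your proposal collapses at exactly the point you flag as the main obstacle.
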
 

\begin{remark} 
The maps $H^*(\PP(V)) \to H^*(B), \tau \mapsto \sigma_j(\tau)$ are formal and non-linear. More precisely, $\sigma_j=\sigma_j(\tau)$ lies in $H^*(B)[q^{1/r},q^{-1/r}][\![Q_B,\tau]\!]$.  

The maps $\sigma_j$ exhibit the following asymptotic behaviour: $\sigma_j|_{Q_B=\tau=0} \sim r q^{1/r} e^{2\pi \iu j/r} -\frac{2\pi\iu j}{r}  c_1(V)+  O(q^{-1/r})$.  This implies that the eigenvalues of the multiplication by the Euler vector field for $\PP(V)$ cluster into $r$ groups when $(Q_B,\tau)$ are small compared to $|q|$, as illustrated in Figure \ref{fig:projective_bundle_eigenvalues}.  
\end{remark}

\begin{figure}[t] 
\begin{tikzpicture}[x=1pt, y=1pt] 

\fill (-150,0) circle [radius=1.5];   
\fill (-195,26) circle [radius =1.5]; 
\fill (-195,-26) circle [radius =1.5]; 

\draw (-130,0) node {$H^*(B)$}; 
\draw (-205,-35) node {$H^*(B)$}; 
\draw (-205,35) node {$H^*(B)$}; 

\draw (-170,-70) node {$\QH_{\rm sv}(\PP(V))|_{q=1}$}; 

\draw (-70,0) node {$\rightsquigarrow$};

\fill[shading=radial, opacity=0.2] (40,0) circle [radius=10];   
\fill[shading=radial, opacity=0.2] (-15,26) circle [radius =10]; 
\fill[shading=radial, opacity=0.2] (-15,-26) circle [radius =10]; 

\draw (75,0) node {$\QH(B)_{\sigma_1}$}; 
\draw (-14,45) node {$\QH(B)_{\sigma_2}$}; 
\draw (-14,-45) node {$\QH(B)_{\sigma_3}$}; 

%
%

\draw (25,-70) node {$\QH(\PP(V))$ with $|Q_B|, |\tau| \ll |q|$}; 
\end{tikzpicture} 
\caption{Decomposition of quantum cohomology of $\PP(V)$ by Euler eigenvalues (with $r=3$).} 
\label{fig:projective_bundle_eigenvalues} 
\end{figure}
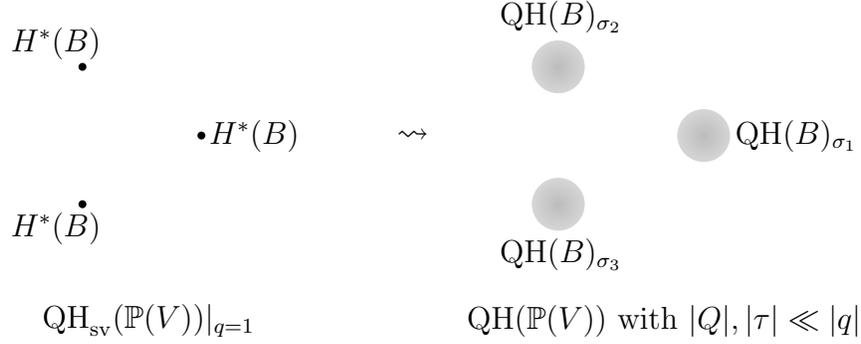 

\begin{remark} 
\label{rem:F-manifold_projective_bundle}
Note that the quantum connection $z \nabla_{\tau^i} = z \partial_{\tau^i} + \phi_i\star_\tau$ tends to the quantum multiplication $\phi_i\star_\tau$ as $z \to 0$. This implies that the derivative of the map $\sigma$ induces a decomposition of the big quantum cohomology ring: 
\[
(d\sigma)_\tau \colon (H^*(\PP(V)), \star_\tau) \cong \bigoplus_{j=1}^r (H^*(B), \star_{\sigma_j(\tau)}) 
\] 
i.e.~the map $\sigma\colon H^*(\PP(V)) \to H^*(B)^{\oplus r}$ defines a decomposition of quantum cohomology $F$-manifolds. 
\end{remark}

\subsection{Blowups}
\label{subsec:blowup} 
Let $X$ be a smooth projective variety and $Z \subset X$ be a smooth subvariety of codimension $r\ge 2$. Let $\tX$ be the blowup of $X$ along $Z$. The following additive decomposition of the ordinary cohomology of $\tX$ is well-known:   
\begin{align*} 
H^*(\tX) & \cong \varphi^*H^*(X) \oplus \bigoplus_{k=0}^{r-2} j_*(p^k \pi^* H^*(Z)) 
\\ 
& \cong 
H^*(X) \oplus \bigoplus_{k=1}^{r-1} H^*(Z)(-k) \qquad \text{(as a Hodge structure)} 
\end{align*} 
where $\varphi \colon \tX \to X$ is the blowup morphism, $j\colon D\to \tX$ is an embedding of the exceptional divisor $D \cong \PP(\cN_{Z/X})$, $\pi = \varphi|_D \colon D\to Z$ is the projective bundle and $p=c_1(\cO_D(1))$ is the relative hyperplane class. 
\[
\xymatrix{
D \ar@{^{(}->}[r]^{j} \ar[d]_{\pi} & \tX \ar[d]^{\varphi} \\ 
Z \ar@{^{(}->}[r]_{i} & X 
}
\]
By counting `vertical' curves that are contracted to points by $\varphi \colon \tX \to X$, we can define the `small vertical' quantum cohomology $\QH_{\rm sv}^*(\tX)$ of $\tX$, which has the following multiplicative decomposition\footnote{This should be verified by direct computation. Alternatively, this follows by setting $Q=0$ in Theorem \ref{thm:blowup_decomposition}.}: 
\[
\QH_{\rm sv}^*(\tX)|_{q=1} \cong H^*(X) \oplus H^*(Z)^{\oplus (r-1)} \qquad \text{(as a ring)} 
\]
where $q$ is the Novikov variable associated with a line in the fibre of $\pi\colon D \to Z$. The main result of \cite{Iritani:monoidal} generalizes this to big quantum cohomology. 

To compare the quantum cohomology ($D$-module) of $X$, $Z$ and $\tX$, we embed their Novikov rings into a common ring. Let $Q$, $Q_Z$, $\tQ$ denote the Novikov variable of $X$, $Z$, $\tX$ respectively. Let $q$ be the Novikov variable corresponding to a line in the fibre of $\pi\colon D\to Z$ as above. We have $\deg q = 2(r-1)$. Set 
\[
\frs = 
\begin{cases} 
r-1 & \text{if $r$ is even;} \\
2(r-1) & \text{if $r$ is odd.} 
\end{cases} 
\]
Consider the ring extensions: 
\begin{align*} 
\C[z][\![Q]\!] & \hookrightarrow \C[z](\!(q^{-1/\frs})\!)[\![Q]\!] &  Q^d &\mapsto Q^d  \\ 
\C[z][\![Q_Z]\!] & \rightarrow \C[z](\!(q^{-1/\frs})\!)[\![Q]\!] &  Q_Z^d &\mapsto q^{-\rho \cdot d/(r-1)} Q^{i_*d} \\ 
\C[z][\![\tQ]\!] & \hookrightarrow \C[z](\!(q^{-1/\frs})\!)[\![Q]\!] & \tQ^{\td} &\mapsto q^{-D\cdot d} Q^{\varphi_*\td}
\end{align*} 
where $\rho = c_1(\cN_{Z/X})$ and $i\colon Z \to X$ is the inclusion. Let $\QDM(X)^{\rm la}$, $\QDM(Z)^{\rm la}$, $\QDM(\tX)^{\rm la}$ denote the base change of the quantum $D$-modules $\QDM(X)$, $\QDM(Z)$, $\QDM(\tX)$ to $\C[z](\!(q^{-1/\frs})\!)[\![Q]\!]$ via these ring extensions. 

\begin{theorem}[\cite{Iritani:monoidal}]  
\label{thm:blowup_decomposition} 
There exist formal invertible maps $H^*(\tX) \to H^*(X)\oplus H^*(Z)^{\oplus (r-1)}$, $\ttau \mapsto (\tau(\ttau), \sigma_1(\ttau),\dots,\sigma_{r-1}(\ttau))$ and an isomorphism 
\[
\Psi \colon \QDM(\tX)^{\rm la} \cong \tau^*\QDM(X)^{\rm la} \oplus \bigoplus_{j=1}^{r-1} \sigma_j^*\QDM(Z)^{\rm la} 
\]
that respects the quantum connection and the pairing, where the map $\tau$ is defined over $\C[q,q^{-1}][\![Q]\!]$, the maps $\sigma_j$ are defined over $\C[q^{1/(r-1)},q^{-1/(r-1)}][\![Q]\!]$ and the isomorphism $\Psi$ is defined over $\C[z](\!(q^{-1/\frs})\!)[\![Q]\!]$. 
\end{theorem}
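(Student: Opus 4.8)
The plan is to realize $\tX$ through a variation of GIT, to use the discrete Fourier/reduction recipe of \S\ref{subsec:reduction_conjecture} to produce an explicit point on the Givental cone $\cL_{\tX}$ that already exhibits the decomposition, and then to upgrade this to an isomorphism of quantum $D$-modules; the exceptional contribution will be reduced to Theorem \ref{thm:projective_bundle_decomposition} for the exceptional divisor.

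The first step is to set up the GIT picture via the deformation to the normal cone. Let $W := \Bl_{Z\times\{0\}}(X\times\PP^1)$, with the $\C^\times$-action induced from $t\cdot[u:v]=[u:tv]$ on $\PP^1$. For the two chambers of $\C^\times$-linearizations one has $W/\!/_+\C^\times\cong\tX$ and $W/\!/_-\C^\times\cong X$, and the induced birational map $\tX\dashrightarrow X$ is precisely the blowdown, contracting the exceptional divisor $D=\PP(\cN_{Z/X})$ onto $Z$. The $\C^\times$-fixed locus of $W$ has three components: a copy of $X$ over $[0\!:\!1]$, a copy of $\tX$ (the proper transform) over $[1\!:\!0]$, and a copy of $Z$ (the distinguished section of the exceptional $\PP(\cN_{Z/X}\oplus\cO)$). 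In the language of \S\ref{subsec:global_Kaehler_moduli}, $\QDM(\tX)$ and $\QDM(X)$ appear as the charts of the sheaf $\QDM_{\C^\times}(W)$ near the two cusps $p_\pm$ of the global K\"ahler moduli space; since the Novikov variable $q$ of the exceptional line has $\deg q=2(r-1)\neq 0$, the intervening wall is \emph{discrepant}, the two cusps are joined within the subscheme \eqref{eq:formal_subscheme}, and the Laurent completion $\C[z](\!(q^{-1/\frs})\!)[\![Q]\!]$ is exactly the completion along the torus-invariant curve linking $p_+$ to $p_-$. (When $Z$ happens to be the zero locus of a regular section of a globally generated rank-$r$ bundle $\cE\to X$, a more direct route is available: $\tX$ is then the zero locus of a regular section of a rank-$(r-1)$ quotient of $\pi^*\cE$ on $\PP_X(\cE)$, so one may combine the projective-bundle decomposition for $\PP_X(\cE)$ with quantum Riemann--Roch as in Example \ref{exa:projective_bundle}; but such a presentation need not exist in general, which is why the GIT route is needed.)

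Next, one extracts the decomposition near the $\tX$-cusp following Conjecture \ref{conj:reduction}. Using the intertwining property of the fundamental solution with shift operators (Proposition \ref{prop:intertwining_property}) and virtual localization on $W$, one computes the equivariant $J$-function $J_W^{\C^\times}$ and its discrete Fourier transform; these split according to the three $\C^\times$-fixed components of $W$. The $X$-component reassembles, after a change of bulk coordinates $\ttau\mapsto\tau(\ttau)$, into a point of $\cL_X$, i.e.\ $\tau^*\QDM(X)^{\rm la}$; the contributions supported over the central fibre assemble into the quantum $D$-module of the exceptional divisor $D=\PP(\cN_{Z/X})\to Z$, of which the piece coming from the untwisted line bundle $\cO_D$ has already been absorbed into the $X$-part (mirroring Orlov's blow-up semiorthogonal decomposition, where $\varphi^*D^b(X)$ absorbs $j_*\pi^*D^b(Z)$). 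The remaining piece is the ``$\cO_D(1),\dots,\cO_D(r-1)$''-part of $\QDM(D)^{\rm la}$, and applying Theorem \ref{thm:projective_bundle_decomposition} to the $\PP^{r-1}$-bundle $D\to Z$ identifies it, through the changes of variables $\sigma_1,\dots,\sigma_{r-1}$, with $\bigoplus_{j=1}^{r-1}\sigma_j^*\QDM(Z)^{\rm la}$. This produces the asserted module isomorphism $\Psi$; setting $Q=0$ recovers the small-vertical decomposition quoted in \S\ref{subsec:blowup}, which serves both as a consistency check and as the base case for an induction on the Novikov variable.

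The last step is to promote $\Psi$ to an isomorphism of quantum $D$-modules: the connection operators $\nabla_{\xi Q\partial_Q}$, $\nabla_{\tau^\alpha}$, $\nabla_{z\partial_z}$ and the $z$-sesquilinear pairing $P$ are all compatible with virtual localization, with the shift operators $\hcS^\beta$ up to the explicit hypergeometric/Novikov twists in \eqref{eq:def_hcS}, and hence with the Fourier transform; since the Givental cone is overruled \eqref{eq:overruled}, a point of $\cL$ together with its tangent space reconstitutes the $D$-module with its connection, and self-duality (flatness of $P$) is inherited from $\QDM_{\C^\times}(W)$. The hard part will be the middle step: making precise, and proving, the decomposition of $\QDM_{\C^\times}(W)$ near the $\tX$-cusp, that is, establishing enough of the reduction conjecture for this particular $W$ and pinning down the ``modifications and completions'' of Remark \ref{rem:modification_completion}. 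Concretely this means controlling the non-convergent Laurent expansions in $q^{1/\frs}$ (whose branch structure forces the exponent $\frs$), proving that the central-fibre contributions assemble \emph{exactly} into $\QDM(D)^{\rm la}$ with the twist that identifies the $r-1$ surviving summands with $\QDM(Z)$, showing that $\tau(\ttau)$ and the $\sigma_j(\ttau)$ are formally invertible, and checking the compatibility of the whole construction with the bulk parameter $\ttau$ (which, as in \S\ref{subsec:Givental_cone_J-function}, requires the infinitely many coordinates $\{\tau^\alpha\}$). The projective-bundle input of Theorem \ref{thm:projective_bundle_decomposition} and the quantum Riemann--Roch argument of Example \ref{exa:projective_bundle} dispose of the $\PP^{r-1}$-bundle geometry, so the residual difficulty is genuinely the blow-up-specific gluing and the large-$q$ asymptotics.
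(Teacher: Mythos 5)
Your GIT setup (the master space $W=\Bl_{Z\times\{0\}}(X\times\PP^1)$, the two quotients $X$ and $\tX$, the fixed locus $X\sqcup Z\sqcup\tX$, and completion at the $\tX$-cusp) matches the paper. But the middle step, where the $(r-1)$ copies of $\QDM(Z)$ are supposed to come from, has a genuine gap. The localization/Fourier contributions ``over the central fibre'' are indexed by the fixed components $Z$ and $\tX$ of $W$, not by the exceptional divisor $D=\PP(\cN_{Z/X})$; in particular the $Z$-contribution is the $(e_T^{-1},\cN_{Z/W})$-twisted Gromov--Witten theory of $Z$ (normal weights $(-1,\dots,-1,1)$), and there is no mechanism in your argument by which these contributions ``assemble into $\QDM(D)^{\rm la}$.'' The paper instead converts the $Z$-contribution directly into $r-1$ copies of $\QDM(Z)$ via quantum Riemann--Roch together with the \emph{continuous} Fourier transformation of \S\ref{subsec:cont_FT}: the effective potential at $Z$ has $|c_Z|=r-1$ critical points, yielding the maps $\cF_Z^1,\dots,\cF_Z^{r-1}\colon\QDM_T(W)\to\QDM(Z)$. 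Your substitute --- form $\QDM(D)$, apply Theorem \ref{thm:projective_bundle_decomposition} to $D\to Z$ (which would give $r$, not $r-1$, summands), and declare that the $\cO_D$-summand ``has already been absorbed into the $X$-part'' --- is an unproven analogy with Orlov's semiorthogonal decomposition; the paper mentions that correspondence only as an expectation tied to the $\hGamma$-integral structure and Stokes data, not as an ingredient of the proof, and nothing in your argument shows such an absorption actually occurs at the level of quantum $D$-modules. Theorem \ref{thm:projective_bundle_decomposition} is in fact not an input to the blowup theorem at all (they are parallel results proved by the same Fourier-analytic machinery).

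A second missing ingredient is the mechanism that makes the maps isomorphisms over $\C[z](\!(q^{-1/\frs})\!)[\![Q]\!]$. In the paper one constructs three Fourier transforms out of $\QDM_T(W)$ --- the discrete ones $\cF_X,\cF_\tX$ attached to the two GIT quotients and the continuous ones $\cF_Z^j$ attached to the fixed component $Z$ --- extends them to the completion $\QDM_T(W)_{\tX}$ at the $\tX$-cusp, and proves that both $\cF_\tX$ and $\cF_X\oplus\cF_Z^1\oplus\cdots\oplus\cF_Z^{r-1}$ become isomorphisms there; the decomposition is then the composite $\Psi=(\cF_X\oplus\bigoplus_j\cF_Z^j)\circ\cF_\tX^{-1}$. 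The crux is that for $X$ and $\tX$, which occur simultaneously as GIT quotients and as fixed components, the discrete and continuous Fourier transforms agree by the residue theorem (as in Example \ref{exa:quantum_Cn}); this comparison is what controls the large-$q$ Laurent expansions and establishes surjectivity/injectivity after completion. Your proposal acknowledges that ``the hard part will be the middle step'' but offers no argument for it, and the route you sketch through $\QDM(D)$ does not supply one; as written, the proof of exactly the statement to be proved is deferred rather than given.
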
 

\begin{remark} 
The maps $\tau=\tau(\ttau)$, $\sigma_j=\sigma_j(\ttau)$ lie in the rings $H^*(X)[q,q^{-1}][\![Q,\ttau]\!]$, $H^*(Z)[q^{1/(r-1)},q^{-1/(r-1)}][\![Q,\ttau]\!]$ respectively and exhibit the following asymptotic behaviour: 
\begin{align*} 
\tau|_{Q=\ttau=0} &= q^{-1}[Z] + O(q^{-2}),  \\ 
\sigma_j|_{Q=\ttau=0} &= -(r-1) e^{-\frac{2\pi \iu}{r-1}(j-1+\frac{r}{2})}q^{\frac{1}{r-1}} + \frac{2\pi \iu}{r-1}(j-\frac{1}{2}) \rho + O(q^{-\frac{1}{r-1}}). 
\end{align*} 
This shows that the eigenvalues of the multiplication by the Euler vector field of $\tX$ cluster into $(r-1)+1$ groups as illustrated in Figure \ref{fig:Eulereigenvalues_blowups}.  
\end{remark} 

\begin{remark} 
Similarly to Remark \ref{rem:F-manifold_projective_bundle}, the map $H^*(\tX) \to H^*(X) \oplus H^*(Z)^{\oplus (r-1)}, \ttau \mapsto (\tau(\ttau), \sigma_1(\ttau),\dots, \sigma_{r-1}(\ttau))$ yields a decomposition of the quantum cohomology $F$-manifold. 
\end{remark} 

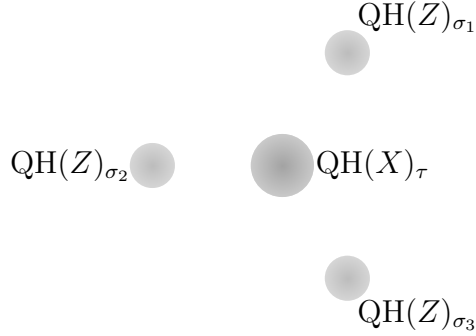
\begin{figure}[t] 
\centering 
\begin{tikzpicture}[x=0.6cm, y=0.6cm]
\fill[shading=radial, opacity=0.3] (0,0) circle (0.7); 
\fill[shading=radial, opacity=0.2] (1.44,2.5) circle (0.5); 
\fill[shading=radial, opacity=0.2] (1.44,-2.5) circle (0.5); 
\fill[shading=radial, opacity=0.2] (-2.88,0) circle (0.5); 


\draw (2,0) node {$\QH(X)_{\tau}$}; 
\draw (3,3.3) node {$\QH(Z)_{\sigma_1}$}; 
\draw (3,-3.3) node {$\QH(Z)_{\sigma_3}$}; 
\draw (-4.7,0) node {$\QH(Z)_{\sigma_2}$}; 

%
%
\end{tikzpicture} 
\caption{Decomposition of quantum cohomology of $\tX$ by Euler
eigenvalues (with $r = 4$) when $|Q|, |\ttau|\ll |q|$.}
\label{fig:Eulereigenvalues_blowups} 
\end{figure}

\begin{remark} 
Both the projective bundle $\PP(V) \to B$ in \S\ref{subsec:projective_bundle} and the blowup $\tX \to X$ in this section are examples of \emph{contractions associated with extremal rays}. An \emph{extremal ray} is a one-dimensional face $R$ of the Mori cone $\ovNE(X)\subset H_2(X,\R)$ generated by the class $d_0$ of a rational curve satisfying $c_1(X) \cdot d_0>0$. For an extremal ray $R$, there exists an associated extremal contraction that contracts precisely those curves whose homology classes lie within the ray $R$. It would be interesting to investigate the decomposition of quantum cohomology associated with extremal rays (or faces). See \cite[\S 6]{GHIKLS:counterexamples} for the relevant discussion. 
\end{remark} 

\begin{remark} 
We anticipate that the decompositions of quantum cohomology of projective bundles and blowups should also be  related, via the $\hGamma$-integral structure, to the \emph{semi-orthogonal decompositions} of derived categories: 
\begin{align*} 
D^b(\PP(V)) & \cong \left\langle D^b(B), \dots, D^b(B) \right\rangle \\ 
D^b(\tX) & \cong \left \langle D^b(X), D^b(Z),\dots, D^b(Z) \right\rangle.  
\end{align*}  
This is related to the Stokes structure of the quantum connection. We refer the reader to \cite{Iritani:discrepant, Iritani:ICM} for relevant discussions. 
\end{remark} 

\begin{remark} 
Katzarkov-Kontsevich-Pantev-Yu (see e.g.~\cite{Kontsevich:IHES2023}) have announced a remarkable application of the preceding result to the question of rationality, e.g.~demonstrating the irrationality of a generic cubic fourfold.  
\end{remark} 

\begin{remark} 
Hinault-Yu-Zhang-Zhang \cite{HYZZ:framing} have shown that Theorems \ref{thm:projective_bundle_decomposition}, \ref{thm:blowup_decomposition} enable the reconstruction of genus-zero Gromov-Witten invariants of projective bundles and blowups, respectively (see also \cite[\S 5.8]{Iritani:monoidal} for blowups). This reconstruction is achieved using the Gromov-Witten invariants of the base $B$ or the original variety $X$ and the blowup centre $Z$, combined with topological data pertaining to the bundle $V\to B$ and the normal bundle $\cN_{Z/X}$.
\end{remark} 

\subsection{Strategy of the proof} 
We outline the proof strategy for blowups. The case of projective bundles will be discussed in Example  \ref{exa:projective_bundle_strategy}. 

The basic idea is to realize blowups as variation of GIT quotients. Let $W$ be the blowup of $X\times \PP^1$ along $Z\times \{0\}$. Let $T_\C = \C^\times$ be the rank-one torus and consider the $T_\C$-action on $W$ induced by the $T_\C$-action on $\PP^1$. We have $W^T = X \sqcup Z \sqcup \tX$. There are two non-empty smooth GIT quotients of $W$ with respect to this $T_\C$-action. 
\[
W/\!/T = X \quad \text{or} \quad \tX.  
\]
The moment map $\mu \colon W \to \R$ for the $T=S^1$-action is illustrated in Figure \ref{fig:W}. It has three critical values $\mu(X)$, $\mu(Z)$, $\mu(\tX)$. The GIT quotients $X$, $\tX$ can also be described as the symplectic quotients associated with the two regular chambers between these critical values. 

\begin{figure}[t] 
\centering 
\begin{tikzpicture} 
\draw[thick] (-3,3) -- (0,3); 
\draw (-3.5,3) node {$X$}; 
\draw (-1.5,3) -- (-1.5,1.5); 
\draw (-2.1,2.3) node {\footnotesize $Z\times \PP^1$}; 
\draw[shade] (-2,0) .. controls (-2,1) and (-1.7,1.5) .. (-1.5,1.5) .. controls (-1.3,1.5) and (-1,1) .. (-1,0);  
\filldraw (-1.5,1.5) circle [radius =0.05];
\draw (-1.8,1.55) node {\footnotesize $Z$};  
\draw (-1.5,0.5) node {$\hD$}; 
\draw [->] (0.5,1.5) -- (1.5,1.5); 
\draw (1,1.8) node {$\mu$}; 
\draw [->] (2.5,-0.3) -- (2.5,3.5); 
\draw (2.8,3.5) node {$\R$}; 
\filldraw (2.5,0) circle [radius=0.05]; 
\filldraw (2.5,1.5) circle [radius=0.05]; 
\filldraw (2.5,3) circle [radius = 0.05]; 

\draw[thick] (-3,0) -- (0,0); 
\draw (-3.5,0) node {$\tX$}; 

\draw (2.7,0.75) node[anchor=west] {\footnotesize $W/\!/T \cong \tX$}; 
\draw (2.7,2.25) node [anchor=west] {\footnotesize $W/\!/T \cong X$}; 

\draw (2.5,0) node [anchor=west] {\footnotesize $\mu(\tX)$}; 
\draw (2.5,1.5) node [anchor=west] {\footnotesize $\mu(Z)$}; 
\draw (2.5, 3) node [anchor=west] {\footnotesize $\mu(X)$}; 

\end{tikzpicture} 
\caption{
The master space $W$ and the moment map $\mu \colon W\to\R$. 
We have $X=X\times \{\infty\} \subset W$ and $\tX$ is the strict transform of $X\times \{0\}$. 
$\hD\cong \PP(\cN_{Z/X}\oplus 1)$ is the exceptional divisor of 
$W \to X\times \PP^1$ and $D=\hD \cap \tX$ is the exceptional divisor of $\tX \to X$.} 
\label{fig:W}
\end{figure} 

The equivariant ample cone $C_T(W)$ has two (open) chambers $C_X$, $C_\tX$ corresponding to the GIT quotients $X$ and $\tX$. As discussed in \S\ref{subsec:global_Kaehler_moduli}, this fan structure on $C_T(W)$ gives rise to a toric variety $\frM$, where $\QDM_T(W)$ forms a sheaf (see Figure \ref{fig:fan}). Near the cusps corresponding to $X$ and $\tX$, it should give rise to the quantum $D$-modules of $X$ and $\tX$ respectively, after appropriate modification and completion. This picture connects the quantum $D$-modules of $X$ and $\tX$.

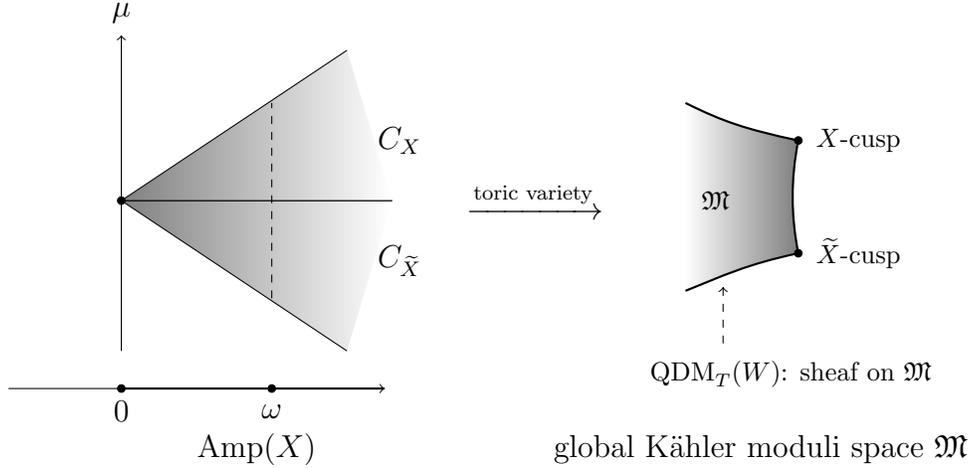
\begin{figure}[htbp]
  \centering 
   
  \begin{tikzpicture} 
  \draw[->] (-3,-2) -- (2,-2); 
  \draw[->] (-1.5,-1.5) -- (-1.5,2.7);
  \draw (-1.5,3) node {$\mu$}; 
  \draw[thick] (-1.5,-2) -- (2,-2); 
\draw (0.3,-2.8) node {$\Amp(X)$};  
   
  \draw[shade, left color=gray, right color=white] (1.5,-1.5) -- (-1.5,0.5) -- (2.1,0.5); 
  \draw [shade, left color=gray, right color = white]  (2.1,0.5) -- (-1.5,0.5) -- (1.5,2.5); 
  \filldraw (-1.5,0.5) circle [radius=0.05];  
  \filldraw (-1.5,-2) circle [radius = 0.05]; 
  
  \filldraw (0.5,-2) circle [radius =0.05]; 
 \draw[dashed] (0.5,-0.8) -- (0.5,1.8); 
  \draw (-1.5,-2.3) node {$0$}; 
  \draw (0.5, -2.3) node {$\omega$}; 
  
  \draw (2.2,-0.3) node {$C_\tX$}; 
  \draw (2.2,1.3) node {$C_X$}; 
  
\draw (4,0.5) node {$\xrightarrow{\text{toric variety}}$};
  
  \draw[shade, thick, right color=gray,left color=white] (6,1.8) .. controls (6.5,1.55) and (6.8,1.45) .. (7.5,1.3) .. controls (7.4,0.8) and (7.4,0.3) .. (7.5,-0.2) .. controls (6.8,-0.35) and (6.5,-0.5) ..  (6,-0.7); 
  \draw (6.4,0.5) node {\footnotesize $\frM$}; 
  \draw (7,-2.8) node {global K\"ahler moduli space $\frM$}; 

\draw (7.4,-1.8) node {\footnotesize $\QDM_T(W)$: sheaf on $\frM$}; 
\draw[->,dashed] (6.5,-1.4) -- (6.5,-0.65); 
 
  \filldraw (7.5,1.3) circle [radius=0.05]; 
  \filldraw (7.5,-0.2) circle [radius =0.05]; 
  
  \draw (8.3,1.3) node {\footnotesize $X$-cusp}; 
  \draw (8.3,-0.2) node {\footnotesize $\tX$-cusp}; 
\end{tikzpicture} 
\caption{The toric variety associated with the fan structure on $\overline{C_T(W)}$.} 
\label{fig:fan} 
\end{figure}  
 
The actual proof proceeds as follows (in outline): 
\begin{itemize} 
\item[(1)] 
We construct the following maps of $D$-modules as Fourier transformations (up to a change of bulk parameters): 
\[
  \xymatrix{
  & \QDM_T(W) \ar[dl]_{\cF_\tX} \ar[d]^{\cF_Z^i} \ar[dr]^{\cF_X} & \\
  \QDM(\tX) & \QDM(Z) & \QDM(X).  
  } 
\]
Here, $\cF_X$ and $\cF_\tX$ denote discrete Fourier transformations associated with the GIT quotients $X$ and $\tX$, respectively, as considered in Reduction Conjecture \ref{conj:reduction}. The maps $\cF_Z^i$, $i=1,\dots, r-1$, represent continuous Fourier transformations for the fixed component $Z$, derived from the Coates-Givental quantum Riemann-Roch theorem \cite{Coates-Givental}; see \S\ref{subsec:cont_FT} below. 

\item[(2)] Consider the submodule $\C[C_{\tX,\N}^\vee] \cdot \QDM_T(W) \subset \QDM_T(W)[Q_W^{-1}]$ generated by the action of the shift operators $\bS^\beta$ with $\beta \in C_{\tX,\N}^\vee$, where $Q_W$ denotes the Novikov variable of $W$. Let 
\[
\QDM_T(W)_\tX\sphat
\] 
denote its graded completion at the $\tX$-cusp\footnote{See \cite[\S 5.1]{Iritani:monoidal} for a precise definition. The definition of $C_{\tX,\N}^\vee\subset N_1^T(W)$ in \cite[\S 3.4]{Iritani:monoidal} is similar but slightly different from the one in \S\ref{subsec:global_Kaehler_moduli}.}, cf.~Remark \ref{rem:modification_completion}. 

\item[(3)] We show that the maps $\cF_X, \cF_\tX, \cF^i_Z$ extend to the completion $\QDM_T(W)_\tX\sphat$ and give rise to  isomorphisms: 
\begin{align*} 
\cF_\tX\colon \QDM_T(W)_\tX\sphat & \overset{\cong}{\longrightarrow} \QDM(\tX) \\ 
\cF_{X} \oplus \cF_Z^1 \oplus \cdots \oplus \cF_Z^{r-1} \colon \QDM_T(W)_\tX\sphat 
& \overset{\cong}{\longrightarrow} \QDM(X) \oplus \QDM(Z)^{\oplus (r-1)}.  
\end{align*} 
This concludes Theorem \ref{thm:blowup_decomposition}. 
\end{itemize} 

\begin{remark}
Notably, we encounter two distinct types of Fourier transformations: \emph{discrete} transformations associated with GIT quotients and \emph{continuous} transformations associated with fixed components. In the case at hand, the spaces $X$ and $\tX$ arise both as GIT quotients, and as fixed components. The crux of the proof lies in the fact that the discrete and continuous Fourier transformations for these spaces coincide via the \emph{residue theorem} (similar to the computation in Example \ref{exa:quantum_Cn}). This in particular implies the reduction conjecture for $W/\!/T = X, \tX$. 
\end{remark} 

\subsection{Continuous Fourier transformation associated with a fixed component}
\label{subsec:cont_FT}
Let $X$ be a smooth projective variety equipped with a $T_\C=\C^\times$-action. A key technique in the proof of Theorems \ref{thm:projective_bundle_decomposition} and \ref{thm:blowup_decomposition} is continuous Fourier transformation defined for a fixed component $F \subset X^{T_\C}$. This construction yields a map $\cL_X^{\rm eq} \to \cL_F$.

We briefly recall \emph{twisted} Gromov-Witten invariants in the sense of Coates-Givental \cite{Coates-Givental}. Let $V\to F$ be a vector bundle equipped with a fibrewise $T_\C$-action such that the $T_\C$-fixed locus equals the zero-section $F$. Let $F_{0,n,d}$ be the moduli stack of genus-zero, $n$-pointed, degree-$d$ stable maps to $F$. Consider the universal stable map: 
\[
\xymatrix{C_{0,n,d} \ar[r]^{f} \ar[d]_{\pi} & F \\ 
F_{0,n,d} 
}
\]
and define the $K$-group element $V_{0,n,d} :=\pi_* f^*V \in K(F_{0,n,d})$. Let $e_T$ denote the $T$-equivariant Euler class. The $(e_T^{-1},V)$-\emph{twisted Gromov-Witten invariants} of $F$ are defined by replacing the virtual class $[F_{0,n,d}]_{\rm vir}$ with $[F_{0,n,d}]_{\rm vir} \cap e_T(V_{0,n,d})^{-1}$. Together with the twisted Poincar\'e pairing $(\alpha,\beta)^{\rm tw} = \int_F \alpha \cup \beta \cup e_T(V)^{-1}$,  
they define the $(e_T^{-1},V)$-twisted quantum product, the $(e_T^{-1},V)$-twisted fundamental solution and the $(e_T^{-1},V)$-twisted Givental cone $\cL^{\rm tw}_F$, etc. 

\begin{remark} 
We regard the twisted Givental cone $\cL^{\rm tw}_F$ as a Lagrangian submanifold of the twisted Givental space 
\[
\cH_{\rm tw}^F := H^*(F) \otimes R(\!(z)\!)[\![Q_F]\!]  
\]
whose symplectic form is defined by the twisted Poincar\'e pairing, where $R = \C[\lambda,\lambda^{-1}]$ and $Q_F$ is the Novikov variable for $F$. In the twisted theory, we allow formal Laurent series in $z$ with possibly infinite positive powers (instead of infinite negative powers),  cf.~\S\ref{subsec:Givental_cone_J-function}. For details, see \cite[\S 2.8]{Iritani:monoidal}.  
\end{remark} 

Introduce the $\Gamma$-factor $G_V(\lambda)$ as 
\[
G_V (\lambda):= \prod_{\delta} \frac{1}{\sqrt{-2\pi z}} (-z)^{-\delta/z} \Gamma(-\delta/z) 
\]
where the product ranges over $T$-equivariant Chern roots $\delta$ of $V$. 
We regard $G_V(\lambda)$ as an $\End(H^*(F))$-valued (multi-valued) analytic function of $(z,\lambda)$, where cohomology classes act on cohomology by the cup product. The quantum Riemann-Roch theorem of Coates-Givental \cite{Coates-Givental} implies the following result at genus zero: 
\begin{theorem}[Coates-Givental \cite{Coates-Givental}] 
Let $\Delta\colon \cH_F \to \cH_F^{\rm tw}$ be the symplectic operator arising from the Stirling approximation of $G_V(\lambda)^{-1}$ as $z\to 0$. Then $\cL^{\rm tw}_F = \Delta \cL_F$. 
\end{theorem}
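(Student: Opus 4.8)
The statement is the genus-zero case of the Coates--Givental quantum Riemann--Roch theorem \cite{Coates-Givental}, specialized to the multiplicative characteristic class $\mathbf{s}(x) = 1/(\lambda+x)$, i.e.~the inverse $T$-equivariant Euler class where $\lambda$ is the weight of the fibrewise $T$-action. The plan is therefore in two halves: first invoke the abstract theorem, then identify the resulting loop-group element with the operator $\Delta$ built out of $G_V(\lambda)^{-1}$. Concretely: the $(e_T^{-1},V)$-twisted theory of $F$ is an $\mathbf{s}$-twisted theory in the sense of \cite{Coates-Givental}, and the genus-zero part of quantum Riemann--Roch says $\cL^{\rm tw}_F = \widehat{\mathbf s}\,\cL_F$, where $\widehat{\mathbf s}$ is the canonical symplectic transformation determined by $\mathbf{s}$ and $\ch_\bullet(V)$ — explicitly an exponential of a series in $\ch_l(V)$, Bernoulli numbers $B_{2m}$ and odd powers $z^{2m-1}$, together with a dilaton/polarization shift coming from the $s_0$-part of $\ln\mathbf{s}$. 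So it remains to prove $\widehat{\mathbf s} = \Delta$.

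The second half is an explicit computation with Stirling's series. I would apply the asymptotic expansion $\ln\Gamma(w) \sim (w-\tfrac12)\ln w - w + \tfrac12\ln(2\pi) + \sum_{m\ge1}\tfrac{B_{2m}}{2m(2m-1)}\,w^{1-2m}$ with $w = -\delta/z$, summed over the $T$-equivariant Chern roots $\delta$ of $V$, and incorporate the prefactors $(-2\pi z)^{-1/2}(-z)^{-\delta/z}$. After simplification this yields
\[
\ln G_V(\lambda)\ \sim\ \sum_{\delta}\Bigl[\tfrac{\delta}{z}\,(1-\ln\delta)\ -\ \tfrac12\ln\delta\ +\ \sum_{m\ge1}\tfrac{B_{2m}}{2m(2m-1)}\bigl(-\tfrac{z}{\delta}\bigr)^{2m-1}\Bigr].
\]
Expanding $\ln\delta$ and $\delta^{1-2m}$ in the non-equivariant Chern roots over $\lambda$ and recollecting by $\ch_l(V)=\tfrac1{l!}\sum_\delta\delta^l$ reproduces exactly the exponent of $\widehat{\mathbf s}$ for $\mathbf{s}(x)=(\lambda+x)^{-1}$: the coefficient $\tfrac{B_{2m}}{2m(2m-1)}=\tfrac{B_{2m}}{(2m)!}\,(2m-2)!$ is what combines with the logarithmic derivatives of $\ln(\lambda+x)^{-1}$ to give the Coates--Givental coefficients, exactly as in the computation relating the (equivariant) $\hGamma$-class to the quantum Riemann--Roch operator (cf.~\cite{Galkin-Iritani}). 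The $z^{-1}$ piece $\sum_\delta\tfrac{\delta}{z}(1-\ln\delta)$ and the $z^0$ piece $-\tfrac12\sum_\delta\ln\delta$ (together with the $(-2\pi z)^{-1/2}$ factors) match the lower-triangular and diagonal parts of $\widehat{\mathbf s}$, i.e.~the dilaton shift and the determinant normalization. This establishes $\Delta=G_V(\lambda)^{-1}\text{-asymptotics}=\widehat{\mathbf s}$ up to the inverse/sign conventions of \cite{Coates-Givental}.

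Finally, one should check that $\Delta$ is genuinely a symplectic isomorphism $\cH_F\to\cH_F^{\rm tw}$, i.e.~that it intertwines $\Omega$ with the twisted symplectic form built from $(\cdot,\cdot)^{\rm tw}=\int_F(\cdot)(\cdot)\,e_T(V)^{-1}$; this amounts to $G_V(\lambda,-z)^{\star}G_V(\lambda,z)$ being multiplication by $e_T(V)^{-1}$, which follows from the reflection formula $\Gamma(w)\Gamma(1-w)=\pi/\sin\pi w$ and $\Gamma(1-w)=-w\Gamma(-w)$ once the exponentially small factor $1/\sin(\pi\delta/z)$, invisible in the asymptotic power series as $z\to0$, is discarded. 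Combining the three steps gives $\cL^{\rm tw}_F=\Delta\,\cL_F$; the only care needed about ambient spaces is that $\Delta$ contains an exponential of positive powers of $z$, so it indeed lands in the completion $\cH_{\rm tw}^F$ allowing infinite positive powers of $z$, which is precisely the convention fixed for the twisted theory. (An equivalent route is to prove $\Delta\,M_F$ equals the twisted fundamental solution $M^{\rm tw}_F$ up to a change of bulk parameter, but this reduces to the same Stirling computation.)

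The hard part is the bookkeeping in the second step: verifying that the Bernoulli--Stirling series produced by $\prod_\delta(-z)^{-\delta/z}\Gamma(-\delta/z)$ matches the Coates--Givental transformation for $\mathbf{s}(x)=(\lambda+x)^{-1}$ term by term, with correct treatment of the $z^{-1}$ mirror-map piece, the $z^0$ determinant piece, and the normalizing $(-2\pi z)^{-1/2}$ factors — everything else is either a citation or a short reflection-formula check.
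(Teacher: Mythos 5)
Your proposal is correct and follows essentially the same route as the paper, which simply invokes the genus-zero Coates--Givental quantum Riemann--Roch theorem for the twist by the inverse equivariant Euler class $\bs(x)=(\lambda+x)^{-1}$ and identifies the resulting symplectic loop-group element with $\Delta$ via the Stirling expansion of $G_V(\lambda)^{-1}$ (the content of the remark following the theorem). Your explicit Stirling bookkeeping, including the odd powers $z^{2m-1}$ with coefficients $\tfrac{B_{2m}}{2m(2m-1)}$ and the dilaton-shift and determinant pieces, is exactly the verification the paper leaves to the cited reference.
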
 

\begin{remark} 
The operator $\Delta$ is given, up to a multiplicative constant, by 
\[
\Delta \propto \prod_{\delta} \exp\left( \frac{\delta \log \delta - \delta}{z} + \frac{1}{2} \log \delta + \sum_{m\ge 2} \frac{B_m}{m(m-1)} \left(\frac{z}{\delta}\right)^m \right).  
\]
\end{remark} 

The twisted Gromov-Witten theory of a $T$-fixed component $F$ plays a role in this context through virtual localization. The following is an unpublished result due to Givental, explained by Brown \cite{Brown:toric_fibration} and Fan-Lee \cite{Fan-Lee:projective_bundle}. 
\begin{theorem}[Givental, Brown \cite{Brown:toric_fibration}, Fan-Lee \cite{Fan-Lee:projective_bundle}]   
\label{thm:restriction_lies_in_the_cone} 
Let $X$ be a smooth projective variety with $T_\C$-action. Let $F$ be a component of the fixed locus $X^{T_\C}$. For a point $\bbf \in \cL^{\rm eq}_X$ of the equivariant Givental cone, its restriction $\bbf|_F$ to $F$ lies in the $(e_T^{-1}, \cN_{F/X})$-twisted Givental cone $\cL^{\rm tw}_F$ of $F$ after Laurent expansion at $z=0$.   
\end{theorem}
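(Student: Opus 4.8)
The plan is to prove Theorem~\ref{thm:restriction_lies_in_the_cone} by the virtual localization formula applied to the equivariant Gromov--Witten invariants of $X$, exactly the technique that underlies the intertwining property of Proposition~\ref{prop:intertwining_property}. Recall that a point $\bbf \in \cL_X^{\rm eq}$ can be written as $\bbf = z M_X(\tau) v(z)$ for some $\tau \in H^*_T(X)[\![Q]\!]$ and $v(z) \in \cH_+$; more concretely, it suffices to prove the statement for the big $J$-function $J_X(\tau,z) = M_X(\tau)1$ and its $z$-derivatives, since these generate the cone as a $\C[z][\![Q]\!]$-module and the claim is $\C[z][\![Q]\!]$-linear once we work over the twisted coefficient ring. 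So the real task is: show that $J_X(\tau,z)|_F$, after Laurent expansion at $z=0$, lies in $\cL^{\rm tw}_F$ where the twist is by $(e_T^{-1}, \cN_{F/X})$.

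The key steps, in order. First, write $J_X(\tau,z)$ as a sum of descendant integrals over $\overline{M}_{0,n+2,d}(X)$ and apply the $T_\C = \C^\times$-virtual localization formula of Graber--Pandharipande to each term. The fixed loci of the induced $\C^\times$-action on $\overline{M}_{0,n+2,d}(X)$ decompose into components indexed by decorated graphs whose vertices map to fixed components of $X$ and whose edges record the non-constant $\C^\times$-invariant components (multiple covers of orbits). Second, restrict the whole expression to a single fixed component $F \subset X^{T_\C}$: this amounts to keeping only those graph contributions whose ``special'' vertex (the one carrying the marked point where we read off the cohomology value) maps into $F$. Third, resum. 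The contribution of the vertex mapping to $F$ assembles into a genus-zero descendant integral over $\overline{M}_{0,*,d}(F)$; the edges emanating from this vertex contribute, after summation over cover degrees, precisely the inverse equivariant Euler class factors $e_T(\cN_{F/X}\text{-twist})^{-1}$ of the bundle classes $V_{0,n,d} = \pi_* f^* \cN_{F/X}$ appearing in the Coates--Givental twisted theory; the remaining ``outer'' part of each graph reorganizes into a point of the ordinary (untwisted) cone of $F$ after the change of coefficients. The upshot is an identity expressing $J_X(\tau,z)|_F$ as a $\C[z][\![Q]\!]$-linear combination of twisted $J$-function-type data of $F$, hence a point of $\cL^{\rm tw}_F$ by the characterization of the twisted cone as the union of semi-infinite subspaces $zM^{\rm tw}_F(\sigma)\cH_+^{\rm tw}$. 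The last point to check is bookkeeping of the $z$-expansion: on $X$ the relevant series are Laurent at $z=\infty$, but the inverse Euler factors $1/e_T(\cN_{F/X})$ restricted to $F$ are rational in $z$ and their expansion at $z=0$ produces the formal-Laurent-in-positive-powers behaviour that defines $\cH^F_{\rm tw}$, which is exactly the expansion prescribed in the statement.

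The main obstacle I expect is not the localization computation per se --- that is by now standard --- but rather the careful identification of the edge-and-outer-vertex contributions with genuine twisted Gromov--Witten data of $F$, as opposed to some auxiliary ``recursion kernel'' expression. The subtlety is that the fixed-point graphs of $\overline{M}_{0,n+2,d}(X)$ involve, besides the distinguished vertex over $F$, also vertices over \emph{other} fixed components $F' \neq F$, and one must argue that the full sum over all such graphs, for fixed behaviour at the distinguished vertex, reconstructs a point of the untwisted cone $\cL_F$ (equivalently, that the ``non-$F$'' part of the graph sum is already accounted for by the generic tangent-space structure of $\cL_F$). This is the step where one invokes the theorem as formulated by Givental and written up by Brown and Fan--Lee: it is essentially a fixed-point-localization avatar of the fact, used repeatedly in mirror-symmetry proofs, that a localization residue of a point on $\cL_X^{\rm eq}$ lands on $\cL_F^{\rm tw}$. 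I would organize the argument so that this reduction is isolated as the single nontrivial lemma, proving it by comparing the recursion relations (the ``$z$-expansion ancestor/descendant recursions'') satisfied by both sides, and leave the twisting-factor identification and the $z=0$ expansion bookkeeping as routine verifications building on Proposition~\ref{prop:intertwining_property} and the Coates--Givental quantum Riemann--Roch theorem quoted above.
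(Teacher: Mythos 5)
The paper does not actually prove this statement: it is quoted as an unpublished result of Givental, with the write-ups of Brown \cite{Brown:toric_fibration} and Fan--Lee \cite{Fan-Lee:projective_bundle} cited, and the surrounding text only records how the theorem is used (to obtain Corollary \ref{cor:cont_FT}). So there is no in-paper argument to compare against; your virtual-localization outline is indeed the route taken in those references, and as a plan it identifies the right mechanism (graph sums for the induced $\C^\times$-action on the moduli of stable maps, with the distinguished vertex over $F$ producing twisted data of $F$).

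As a proof, however, the write-up has two genuine soft spots. First, the opening reduction ``it suffices to treat $J_X$ and its $z$-derivatives because the claim is $\C[z][\![Q]\!]$-linear'' is not correct as stated: membership in a cone is not a linear condition. What is really needed is compatibility with the overruled structures on both sides, i.e.\ that restriction sends each ruling subspace $zM_X(\tau)\cH_+$ of $\cL_X^{\rm eq}$ into a ruling subspace $zM^{\rm tw}_F(\sigma)\cH_+^{\rm tw}$ of $\cL^{\rm tw}_F$ --- and establishing this is essentially the content of the theorem, not a preliminary simplification. Second, the reassembly step is described inaccurately: the non-distinguished parts of a localization graph do not ``reorganize into a point of the untwisted cone of $F$.'' Rather, the vertex over $F$ yields $(e_T^{-1},\cN_{F/X})$-twisted descendant integrals of $F$, with the twist coming from the virtual normal bundle of the fixed locus (the pushforward of the pullback of $\cN_{F/X}$ along the universal map), not merely from summing edge multiple covers; the edge-plus-outer-graph contributions are then absorbed into the dilaton-shifted \emph{input} of the twisted potential, and it is their characteristic poles at $z=\alpha/m$ (regular at $z=0$ and $z=\infty$) that make this input admissible after the Laurent expansion at $z=0$ prescribed in the statement. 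This absorption is precisely the nontrivial lemma you defer at the end, so what you have is a faithful plan in the spirit of Brown and Fan--Lee rather than a complete argument; to finish it you would need to carry out that identification (for instance by the recursion/uniqueness characterization of points on the twisted cone), which is where all the work lies.
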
 

Since the $J$-function $z J_X(\tau,z)$ (multiplied by $z$) lies in the Givental cone $\cL^{\rm eq}_X$ of $X$, it follows from Theorem \ref{thm:restriction_lies_in_the_cone} that 
\[
\Delta^{-1} z J_X(\tau,z)|_F \sim G_{\cN_{F/X}}(\lambda) z J_X(\tau,z)|_F
\] 
lies in the Givental cone $\cL_F$ of $F$. This is a family of elements on $\cL_F$ parametrized by the equivariant parameter $\lambda$. We consider their ``average'' with respect to $\lambda$: 
\begin{equation} 
\label{eq:cont_FT}
\int e^{\lambda \log q/z} G_{\cN_{F/X}}(\lambda) z J_X(\tau,z)|_F d\lambda.  
\end{equation} 
\begin{corollary}[\cite{Iritani:monoidal}]  
\label{cor:cont_FT}
The formal asymptotic expansion of the integral \eqref{eq:cont_FT} as $z\to 0$ lies in the Givental cone of $F$.
\end{corollary}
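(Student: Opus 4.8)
The plan is to combine the ``overruled'' description \eqref{eq:overruled} of the Givental cone with a formal Laplace (steepest-descent) analysis of the $\lambda$-integral, in the spirit of the computation in Example~\ref{exa:quantum_Cn}. The starting point is that, by Theorem~\ref{thm:restriction_lies_in_the_cone} together with the Coates--Givental quantum Riemann--Roch theorem (which gives $\cL^{\rm tw}_F=\Delta\,\cL_F$), for each value of the equivariant parameter $\lambda$ the formal series
\[
\Xi_\lambda(\tau,z):=G_{\cN_{F/X}}(\lambda)\,zJ_X(\tau,z)\big|_F\ \sim\ \Delta^{-1}\,zJ_X(\tau,z)\big|_F
\]
obtained by Stirling-expanding the $\Gamma$-factors lies on the untwisted cone $\cL_F$ of $F$; the task is to show that averaging the $\Xi_\lambda$ against $e^{\lambda\log q/z}$ (with the appropriate normalization of the $\lambda$-measure) and passing to the $z\to0$ asymptotic expansion preserves membership in $\cL_F$, now regarded over the Novikov ring of $F$ extended by the variable $q$.

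Since $\Xi_\lambda\in\cL_F$, the overruled description \eqref{eq:overruled} lets me write $\Xi_\lambda=z\,M_F\!\big(t(\lambda,\tau)\big)\,\mathbf{w}(\lambda,\tau,z)$ with $t(\lambda,\tau)$ a formal family of cohomology classes on $F$ (independent of $z$) and $\mathbf{w}\in\cH_+^F=H^*(F)[z][\![Q_F]\!]$. The scalar component $t_0(\lambda,\tau)$ of $t$ carries the genuine $z\to0$ phase, and its $\lambda$-nonlinearity comes from the $\sum_\delta w_\delta\lambda\log(w_\delta\lambda)$ terms in the Stirling expansion of $G_{\cN_{F/X}}$, exactly as in Example~\ref{exa:quantum_Cn}. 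Thus the leading exponent of the integrand $e^{\lambda\log q/z}\Xi_\lambda$ as $z\to0$ is $z^{-1}\psi(\lambda)$ with $\psi(\lambda)=\lambda\log q+t_0(\lambda,\tau)$, up to a cohomology-valued term absorbed into $M_F$. I would check that $\psi'(\lambda)=0$ is an algebraic equation with finitely many roots $\lambda_*=\lambda_*(q,\tau)$, non-degenerate for generic $(q,\tau)$ --- this uses the explicit shape of $G_{\cN_{F/X}}$ and the String/Divisor structure of $J_X|_F$ --- and then apply the formal stationary-phase expansion at each $\lambda_*$.

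To see that the resulting $z$-series lands on $\cL_F$, I would pull the $\lambda$-independent operator $M_F\!\big(t(\lambda_*,\tau)\big)$ out of the integral and write $M_F\!\big(t(\lambda,\tau)\big)=M_F\!\big(t(\lambda_*,\tau)\big)\,N(\lambda)$ with $N=\mathrm{id}+O(z^{-1})$ and $N(\lambda_*)=\mathrm{id}$. The intertwining/connection identity of Proposition~\ref{prop:intertwining_property} gives $\partial_\lambda N(\lambda)=z^{-1}N(\lambda)\big(\partial_\lambda t(\lambda)\star_{t(\lambda)}\big)$, so each extra $\partial_\lambda$ inside the integral produces a factor $z^{-1}$ times an operator of quantum multiplication by a cohomology class, and the latter preserves $\cH_+^F$. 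Since every term of the stationary-phase series carries compensating positive powers of $z$, the expansion of \eqref{eq:cont_FT} collects into $z\,M_F\!\big(\widehat\sigma(q,\tau)\big)\,\widehat{\mathbf{w}}(q,\tau,z)$ with $\widehat{\mathbf{w}}\in\cH_+^F$, where the scalar prefactor $e^{\psi(\lambda_*)/z}$ is absorbed by the String equation as a shift of the $H^0(F)$-component of $\widehat\sigma$. By \eqref{eq:overruled} this exhibits the expansion as a point of $\cL_F$.

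The main obstacle is the rigour of the last two paragraphs in the formal (Novikov-ring) setting: interpreting the ``asymptotic expansion as $z\to0$'' of an $H^*(F)\otimes\C[\![Q_F,q]\!]$-valued integrand, proving finiteness and non-degeneracy of the critical set $\{\lambda_*(q,\tau)\}$, and --- most substantively --- verifying that the negative powers of $z$ produced by the individual stationary-phase terms cancel, so that the limit is genuinely of $M_F$-graph type (equivalently, that the ``second fundamental form'' corrections of the $\lambda$-family on $\cL_F$ integrate to a point still on the Lagrangian cone). An alternative route --- the one actually used for the master space $W$ in the proof strategy above --- is to evaluate \eqref{eq:cont_FT} by the residue theorem: closing the $\lambda$-contour and summing the residues at the poles of the $\Gamma$-factors in $G_{\cN_{F/X}}$ turns the continuous transform into a discrete sum of the shape appearing in Example~\ref{exa:projective_bundle}, which lies on $\cL_F$ by the quantum Riemann--Roch (mirror-theorem) argument sketched there; matching the two versions then reduces to controlling the contour at infinity and justifying the interchange of summation with asymptotic expansion.
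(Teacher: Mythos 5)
Your first two steps are exactly the paper's: Theorem \ref{thm:restriction_lies_in_the_cone} combined with quantum Riemann--Roch ($\cL^{\rm tw}_F=\Delta\cL_F$) shows that $\lambda\mapsto G_{\cN_{F/X}}(\lambda)\,zJ_X(\tau,z)|_F$ is a family of points of $\cL_F$, so the entire content of Corollary \ref{cor:cont_FT} is the remaining claim that the formal stationary-phase expansion of the $\lambda$-average \eqref{eq:cont_FT} again lies on $\cL_F$. That is precisely where your argument has a gap rather than a proof. Writing the family as $zM_F(t(\lambda))w(\lambda)$ with $w(\lambda)\in\cH_+$, the identity you quote from Proposition \ref{prop:intertwining_property} gives $\partial_\lambda\bigl(zM_F(t(\lambda))w(\lambda)\bigr)=zM_F(t(\lambda))\bigl(\partial_\lambda w+z^{-1}(\partial_\lambda t)\star_{t(\lambda)}w\bigr)$: each $\lambda$-derivative falling on the integrand costs a factor $z^{-1}$, while the Gaussian integration in the stationary-phase expansion supplies only $z^{+1}$ for every two derivatives. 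The naive term-by-term bookkeeping therefore places the order-$k$ term in $zM_F(\cdot)\,z^{-k}\cH_+$ rather than in $zM_F(\cdot)\cH_+$, and your sentence that ``every term of the stationary-phase series carries compensating positive powers of $z$'' asserts exactly the cancellation that has to be proved; it depends on the criticality of the full phase (including the $H^0$- and $H^2$-parts of $t(\lambda)$ extracted via the String and Divisor equations) and on a careful formal set-up over the Novikov ring extended by fractional powers of $q$. This cancellation is the substance of the corollary; in \cite{Iritani:monoidal} it is isolated as a separate lemma about formal Laplace/stationary-phase expansions of families on an overruled cone, and your proposal acknowledges but does not supply it.

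The alternative route you mention (closing the $\lambda$-contour and summing residues of the $\Gamma$-factors) does not repair this in general. The residue comparison between the continuous and discrete Fourier transforms is available only when $F$ is tied to a GIT quotient, as for the cusps $X$ and $\tX$ of the master space $W$; and the statement that the resulting discrete sum lies on a Givental cone is Conjecture \ref{conj:reduction}, which is known only in special situations such as Example \ref{exa:projective_bundle}, where it is proved by a quantum Riemann--Roch/mirror-theorem argument specific to that geometry. For an arbitrary fixed component $F\subset X^{T_\C}$ there is no such quotient or mirror theorem to appeal to, so the residue route cannot substitute for the missing stationary-phase lemma.
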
 

We can replace $z J_X$ with any elements lying on $\cL_X^{\rm eq}$ and obtain a map $\cL_X^{\rm eq} \to \cL_F$ by the continuous Fourier transformation. 

The Fourier transform \eqref{eq:cont_FT} is natural from a viewpoint of the difference module structure on $\QDM_T(X)$. Let $\cS := \hcS^{\sigma_F(1)}$ denote the shift operator \eqref{eq:def_hcS} on the Givental space associated with the class $\sigma_F(1)\in N_1^T(X)$ (see above Proposition \ref{prop:intertwining_property}), which gives a splitting of: 
\[
\xymatrix{
0 \ar[r] &  N_1(X) \ar[r] & \ar[r] N_1^T(X) & \ar[r] \Hom(S^1,T) =\Z \ar[r] \ar@/_14pt/@{.>}[l]_{\sigma_F} & 0. 
}
\]
\begin{lemma} 
The map $\bbf \mapsto G_{\cN_{F/X}}(\lambda) \bbf|_F$ with $\bbf \in H^*_\hT(X)_{\rm loc}$ intertwines the shift operator $\cS$ with the trivial shift operator $e^{-z \partial_\lambda}$. 
\end{lemma}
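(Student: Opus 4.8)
The plan is to reduce the claim to the functional equation $\Gamma(x+1)=x\Gamma(x)$, as anticipated in the remark just before the lemma. Write $V=\cN_{F/X}$ and decompose $V=\bigoplus_\alpha V_\alpha$ into $T$-eigenbundles as in \eqref{eq:def_hcS}, so the $T$-equivariant Chern roots of $V$ are the classes $\rho_{F,\alpha,j}+\alpha$ ($j=1,\dots,\rank V_\alpha$) and
\[
G_V(\lambda)=\prod_\alpha\prod_j \frac{1}{\sqrt{-2\pi z}}\,(-z)^{-(\rho_{F,\alpha,j}+\alpha)/z}\,\Gamma\!\left(-\tfrac{\rho_{F,\alpha,j}+\alpha}{z}\right),
\]
understood as a formal power series in the nilpotent classes $\rho_{F,\alpha,j}$ with coefficients analytic in $(z,\lambda)$. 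Since $T_\C=\C^\times$ here, $\lambda$ is a single variable; writing $\alpha=(\alpha\cdot\ovbeta)\lambda$ with $\ovbeta:=\overline{\sigma_F(1)}=1\in\Hom(S^1,T)$, the substitution $\lambda\mapsto\lambda-z$ sends $\alpha$ to $\alpha-(\alpha\cdot\ovbeta)z$ and fixes the $\rho_{F,\alpha,j}$.

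First I would compute the quotient $G_V(\lambda-z)/G_V(\lambda)$ factor by factor. The constants $1/\sqrt{-2\pi z}$ are unaffected by the shift; each $(-z)^{-\delta/z}$ contributes a factor $(-z)^{\alpha\cdot\ovbeta}$; and each $\Gamma$-factor changes by $\Gamma(x+\alpha\cdot\ovbeta)/\Gamma(x)$ with $x=-(\rho_{F,\alpha,j}+\alpha)/z$, which by iterating $\Gamma(x+1)=x\Gamma(x)$ (or its inverse when $\alpha\cdot\ovbeta<0$) is a finite product. Multiplying the $(-z)$-powers back in and simplifying, the $(\alpha,j)$-contribution to $G_V(\lambda-z)/G_V(\lambda)$ becomes
\[
\frac{\prod_{c\le 0}(\rho_{F,\alpha,j}+\alpha+cz)}{\prod_{c\le -\alpha\cdot\ovbeta}(\rho_{F,\alpha,j}+\alpha+cz)},
\]
which is exactly the hypergeometric prefactor appearing in formula \eqref{eq:def_hcS} for $\hcS^{\sigma_F(1)}$ restricted to the component $F$. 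Equivalently, $G_V(\lambda)$ times that prefactor equals $G_V(\lambda-z)$.

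The last ingredient is the Novikov prefactor $Q^{\sigma_F(1)+\sigma_F(-1)}$ occurring in $(\hcS^{\sigma_F(1)}\bbf)|_F$ (the term $Q^{\beta+\sigma_F(-\ovbeta)}$ of \eqref{eq:def_hcS} with $F$ in place of the generic fixed component). The point is that the splitting $k\mapsto\sigma_F(k)$ attached to a single \emph{fixed} component $F$ — unlike the maximal-component splitting $\sigma_{\rm max}$, which is nonlinear — is a group homomorphism: $\sigma_F(k)$ is the image of $k\in H_2(BT)$ under the pushforward $H_2(BT)\to H_2(X_T)=N_1^T(X)$ along the section of $X_T\to BT$ determined by a $T$-fixed point of $F$. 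Hence $\sigma_F(1)+\sigma_F(-1)=\sigma_F(0)=0$ and the prefactor is $1$. Combining the three observations, for $\bbf\in H^*_\hT(X)_{\rm loc}$ the component $(\cS\bbf)|_F$ equals the hypergeometric prefactor of \eqref{eq:def_hcS} times $e^{-z\partial_\lambda}(\bbf|_F)$, and therefore
\[
G_V(\lambda)\,(\cS\bbf)|_F = G_V(\lambda-z)\,\big(e^{-z\partial_\lambda}\bbf|_F\big) = e^{-z\partial_\lambda}\big(G_V(\lambda)\,\bbf|_F\big),
\]
which is the asserted intertwining.

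I do not expect a genuine obstacle: the content is just the $\Gamma$-functional equation together with the linearity of $\sigma_F$. The only care needed is bookkeeping — matching the sign of $z$ and the orientation of $\ovbeta=\overline{\sigma_F(1)}$ in \eqref{eq:def_hcS} against the argument $-\delta/z$ of $\Gamma$ in $G_V$, separately checking the cases $\alpha\cdot\ovbeta>0$, $=0$, $<0$, and observing that all manipulations are legitimate as identities of formal power series in the nilpotent Chern roots, since $\Gamma(x+1)=x\Gamma(x)$ holds identically in $x$.
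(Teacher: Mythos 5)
Your proposal is correct and follows exactly the route the paper intends (and only sketches, via the remark after Proposition \ref{prop:intertwining_property} and the references there): the hypergeometric prefactor in \eqref{eq:def_hcS} restricted to $F$ is precisely $G_{\cN_{F/X}}(\lambda-z)/G_{\cN_{F/X}}(\lambda)$ by iterating $\Gamma(x+1)=x\Gamma(x)$, and the Novikov prefactor is $Q^{\sigma_F(1)+\sigma_F(-1)}=1$ because $\sigma_F$, being the pushforward along the section of $X_T\to BT$ determined by a fixed point of $F$, is a genuine group-theoretic splitting. Your bookkeeping of the cases $\alpha\cdot\ovbeta\gtrless 0$ and the treatment of the nilpotent Chern roots as formal expansion variables are both sound (the case $\alpha\cdot\ovbeta=0$ does not occur for normal weights of a fixed component), so there is nothing to add.
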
 
In particular, the Fourier transformation $\bbf \mapsto \int e^{\lambda \log q/z} G_{\cN_{F/X}}(\lambda) \bbf|_F d\lambda$ intertwines $\cS$ with $q$ and $\lambda$ with $z q\partial_q$, i.e.~\emph{serves as a solution to the Fourier dual of $\QDM_T(X)$}. 

\begin{example} 
We explain the formal asymptotic expansion of the integral \eqref{eq:cont_FT}. Let $\cN_{F/X} = \bigoplus_j \cN_j$ be the $T$-weight decomposition where $T_\C$ acts on $\cN_j$ by a non-zero weight $w_j \in \Z$. Then we have 
\[
G_{\cN_{F/X}}(\lambda) \sim \frac{1}{\sqrt{e_T(\cN_{F/X})}} e^{-\frac{1}{z} \sum_j \rank(\cN_j) (w_j \lambda \log(w_j\lambda) - w_j\lambda)} (1+ O(z)) 
\]    
by the Stirling approximation. Write $W(\lambda) = \sum_j \rank(\cN_j) (w_j \lambda \log(w_j\lambda) - w_j\lambda)$ for the exponent function. For $\bbf \in H^*_\hT(X)_{\rm loc}$, we can calculate the formal asymptotics of the integral 
\[
\cI := 
\int e^{\lambda \log q/z} G_{\cN_{F/X}}(\lambda) \bbf|_F d\lambda 
\sim 
\int e^{(\lambda \log q - W(\lambda))/z} \frac{1+O(z)}{\sqrt{e_T(\cN_{F/X})}} \bbf|_F d\lambda 
\]
using the stationary phase approximation (method of steepest descent). Let $\varphi(\lambda) := -\lambda \log q + W(\lambda)$ be the phase function\footnote{In physics literature (see, e.g.~\cite{Hori-Romo}), $W(\lambda)$ is called effective twisted superpotential.}. When $c_F := \sum_j \rank(N_j) w_j$ is nonzero, $\varphi(\lambda)$ has $|c_F|$ many critical points given by 
\[
\lambda_{\rm crit} = \left[ \left(\prod_j w_j^{-\rank(\cN_j) w_j} \right) q  \right]^{1/c_F}. 
\]
By choosing one of the critical points $\lambda_{\rm crit}$, we obtain the asymptotics: 
\[
\cI \sim \sqrt{2\pi z}\, e^{-\varphi(\lambda_{\rm crit})/z} \scrI
\]
with $\scrI$ being an asymptotic series in $z$ belonging to 
\[
q^{-c_1(\cN_{F/X})/(c_F z) - (r_F-1)/(2c_F)} H^*(F)[q^{\pm 1/c_F}] (\!(z)\!)
\]
with $r_F = \rank \cN_{F/X}$. Corollary \ref{cor:cont_FT} says that $\scrI$ belongs to the Givental cone $\cL_F$ of $F$ if we start with $\bbf \in \cL^{\rm eq}_X$. 
\end{example} 

\begin{example}[cf.~Examples \ref{exa:quantum_Cn}, \ref{exa:reduction_conjecture_projective_space}] 
\label{exa:projective_bundle_strategy} 
Let $X$ be $\C^n$ equipped with the diagonal $T_\C=\C^\times$-action. The continuous Fourier transform of $J_X = 1$ associated with the fixed point $0\in X$ is given by
\[
\int \frac{1}{(-2\pi z)^{n/2}} e^{\lambda \log q/z}(-z)^{-n\lambda/z}\Gamma(-\lambda/z)^n d\lambda.
\]
This is essentially the same as the Fourier transform \eqref{eq:MB_projective} of the equivariant quantum volume of $X=\C^n$ in Example \ref{exa:quantum_Cn}. 
By considering the asymptotic expansion at the $n$ critical points $\lambda_{\rm crit}=q^{1/n}$, we construct $n$ maps $\cL_X^{\rm eq} \to \cL_\pt$, or equivalently, $n$ solutions $\cF_j \colon \QDM_T(X) \to \sigma_j^*\QDM(\pt)$, $j=1,\dots,n$.

By Claim \ref{claim:Fourier_projective} (see also Example \ref{exa:reduction_conjecture_projective_space}), we have $\QDM_T(X) \cong \QDM(\mathbb{P}^{n-1})$. Combining this with the above, we obtain a decomposition of the quantum $D$-module of $\PP^{n-1}$ 
\[
\bigoplus_{j=1}^n \cF_j \colon \QDM(\mathbb{P}^{n-1}) \cong \bigoplus_{j=1}^n \sigma_j^*\QDM(\pt).
\]
This serves as a prototype for the proof of the Decomposition Theorem \ref{thm:projective_bundle_decomposition} for projective bundles. For a general vector bundle $V\to B$, we combine the continuous Fourier transformation presented in this section with the Fourier duality $\QDM_T(V) \cong \QDM(\PP(V))$ that can be deduced from Example \ref{exa:projective_bundle}.  
\end{example}

\begin{example} 
The fixed component $Z\subset W$ in the master space $W$ for blowups has $(\overbrace{-1,\dots,-1}^{\text{$r$ times}},1)$ as normal $T$-weights. Therefore we have $r-1 = |c_Z|$ many continuous Fourier transformations $\cF_Z^j \colon \QDM_T(W) \to \QDM(Z)$.  
\end{example}

\section*{Acknowledgements} 
This paper is dedicated to the memory of Professor Bumsig Kim. The relationship between Fourier transformaion and GIT quotients explored here is closely tied to Professor Kim's work, particularly his contributions to the theory of stable quasimaps and $I$-functions. I had the pleasure of discussing these ideas with him during a meeting in Kioloa in January 2016. 

I thank Yuki Koto, Daniel Pomerleano, Fumihiko Sanda, Yuuji Tanaka, Constantin Teleman for many helpful discussions.  
I warmly thank anonymous referees for their valuable comments. 
This article builds upon lectures presented at Toyama \cite{Iritani:Toyama23}, Banff \cite{Iritani:Banff23}, Hong-Kong \cite{Iritani:HongKong23} and Miami \cite{Iritani:Miami24}. I thank Kohei Iwaki, Olivia Dumitrescu, Motohico Mulase, Naichung Conan Leung, Ludmil Katzarkov for organizing these  conferences, inviting me to speak and for insightful discussion. 
This research is supported by JSPS grant 21H04994 and 23H01073.


\bibliographystyle{plain}
\bibliography{Fourier_analysis}

\begin{thebibliography}{51}
\providecommand{\natexlab}[1]{#1}
\providecommand{\url}[1]{\texttt{#1}}
\expandafter\ifx\csname urlstyle\endcsname\relax
  \providecommand{\doi}[1]{doi: #1}\else
  \providecommand{\doi}{doi: \begingroup \urlstyle{rm}\Url}\fi

\bibitem{Teleman:gauge_mirror}
C.~Teleman.
\newblock Gauge theory and mirror symmetry.
\newblock In \emph{Proceedings of the {I}nternational {C}ongress of
  {M}athematicians---{S}eoul 2014. {V}ol. {II}}, pp. 1309--1332, Kyung Moon Sa,
  Seoul  (2014).

\bibitem{Pomerleano-Teleman:announcement}
D.~Pomerleano and C.~Teleman.
\newblock Quantization commutes with reduction again: quantum {GIT} conjecture
  {I}.
\newblock
  \href{https://arxiv.org/abs/2405.20301}{\texttt{arXiv:2405.20301[math.AG]}}
  (2024).

\bibitem{Givental:homological}
A.~B. Givental, Homological geometry. {I}. {P}rojective hypersurfaces,
  \emph{Selecta Math. (N.S.)}. {\bf 1}\penalty0 (2), \penalty0 325--345
  (1995).
\newblock ISSN 1022-1824.

\bibitem{CLZ:symplectic_cuts}
L.~Cassia, P.~Longhi, and M.~Zabzine, Symplectic cuts and open/closed strings
  {I}, \emph{Comm. Math. Phys.} {\bf 406}\penalty0 (1), \penalty0 Paper No. 15,
  50  (2025).
\newblock ISSN 0010-3616,1432-0916.

\bibitem{Okounkov-Pandharipande:Hilbert}
A.~Okounkov and R.~Pandharipande, The quantum differential equation of the
  {H}ilbert scheme of points in the plane, \emph{Transform. Groups}. {\bf
  15}\penalty0 (4), \penalty0 965--982  (2010).
\newblock ISSN 1083-4362.

\bibitem{BMO:Springer}
A.~Braverman, D.~Maulik, and A.~Okounkov, Quantum cohomology of the {S}pringer
  resolution, \emph{Adv. Math.} {\bf 227}\penalty0 (1), \penalty0 421--458
  (2011).
\newblock ISSN 0001-8708.

\bibitem{Maulik-Okounkov}
D.~Maulik and A.~Okounkov, Quantum groups and quantum cohomology,
  \emph{Ast\'{e}risque}. \penalty0 (408), \penalty0 ix+209  (2019).
\newblock ISSN 0303-1179.
\newblock
  \href{http://arxiv.org/abs/1211.1287}{\texttt{arXiv:1211.1287[math.AG]}}.

\bibitem{Seidel:pi1}
P.~Seidel, {$\pi_1$} of symplectic automorphism groups and invertibles in
  quantum homology rings, \emph{Geom. Funct. Anal.} {\bf 7}\penalty0 (6),
  \penalty0 1046--1095  (1997).
\newblock ISSN 1016-443X.

\bibitem{GMP:nil-Hecke}
E.~Gonz\'{a}lez, C.~Y. Mak, and D.~Pomerleano.
\newblock Affine nil-{H}ecke algebras and quantum cohomology.
\newblock \href{https://arxiv.org/abs/2202.05785}{\texttt{arXiv:2202.05785}}
  (2022).

\bibitem{Woodward:qKirwan}
C.~T. Woodward, Quantum {K}irwan morphism and {G}romov-{W}itten invariants of
  quotients {I, II, III}, \emph{Transform. Groups}. {\bf 20}\penalty0 (2,3,4),
  \penalty0 507--556,881--920,1155--1193  (2015).
\newblock ISSN 1083-4362.
\newblock \href{http://arxiv.org/abs/1204.1765}{\texttt{arXiv:1204.1765}}.

\bibitem{Fukaya:Lagcor}
K.~Fukaya.
\newblock Unobstructed immersed {L}agrangian correspondence and filtered
  {$A_\infty$} functor.
\newblock
  \href{https://arxiv.org/abs/1706.02131}{\texttt{arXiv:1706.02131[math.AG]}}
  (2017).

\bibitem{LLL:Lagcor-Teleman}
S.-C. Lau, N.-C.~C. Leung, and Y.-L.~L. Li.
\newblock Equivariant {L}agrangian correspondence and a conjecture of
  {T}eleman.
\newblock
  \href{https://arxiv.org/abs/2312.13926}{\texttt{arXiv:2312.13926[math.AG]}}
  (2023).

\bibitem{Iritani:shift}
H.~Iritani, Shift operators and toric mirror theorem, \emph{Geom. Topol.} {\bf
  21}\penalty0 (1), \penalty0 315--343  (2017).
\newblock ISSN 1465-3060.
\newblock \href{http://arxiv.org/abs/1411.6840}{\texttt{arXiv:1411.6840
  [math.AG]}}.

\bibitem{Kirwan:coh_quotient}
F.~C. Kirwan, \emph{Cohomology of quotients in symplectic and algebraic
  geometry}. vol.~31, \emph{Mathematical Notes}, Princeton University Press,
  Princeton, NJ  (1984).
\newblock ISBN 0-691-08370-3.

\bibitem{Jeffrey-Kirwan:localization}
L.~C. Jeffrey and F.~C. Kirwan, Localization for nonabelian group actions,
  \emph{Topology}. {\bf 34}\penalty0 (2), \penalty0 291--327  (1995).
\newblock ISSN 0040-9383.

\bibitem{Galkin-Iritani}
S.~Galkin and H.~Iritani.
\newblock Gamma conjecture via mirror symmetry.
\newblock In \emph{Primitive forms and related subjects---{K}avli {IPMU} 2014},
  vol.~83, \emph{Adv. Stud. Pure Math.}, pp. 55--115. Math. Soc. Japan, [Tokyo]
   ([2019] \copyright 2019).
\newblock ISBN 978-4-86497-085-3.

\bibitem{Barannikov:quantum_periods}
S.~Barannikov, {Quantum periods, I: semi-infinite variations of Hodge
  structures}, \emph{International Mathematics Research Notices}. {\bf
  2001}\penalty0 (23), \penalty0 1243--1264  (01, 2001).
\newblock ISSN 1073-7928.
\newblock \doi{10.1155/S1073792801000599}.
\newblock URL \url{https://doi.org/10.1155/S1073792801000599}.

\bibitem{CCGGK:MS_Fano}
T.~Coates, A.~Corti, S.~Galkin, V.~Golyshev, and A.~Kasprzyk.
\newblock Mirror symmetry and {F}ano manifolds.
\newblock In \emph{European {C}ongress of {M}athematics}, pp. 285--300. Eur.
  Math. Soc., Z\"{u}rich  (2013).
\newblock ISBN 978-3-03719-120-0.

\bibitem{Iritani:integral}
H.~Iritani, An integral structure in quantum cohomology and mirror symmetry for
  toric orbifolds, \emph{Adv. Math.} {\bf 222}\penalty0 (3), \penalty0
  1016--1079  (2009).
\newblock ISSN 0001-8708.

\bibitem{AFW:MS-Gamma-Fourier}
K.~Aleshkin, B.~Fang, and J.~Wang.
\newblock Mirror symmetric gamma conjecture for toric {GIT} quotients via
  {F}ourier transform.
\newblock
  \href{https://arxiv.org/abs/2501.14222}{\texttt{arXiv:2501.14222[math.AG]}}
  (2025).

\bibitem{GGI:gammagrass}
S.~Galkin, V.~Golyshev, and H.~Iritani, Gamma classes and quantum cohomology of
  {F}ano manifolds: gamma conjectures, \emph{Duke Math. J.} {\bf 165}\penalty0
  (11), \penalty0 2005--2077  (2016).
\newblock ISSN 0012-7094.
\newblock \doi{10.1215/00127094-3476593}.
\newblock URL \url{https://doi.org/10.1215/00127094-3476593}.

\bibitem{Givental:toric_mirrorthm}
A.~B. Givental.
\newblock A mirror theorem for toric complete intersections.
\newblock In \emph{Topological field theory, primitive forms and related topics
  ({K}yoto, 1996)}, vol. 160, \emph{Progr. Math.}, pp. 141--175. Birkh\"auser
  Boston, Boston, MA  (1998).

\bibitem{Iritani-Sanda:reduction}
H.~Iritani and F.~Sanda in preparation.

\bibitem{Iritani:monoidal}
H.~Iritani.
\newblock Quantum cohomology of blowups.
\newblock
  \href{https://arxiv.org/abs/2307.13555}{\texttt{arXiv:2307.13555[math.AG]}}
  (2023).

\bibitem{Cox-Katz}
D.~A. Cox and S.~Katz, \emph{Mirror symmetry and algebraic geometry}. vol.~68,
  \emph{Mathematical Surveys and Monographs}, American Mathematical Society,
  Providence, RI  (1999).
\newblock ISBN 0-8218-1059-6.

\bibitem{Givental:equivariant}
A.~B. Givental, Equivariant {G}romov-{W}itten invariants, \emph{Internat. Math.
  Res. Notices}. \penalty0 (13), \penalty0 613--663  (1996).
\newblock ISSN 1073-7928.

\bibitem{McDuff-Tolman}
D.~McDuff and S.~Tolman, Topological properties of {H}amiltonian circle
  actions, \emph{IMRP Int. Math. Res. Pap.} pp. 72826, 1--77  (2006).
\newblock ISSN 1687-3017.

\bibitem{Dubrovin:2D}
B.~Dubrovin, Geometry of {$2$}{D} topological field theories. {\bf 1620},
  \penalty0 120--348  (1996).

\bibitem{Graber-Pandharipande}
T.~Graber and R.~Pandharipande, Localization of virtual classes, \emph{Invent.
  Math.} {\bf 135}\penalty0 (2), \penalty0 487--518  (1999).
\newblock ISSN 0020-9910.

\bibitem{Pandharipande:afterGivental}
R.~Pandharipande, Rational curves on hypersurfaces (after {A}. {G}ivental),
  \emph{Ast\'erisque}. \penalty0 (252), \penalty0 Exp.\ No.\ 848, 5, 307--340
  (1998).
\newblock ISSN 0303-1179.
\newblock S{\'e}minaire Bourbaki. Vol. 1997/98.

\bibitem{CCIT:MS}
T.~Coates, A.~Corti, H.~Iritani, and H.-H. Tseng, Hodge-theoretic mirror
  symmetry for toric stacks, \emph{J. Differential Geom.} {\bf 114}\penalty0
  (1), \penalty0 41--115  (2020).
\newblock ISSN 0022-040X.

\bibitem{Coates-Givental}
T.~Coates and A.~Givental, Quantum {R}iemann-{R}och, {L}efschetz and {S}erre,
  \emph{Ann. of Math. (2)}. {\bf 165}\penalty0 (1), \penalty0 15--53  (2007).
\newblock ISSN 0003-486X.

\bibitem{Givental:symplectic}
A.~B. Givental.
\newblock Symplectic geometry of {F}robenius structures.
\newblock In \emph{Frobenius manifolds}, Aspects Math., E36, pp. 91--112.
  Friedr. Vieweg, Wiesbaden  (2004).

\bibitem{CCIT:computing}
T.~Coates, A.~Corti, H.~Iritani, and H.-H. Tseng, Computing genus-zero twisted
  {G}romov-{W}itten invariants, \emph{Duke Math. J.} {\bf 147}\penalty0 (3),
  \penalty0 377--438  (2009).
\newblock ISSN 0012-7094.

\bibitem{Dolgachev-Hu:VGIT}
I.~V. Dolgachev and Y.~Hu, Variation of geometric invariant theory quotients,
  \emph{Inst. Hautes \'{E}tudes Sci. Publ. Math.} \penalty0 (87), \penalty0
  5--56  (1998).
\newblock ISSN 0073-8301.
\newblock With an appendix by Nicolas Ressayre.

\bibitem{Thaddeus:GIT}
M.~Thaddeus, Geometric invariant theory and flips, \emph{J. Amer. Math. Soc.}
  {\bf 9}\penalty0 (3), \penalty0 691--723  (1996).
\newblock ISSN 0894-0347.

\bibitem{SaitoK:primitiveform}
K.~Saito, Period mapping associated to a primitive form, \emph{Publ. Res. Inst.
  Math. Sci.} {\bf 19}\penalty0 (3), \penalty0 1231--1264  (1983).
\newblock ISSN 0034-5318.
\newblock \doi{10.2977/prims/1195182028}.
\newblock URL \url{http://dx.doi.org/10.2977/prims/1195182028}.

\bibitem{Iritani-Koto:projective_bundle}
H.~Iritani and Y.~Koto, Quantum cohomology of projective bundles  (2023).
\newblock \href{https://arxiv.org/abs/2307.03696}{\texttt{arXiv:2307.03696}}.

\bibitem{Brown:toric_fibration}
J.~S. Brown, Gromov-{W}itten invariants of toric fibrations, \emph{Int. Math.
  Res. Not. IMRN}. \penalty0 (19), \penalty0 5437--5482  (2014).
\newblock ISSN 1073-7928.

\bibitem{Koto:non-split_toric}
Y.~Koto, A mirror theorem for non-split toric bundles  (2023).
\newblock
  \href{https://arxiv.org/abs/2310.09888}{\texttt{arXiv:2310.09888[math.AG]}}.

\bibitem{GHIKLS:counterexamples}
S.~Galkin, J.~Hu, H.~Iritani, K.~Huazhong, C.~Li, and Z.~Su.
\newblock Revisiting {G}amma conjecture {I}: counterexamples and modifications.
\newblock
  \href{https://arxiv.org/abs/2405.16979}{\texttt{arXiv:2405.16979[math.AG]}}
  (2024).

\bibitem{Iritani:discrepant}
H.~Iritani, Global mirrors and discrepant transformations for toric
  {D}eligne-{M}umford stacks, \emph{SIGMA Symmetry Integrability Geom. Methods
  Appl.} {\bf 16}, \penalty0 Paper No. 032, 111  (2020).

\bibitem{Iritani:ICM}
H.~Iritani.
\newblock Gamma classes and quantum cohomology.
\newblock In \emph{I{CM}---{I}nternational {C}ongress of {M}athematicians.
  {V}ol. 4. {S}ections 5--8}, pp. 2552--2574. EMS Press, Berlin  ([2023]
  \copyright 2023).

\bibitem{Kontsevich:IHES2023}
M.~Kontsevich, Birational invariants from {G}romov-{W}itten theory  (2022).
\newblock Lecture Series at Institut des Hautes \'{E}tudes Scientifiques, 2023,
  videos available at
  \url{https://www.youtube.com/watch?v=YQzltmJ0hN8&list=PLx5f8IelFRgHd0gjoolTDXakXQvLbVQPv}.

\bibitem{HYZZ:framing}
T.~Hinault, T.~Y. Yu, C.~Zhang, and S.~Zhang.
\newblock Decomposition and framing of {$F$}-bundles and applications to
  quantum cohomology.
\newblock
  \href{https://arxiv.org/abs/2411.02266}{\texttt{arXiv:2411.02266[math.AG]}}
  (2024).

\bibitem{Fan-Lee:projective_bundle}
H.~Fan and Y.-P. Lee, On {G}romov-{W}itten theory of projective bundles,
  \emph{Michigan Math. J.} {\bf 69}\penalty0 (1), \penalty0 153--178  (2020).
\newblock ISSN 0026-2285.

\bibitem{Hori-Romo}
K.~Hori and M.~Romo.
\newblock Exact {R}esults {I}n {T}wo-{D}imensional $(2,2)$ {S}upersymmetric
  {G}auge {T}heories {W}ith {B}oundary.
\newblock
  \href{https://arxiv.org/abs/1308.2438}{\texttt{arXiv:1308.2438[hep-th]}}
  (2013).

\bibitem{Iritani:Toyama23}
H.~Iritani.
\newblock Quantum cohomology and {F}ourier transformation.
\newblock Lecture series at Kansu-Hoteishiki-Ron Summer Seminar on 7-8 August
  2023, held at Kurobe, Toyama, Japan  (2023).

\bibitem{Iritani:Banff23}
H.~Iritani.
\newblock Fourier analysis of equivariant quantum cohomology.
\newblock Talk at Banff International Research Station on 19 October 2023, a
  video available at
  \url{https://www.birs.ca/events/2023/5-day-workshops/23w5068/videos/watch/202310191401-Iritani.html}
   (2023).

\bibitem{Iritani:HongKong23}
H.~Iritani.
\newblock Fourier analysis of equivariant quantum cohomology.
\newblock Lecture series at the Chinese University of Hong-Kong, on 21-24
  November 2023, Greater Bay Area Geometry and Mathematical Physics Conference
  - MIST 2023 Workshop  (2023).

\bibitem{Iritani:Miami24}
H.~Iritani.
\newblock Fourier analysis of equivariant quantum cohomology.
\newblock Lecture series at the Institute of the Mathematical Sciences of the
  Americas at the University of Miami, on 26-28 January 2024, Homological
  Mirror Symmetry, videos available at
  \url{https://www.youtube.com/watch?v=CaBr8DcKkCo},
  \url{https://www.youtube.com/watch?v=vCHuub4CKEQ},
  \url{https://www.youtube.com/watch?v=OnmSk4L6-KU}  (2024).

\end{thebibliography}

\end{document}